\newcommand{\Cb}{\mathbb{C}}
\newcommand{\Ib}{\mathbb{I}}
\newcommand{\Nb}{\mathbb{N}}
\newcommand{\Rb}{\mathbb{R}}
\newcommand{\Tb}{\mathbb{T}}
\newcommand{\Zb}{\mathbb{Z}}
\newcommand{\Xb}{\textbf{\upshape X}}
\newcommand{\Ac}{\mathcal{A}}
\newcommand{\Bc}{\mathcal{B}}
\newcommand{\Fc}{\mathcal{F}}
\newcommand{\Kc}{\mathcal{K}}
\newcommand{\Lc}{\mathcal{L}}
\newcommand{\Mc}{\mathcal{M}}
\newcommand{\Pc}{\mathcal{P}}
\newcommand{\Qc}{\mathcal{Q}}
\newcommand{\Uc}{\mathcal{U}}
\newcommand{\Wc}{\mathcal{W}}
\newcommand{\Alp}{\mathcal{A}_{p}}
\newcommand{\Rh}{\hat{R}}
\newcommand{\ii}{{\bf i}}
\DeclareMathOperator{\coker}{coker}
\DeclareMathOperator{\diag}{diag}
\DeclareMathOperator{\im}{im}
\DeclareMathOperator{\ind}{ind}
\DeclareMathOperator{\op}{op}
\DeclareMathOperator{\plim}{\Pc-lim}
\DeclareMathOperator{\rk}{rank}
\DeclareMathOperator{\plimn}{\underset{n\to\infty}{\Pc-lim \,}}
\providecommand{\lb}[1]{\Lc(#1)}
\providecommand{\lc}[1]{\Kc(#1)}
\providecommand{\pb}[1]{\Lc(#1,\Pc)}
\providecommand{\pc}[1]{\Kc(#1,\Pc)}
\newtheorem{thm}{Theorem}
\newtheorem{lem}[thm]{Lemma}
\newtheorem{prop}[thm]{Proposition}
\newtheorem{cor}[thm]{Corollary}
\theoremstyle{definition}
\newtheorem{defn}[thm]{Definition}
\newtheorem{ex}[thm]{Example}
\begin{document}

\title{Fredholm theory for band-dominated\\ and related operators: a survey}
\author{Markus Seidel}
\maketitle

\begin{abstract}
This paper presents the Fredholm theory on $l^p$-spaces for band-dominated operators 
and important subclasses, such as operators in the Wiener algebra. It particularly 
closes several gaps in the previously known results for the case $p=\infty$ and
addresses the open questions raised by Chandler-Wilde and Lindner \cite{LiChW}.
The main tools are provided by the limit operator method and an algebraic framework 
for the description and adaption of Fredholmness and convergence.
A comprehensive overview of this approach is given.

\medskip
\textbf{Keywords:} Fredholm theory; Limit operator; Band-dominated operator.

\medskip
\textbf{AMS subject classification:}  47A53; 47B07; 46E40; 47B36; 47L80.
\end{abstract}

\

\section{Introduction}

During the last years a far reaching theory on band-dominated operators grew up
and revealed deep results concerning their Fredholm property, their spectral properties, 
the applicability (and further side effects) of the finite section method, and was 
successfully applied to several more concrete subclasses of operators having 
more advanced properties. Among them one can find Toeplitz, Hankel or Jacobi operators, 
hence also discrete Schroedinger operators.

Besides, the wonderful concept of operator algebras arising from approximate projections 
and the limit operator method were developed. Today one may say that large parts of 
this theory are very well understood and finalized. However, there are some results 
which still require some additional assumptions that seem to be redundant but could 
not be removed completely yet. One prominent example is the need for a predual setting 
in case of band-dominated operators on $l^\infty$-spaces in the works of Lindner. A 
collection of eight open problems was stated by Chandler-Wilde and Lindner in the 
final chapter of \cite{LiChW}.

The aim of the present text is to give an overview on the latest state of the art 
and to close several gaps in the theory on band-dominated operators. We show that 
a couple of results being known for band-dominated operators actually hold in a more 
general context, and we try to clear up some possible generalizations which have 
already been indicated in the literature. We particularly contribute to seven 
of the eight open questions stated in \cite{LiChW} answering four of them completely. 

\subsection{Band-dominated operators}
Let us start introducing the basic notions and the precise description of two of 
the mentioned (and actually redundant) conditions \eqref{CPrae} and \eqref{CHyper}.

\subparagraph{Sequence spaces}
Given $N\in\Nb$, a Banach space $X$, and the parameter $1\leq p < \infty$ we 
let $l^p=l^p(\Zb^N,X)$ denote the space of all functions $x:\Zb^N\to X$ with the 
property 
\[\|x\|_p:=\left(\sum_{i\in\Zb^N}\|x(i)\|^p\right)^{1/p}<\infty.\] 
Provided with the norm $\|x\|_p$, $l^p$ becomes a Banach space.
It is convenient to refer to such functions as sequences $(x_i)_{i\in\Zb^N}$ with 
$x_i=x(i)$. The above condition then simply means that the sequences shall be 
$p$-summable.
Analogously, one introduces the respective Banach spaces $l^\infty=l^\infty(\Zb^N,X)$ 
of all bounded sequences $x=(x_i)$ of elements $x_i\in X$ with the norm
$\|\cdot\|_\infty$ defined by 
\[\|x\|_\infty:=\sup\{\|x_i\|:i\in\Zb^N\}\] 
and, finally, its closed subspace $l^0=l^0(\Zb^N,X)$ consisting of all bounded 
sequences $(x_i)$ with $\|x_i\|\to 0$ as $|i|\to\infty$.

Note that, choosing $X=L^p((0,1)^N)$, $l^p(\Zb^N,X)$ is isometrically isomorphic to
$L^p(\Rb^N)$. Hence, all subsequent definitions and results for the ``discrete'' 
$l^p$-cases have their ``continuous'' $L^p$-counter\-parts. This is a well known and
frequently utilized observation (see e.g. 
\cite{RaConv, RaRoSiL2, LiEssBdd, LpInd, LimOps, Marko, LiChW}), and we will not 
go into further details here.

\subparagraph{The operators}
Every sequence $a=(a_i)\in l^\infty(\Zb^N,\lb{X})$, where $\lb{X}$ denotes the 
Banach algebra of all bounded linear operators on $X$, gives rise to a bounded 
linear operator $aI$ of multiplication on each of the spaces $l^p$ by 
$(x_i)\mapsto(a_ix_i)$. 

Another basic family of operators in $\lb{X}$ is given by the shifts
\[V_\alpha :l^p\to l^p,\quad (x_i)\mapsto(x_{i-\alpha}) \quad\quad(\alpha\in\Zb^N).\]

This is everything we need for the definition of band-dominated operators:
\begin{defn}
A finite sum of the form $\sum_\alpha a_\alpha V_\alpha$ is called a band operator. 
The limits of sequences of band operators, taken w.r.t. the operator norm, are said 
to be band-dominated, and the set of all band-dominated operators on $l^p$ shall 
be denoted by $\Alp$.
\end{defn}
Clearly, band operators form a (non-closed) algebra $\Bc$ of bounded linear operators 
on each of the spaces $l^p$, $p\in\{0\}\cup[1,\infty]$. Thus, $\Alp$ is a Banach algebra
for every $p$. Note that $\Bc$ is independent of the choice of the parameter $p$, whereas 
$\Alp$ depends on $p$. 

For a nice introduction and a comprehensive discussion we refer to the work of 
Lindner (e.g. \cite{Marko}, \cite{LiChW}) and the book of Rabinovich, Roch and 
Silbermann \cite{LimOps}.

\subsection{The Wiener algebra}
A prominent algebra which is somehow between $\Bc$ and $\Alp$ and which provides 
a remarkable Fredholm behavior is the so-called Wiener algebra $\Wc$:
For band operators $A=\sum_\alpha a_\alpha V_\alpha$ set 
\[\|A\|_{\Wc}:=\sum_\alpha\|a_\alpha\|_\infty\]
and notice that $\|\cdot\|_{\Alp}\leq \|\cdot\|_{\Wc}$.
Now take $\Wc$ as the closure of $\Bc$ with respect to $\|\cdot\|_\Wc$. Equipped 
with $\|\cdot\|_\Wc$ as the norm, $\Wc$ actually turns into a Banach algebra which 
is a subset of each $\Alp$, $p\in\{0\}\cup[1,\infty]$. To see that it is even a 
proper subset, check that e.g. $B(x_i):=((|i|+1)^{-1}x_{-i})$ defines an operator 
$B\in\Alp\setminus\Wc$.

Note that $\Wc$ can be regarded as a natural (non-stationary) extension of the classical 
algebra of all (Laurent) operators with constant diagonals and the norm 
$\|\cdot\|_{\Wc}$, which is isomorphic to the Wiener algebra of all periodic functions 
with absolutely summable sequence of Fourier coefficients. For further details we 
refer to \cite[Section 1.3.6]{Marko}. 

In \cite{MarkoWiener} Lindner proved that the Fredholm properties of the operators 
in the Wiener algebra are independent of the underlying space. By this he extended a
series of predecessors in \cite{LangeR, RochFredTh, LimOps, Rochlp, LpInd}. 
Actually, for the proofs in \cite{MarkoWiener} two additional restrictions were still 
required:
\begin{equation}\label{CPrae}
\begin{minipage}{10cm}
Say that for $A\in\lb{l^\infty(\Zb^N,X)}$ there exists a \textbf{predual setting} 
if there is a Banach space $Y$ and an operator $B\in\lb{l^1(\Zb^N,Y)}$ such that 
such that $Y^*=X$ and $B^*=A$.
\end{minipage}
\end{equation}
\begin{equation}\label{CHyper}
\begin{minipage}{10cm}
Say that the Banach space $X$ has the \textbf{hyperplane property} if it is
isomorphic to one of its subspaces of co-dimension one.
Equivalently, $X$ has the hyperplane property if there is a $B\in\lb{X}$ 
of Fredholm index one.
\end{minipage}
\end{equation}
Now, we can state Lindners result as it appears in \cite{MarkoWiener} and 
\cite[Theorem 6.44]{LiChW}:
\begin{thm}\label{TWiener}
Suppose that $X$ is finite-dimensional or has the hyperplane property, and that 
$A\in\Wc$. Then
\begin{enumerate}
\item[(a)] If $A$ is Fredholm on one of the spaces $l^p$ with $p\in\{0\}\cup[1,\infty)$, 
					 then $A$ is Fredholm on all spaces $l^p$, $p\in\{0\}\cup[1,\infty]$.
\item[(b)] If for $A$ considered as acting on $l^\infty$ there exists a predual setting,
					 then $A$ is Fredholm on one of the spaces $l^p$ if and only if $A$ is 
					 Fredholm on all spaces $l^p$, $p\in\{0\}\cup[1,\infty]$.
\end{enumerate}
If $A$ is Fredholm on every space $l^p$ then the index is the same on all these spaces.
\end{thm}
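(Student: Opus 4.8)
The plan is to route everything through the limit operator method and the inverse closedness of $\Wc$. First I would set up the Fredholm criterion: for $A\in\Wc\subseteq\Alp$ the operator $A$ is Fredholm on $l^p$ precisely when all of its limit operators are invertible, with a uniform bound on the norms of the inverses. For operators in $\Wc$ the limit operators arise as limits (in an appropriate entrywise sense) of the shifted coefficient operators $V_{-h}a_\alpha V_h$ taken along sequences $h\to\infty$ in $\Zb^N$; the resulting operators are again of Wiener type and, crucially, are read off from the coefficients $a_\alpha$ alone. Hence the set of limit operators of $A$ is literally the same for every $p$, and since every operator in $\Wc$ is automatically rich, the criterion applies without extra hypotheses for $p\in\{0\}\cup[1,\infty)$.

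Next I would invoke the Wiener-type inverse closedness of $\Wc$: if an element of $\Wc$ is invertible on one $l^p$ then its inverse again lies in $\Wc$, so it is invertible on every $l^q$ at once, with the inverse bounded in $\|\cdot\|_\Wc$. Applying this to the limit operators, the condition ``all limit operators are invertible'' becomes manifestly $p$-independent; moreover the $\Wc$-bounds dominate each $\Alp$-norm and thereby supply exactly the uniform control of inverses that the criterion demands. Combining the two steps, Fredholmness of $A\in\Wc$ is the same on all $l^p$ with $p\in\{0\}\cup[1,\infty)$.

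The delicate point, and the reason for the hypotheses, is the endpoint $p=\infty$, where the canonical finite-support projections do not converge strongly to the identity and the easy half of the limit operator criterion fails. For part~(a) I would use the hyperplane property (or finite-dimensionality) of $X$ to transport the Fredholm conclusion from a space with $p<\infty$ up to $l^\infty$; for part~(b) I would instead exploit the predual setting $A=B^*$ with $B\in\lb{l^1(\Zb^N,Y)}$ and pass to the adjoint, reading Fredholmness of $A$ on $l^\infty$ off from that of $B$ on $l^1$, which is already covered. I expect this endpoint to be the main obstacle.

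Finally, for the index I would show that the kernel and cokernel of a Fredholm $A\in\Wc$ do not depend on $p$. Because all limit operators are invertible, any solution of $Ax=0$ must vanish at infinity, and inverse closedness forces such solutions to decay rapidly enough to lie in every $l^q$; hence $\ker A$ is one and the same finite-dimensional space of sequences for all $p$. The cokernel I would treat by passing to the formal transpose $A^{\mathrm T}=\sum_\alpha a_\alpha^{\mathrm T}V_{-\alpha}$, which again lies in $\Wc$ and is Fredholm, identifying $\coker_{l^p}(A)$ with a kernel of $A^{\mathrm T}$ on the dual scale and applying the same regularity. Both $\dim\ker$ and $\dim\coker$ are then $p$-independent, so $\ind A$ is the same on all the spaces. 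As a backup for the endpoint, one can also use that $\ind$ is locally constant and that $\|\cdot\|_\Wc$-convergence forces $\Alp$-convergence simultaneously for every $p$, approximate $A$ by band operators, and track the index; securing the index at $p=\infty$ is, again, the subtle final piece.
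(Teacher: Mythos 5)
Your proposal rests on the claim that, for $A\in\Wc$, Fredholmness on $l^p$ is \emph{equivalent} to uniform invertibility of the operator spectrum. That criterion (Theorem \ref{TBdORichHS}) characterizes $\Pc$-Fredholmness, i.e.\ invertibility at infinity, \emph{not} classical Fredholmness, and the two notions genuinely differ whenever $\dim X=\infty$ --- a case the theorem explicitly includes, since the hyperplane property is a hypothesis designed for infinite-dimensional $X$. Concretely, $Q_1=I-P_1$ lies in $\Wc$, is rich, and has operator spectrum $\{I\}$, hence a uniformly invertible spectrum; yet it is not Fredholm on any $l^p$ when $\dim X=\infty$, because $\ker Q_1=\im P_1$ is infinite dimensional. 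So the equivalence you start from would prove false statements, and, contrary to your assessment, the obstacle is not confined to the endpoint $p=\infty$: the gap between Fredholmness and invertibility at infinity is present for \emph{every} $p$ once $\dim X=\infty$. Two further ingredients are also false or unjustified: not every operator in $\Wc$ is rich (that is a speciality of $\dim X<\infty$; for general $X$ one only knows richness is $p$-independent), and the assertion that kernel elements ``decay rapidly enough to lie in every $l^q$'' by inverse closedness is unsupported --- from $x=-T_2x$ with $T_2\in\pc{\Xb}$ one gets only $\|Q_nx\|\to 0$, which is far from $p$-summability.

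What is missing is the mechanism that actually transfers \emph{classical} Fredholmness across the scale of spaces: perturbation by operators of finite rank supported on finitely many coordinates, which are compact on every $l^p$ simultaneously. This is how the paper proves the stronger Theorem \ref{TWiener2} (which contains the statement above): reduce to index zero by multiplying with an explicit one-sided invertible $S_{-\kappa}\in\Wc$ from Lemma \ref{LHyper} (built from a rank-one projection on $X$ --- precisely the device that makes the hyperplane property redundant); use Corollary \ref{CFredh} to write the index-zero operator as invertible plus a finite-rank $K\in\pc{l^p}$; approximate $K$ by $P_mKP_m\in\Wc\cap\pc{l^p}$ and invoke inverse closedness of $\Wc$ to obtain identities $I=AC-T$ and $I=C'A-T'$ with $C,C'\in\Wc$ and $T,T'$ of finite rank on every $l^r$; Fredholmness and (via Atkinson's theorem) the index then transfer to all spaces at once. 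Limit operators never enter that argument. If you insist on the limit-operator route under the stated hypotheses, you can use the predual setting to get ``Fredholm on $l^\infty$ $\Rightarrow$ invertible at infinity on $l^\infty$'' and the hyperplane property to build an index-reducing operator, but you still need a finite-rank perturbation step of the above kind at the end; uniform invertibility of the operator spectrum alone can never return you to classical Fredholmness when $\dim X=\infty$.
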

The Open Problems No. 4 and 5 in \cite{LiChW} ask whether the existence of a 
predual setting or the hyperplane property are redundant, and we will answer both 
questions affirmatively within this text. This particularly simplifies Theorem 
\ref{TWiener} as follows:
\begin{thm}\label{TWiener2}
An operator $A\in\Wc$ is Fredholm on one of the spaces $l^p$ if and only if it is
Fredholm on all the spaces $l^p$, $p\in\{0\}\cup[1,\infty]$. In this case the index 
is the same on all the spaces.
\end{thm}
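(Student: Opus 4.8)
The plan is to fix a convenient reference exponent (say $p=2$) and to prove the two assertions separately: first that Fredholmness does not depend on $p$, and then that the index does not depend on $p$. Both parts will rest on the limit operator method, which---thanks to the predual-free $\Pc$-theory set up in the earlier sections---is available uniformly for every $p\in\{0\}\cup[1,\infty]$, and on the fact that $\Wc$ is a Banach algebra whose norm $\|\cdot\|_{\Wc}$ dominates all the operator norms $\|\cdot\|_{\Alp}$ simultaneously. The latter means that a single sequence of band operators $A_n\to A$ in $\Wc$ approximates $A$ on all spaces $l^p$ at once, so that Fredholmness and index of $A$ are stable under $\Wc$-perturbations on every space simultaneously. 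This is not available for general $A\in\Alp$ and is the true source of the rigidity of the Wiener algebra.

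For the Fredholm property I would argue via limit operators. For $A\in\Wc$, Fredholmness on $l^p$ is equivalent to the uniform invertibility of the whole family of limit operators of $A$. The decisive point is that each limit operator of $A$ is again an element of $\Wc$ with $\|\cdot\|_{\Wc}$ controlled by $\|A\|_{\Wc}$, since limit operators are obtained by shifting and passing to entrywise $\Pc$-limits of the diagonals $a_\alpha$, operations under which absolute summability of the diagonals is preserved. Hence the limit operators live in the Banach algebra $\Wc$, and a non-commutative Wiener lemma---the inverse closedness of $\Wc$ inside each $\Alp$---shows that invertibility of a given element of $\Wc$ on one space $l^p$ forces invertibility, with inverse again in $\Wc$, on all of them. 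Consequently the defining condition ``all limit operators are invertible'' is one and the same condition for every $p$, and Fredholmness of $A$ transfers across the whole scale $p\in\{0\}\cup[1,\infty]$.

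For the index I would exploit that Fredholmness produces decay. If $A\in\Wc$ is Fredholm then, by invertibility of the limit operators, every bounded solution of $Ax=0$ decays fast enough to lie in $\bigcap_{p}l^p$; therefore the null space $\ker A$ is literally the same finite-dimensional subspace on every $l^p$, so $\dim\ker A$ is $p$-independent. Applying the same reasoning to the formal transpose $A^{T}$, which again belongs to $\Wc$ and is Fredholm whenever $A$ is, one obtains a $p$-independent kernel for $A^{T}$; identifying $\coker A$ on $l^p$ with the annihilator of $\im A$, i.e.\ with $\ker A^{T}$ on $l^{p'}$ via the natural duality, then shows that $\dim\coker A$ is $p$-independent as well. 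On the reflexive range $p\in(1,\infty)$ this can be cross-checked by complex interpolation, along which the index of a Fredholm operator is constant.

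The hard part will be the two endpoints $p=0$ and $p=\infty$, which is exactly where Theorem \ref{TWiener} had to invoke the predual setting and the hyperplane property. At $p=\infty$ the weak-$*$ compactness arguments that furnish limit operators are no longer automatic, and the duality used to pass from $\coker A$ to $\ker A^{T}$ degenerates because $l^\infty$ is not the dual of a reflexive space; the role of \eqref{CPrae} in Theorem \ref{TWiener} was precisely to repair this. The plan is to bypass both difficulties by working inside the intersection space $\bigcap_{p}l^p$ identified above: since Fredholmness confines the kernels and cokernels of $A$ and $A^{T}$ to decaying (co)vectors, the index computation can be carried out there once and for all, with no reference to a predual and with no appeal to a Fredholm operator of index one on $X$. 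Making the predual-free limit operator characterization at $p\in\{0,\infty\}$ rigorous, and verifying that the decay estimates for the cokernel are uniform enough to survive the passage to these endpoints, is where the main technical effort lies.
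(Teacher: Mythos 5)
Your argument for the $p$-independence of the Fredholm property rests on the claim that, for $A\in\Wc$, ``Fredholmness on $l^p$ is equivalent to the uniform invertibility of the whole family of limit operators of $A$.'' This conflates two different notions and fails precisely in the situation the theorem is designed to cover, namely $\dim X=\infty$. The limit operator criterion (Theorem \ref{TBdORichHS}, and its Wiener-algebra version Theorem \ref{TPWiener}) characterizes \emph{$\Pc$-Fredholmness} (invertibility at infinity), it applies only to \emph{rich} operators, and richness is not automatic for $A\in\Wc$ when $\dim X=\infty$. Moreover, classical Fredholmness is strictly stronger than $\Pc$-Fredholmness in that case: the operator $Q_1=I-P_1$ lies in $\Wc$, is rich, has operator spectrum $\{I\}$ (uniformly invertible), hence is $\Pc$-Fredholm, yet it is not Fredholm on any $l^p$ because its kernel $\im P_1$ is infinite dimensional. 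Your argument would conclude that $Q_1$ is Fredholm. So the first half of your proof establishes (a variant of) Theorem \ref{TPWiener}, not Theorem \ref{TWiener2}; the passage from ``$\Pc$-Fredholm'' to ``Fredholm'' is exactly the gap that the paper's proof must, and does, bridge via Corollary \ref{CFredh}.

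The index half has a related gap. It is true that $\ker A$ on $l^\infty$ lies in $\bigcap_p l^p$ when $A\in\Wc$ is Fredholm, but this does not follow from ``invertibility of the limit operators'': the natural argument --- take a $\Pc$-regularizer $B$, so that $x=Kx$ with $K=I-BA\in\pc{\Xb}$ for every $x\in\ker A$ --- only yields $\|Q_nx\|\to 0$, i.e. $x\in l^0$, not summability; a diagonal operator $K$ with entries $1/\log(2+|i|)$ shows that a $\Pc$-compact Wiener operator need not map $l^\infty$ into $l^1$. In the paper this decay is a \emph{consequence} of the construction you are trying to avoid: one reduces to index zero by multiplying with the explicit operator $S_{-\kappa}\in\Wc$ of Lemma \ref{LHyper} (this replaces the hyperplane property), applies Corollary \ref{CFredh} to write $AS_{-\kappa}$ as an invertible operator plus a finite rank $\Pc$-compact operator $K$, replaces $K$ by $P_mKP_m\in\Wc$, and uses inverse closedness of $\Wc$ to obtain identities $I=AC-T$ and $I=C'A-T'$ with $C,C'\in\Wc$ and $T,T'$ of finite rank \emph{on every} $l^r$ (their ranges are images, under a Wiener-algebra inverse, of compactly supported sequences --- which is where $\ker A\subset\bigcap_p l^p$ really comes from). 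These identities give Fredholmness and equality of the indices on all spaces at once, with no duality and no interpolation. The items you defer as ``the main technical effort'' --- the endpoint cases $p\in\{0,\infty\}$ and the cokernel there --- are precisely the content of the theorem, so as it stands the proposal is circular where it is not incorrect.
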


Actually, the need for a predual setting played an important role not only for the 
treatment of operators in the Wiener algebra but 
it affected the whole general theory on Fredholmness and limit operators 
which we are going to discuss within this paper.

\subsection{Approximate projections}
One of the most fruitful investigations for the development of the theory of 
band-dominated operators is the concept of approximate projections $\Pc=(P_n)$ and the 
substitution of the classical triple (compactness, Fredholmness, strong convergence) 
by ($\Pc$-compactness, $\Pc$-Fredholmness, $\Pc$-strong convergence).
This idea has its roots in the work of Simonenko on operators of local type \cite{Sim},
grew up in the work of Roch and Silbermann \cite{NonStrongly}, \cite{LimOps} and
became an indispensable tool in the Fredholm theory of band-dominated operators
(see e.g. \cite{LimOps}, \cite{Marko} and numerous papers which led these monographs or 
followed them). Unfortunately, also in this concept there had been open 
questions (e.g. on the connections between $\Pc$-Fredholmness, invertibility at 
infinity \footnote{The definitions follow below.} and the usual Fredholm property) 
which caused some gaps and required additional conditions in some results and their 
applications like the above mentioned condition \eqref{CPrae} on the existence of a 
predual setting. Recently, these problems could be solved \cite{SeSi3}, and in what 
follows we give an overview on the latest state of the art.

\medskip
Whenever the situation is unambiguous we abbreviate the spaces of interest
$l^p(\Zb^N,X)$, with $N\in\Nb$ and $X$ being a Banach space, simply by $\Xb$.

\medskip
As a foretaste of what is to come we announce an operator algebra $\pb{\Xb}$ which
marries up with the new triple ($\Pc$-compactness, $\Pc$-Fredholmness, $\Pc$-strong 
convergence) as $\lb{\Xb}$ does with the classical one, and by this it provides
a selfcontained ``universe'' which amazingly reflects and improves what we know from
$\lb{\Xb}$: This set $\pb{\Xb}$ is an inverse closed Banach subalgebra of $\lb{\Xb}$. 
It is closed with respect to $\Pc$-strong convergence, that is the $\Pc$-strong limit 
of a sequence in $\pb{\Xb}$ always belongs to $\pb{\Xb}$ again, and there are Banach 
Steinhaus type results for the $\Pc$-strong convergence. Multiplication with 
$\Pc$-compact operators turns $\Pc$-strongly converging sequences into norm converging
sequences. The $\Pc$-compact operators form a closed two-sided ideal in $\pb{\Xb}$, 
hence permit to introduce the $\Pc$-Fredholm property as ``invertibility up to 
$\Pc$-compact operators''. $\pb{\Xb}$ proves to be closed under passing to 
$\Pc$-regularizers, and the usual Fredholm property perfectly aligns in that new 
framework. 

Moreover, it should be mentioned that one can carry over the theory on the stability 
of strongly converging approximation methods to $\Pc$-strongly converging methods, 
and actually this even turns out to be the more natural framework for such questions 
in a sense. This is not subject of the present paper, but can be found in e.g.
\cite{NonStrongly, RaRoSiL2, LimOps, Marko, SeSi2, SeSi3, MaSaSe}.

\medskip
This work is organized as follows: The second part is devoted to the mentioned algebraic 
framework which provides the tools for the study of convergence and the Fredholm
properties of the elements in $\pb{\Xb}$. In Section \ref{SBDO} we discuss the application
of these techniques to band-dominated operators and certain subclasses. In 
particular Theorem \ref{TWiener2} is proved there. 
Some possible extensions, generalizations and questions concerning the world beyond 
$\Alp$ and $\pb{\Xb}$ are discussed in the final Section \ref{SGen}.

\section{The \texorpdfstring{$\Pc$}{P}-theory and the limit operator method}
\label{SAP}

\subsection{The approximate projection \texorpdfstring{$\Pc$}{P}}
Given a set $U\subset\Zb^N$ we define $P_U$ acting as operator on $\Xb=l^p(\Zb^N,X)$ 
by
\[x=(x_i)\mapsto (P_U x)_i:=\begin{cases} x_i&\text{ if } i\in U\\0&
\text{ if } i\notin U.\end{cases}\]
Clearly, $P_U$ and $Q_U:=I-P_U$ are complementary projections. The most important 
operators among them are the canonical projections $P_k:=P_{\{-k,\ldots,k\}^N}$
and $Q_k:=I-P_k$ with $k\geq 0$.

\medskip
In the following we build upon the sequence
\begin{equation}\label{EAppProj}
\Pc:=(P_1,P_2,P_3,\ldots),
\end{equation}
which has the following properties
\begin{itemize}
\item $P_n\neq 0$, $P_n\neq I$ for every $n$, the $P_n$ are uniformly bounded
			and for every $m$ there is an $N_m$  such that $P_nP_m=P_mP_n=P_m$ whenever 
			$n\geq N_m$ (actually $N_m:=m$ already does the job in the present case),
\item $\sup\|P_U\|<\infty$, the supremum over all finite subsets $U$ of $\Zb^N$,
\item $\sup_n\|P_nx\|\geq\|x\|$ for every $x\in\Xb$.
\end{itemize}
Sequences $\Pc=(P_n)$ of operators with the first property are referred to as 
approximate projections, in e.g. \cite{LimOps, SeSi3}. If the second property is 
fulfilled, then one says that $\Pc$ is uniform, and the third one makes $\Pc$ to 
an approximate identity. Roughly speaking, $\Pc$ then forms a nested sequence of 
operators which permit to explore the whole space $\Xb$ in an asymptotic sense. 
Although the subsequent theory was developed for uniform approximate identities 
in general, we restrict ourselves for simplicity to the particular canonical 
choice \eqref{EAppProj} within this paper.

In the classical theory, e.g. for $p\in(1,\infty)$ and $X=\Cb^K$, one heavily 
exploits the observations that $\Pc$ consists of compact projections and converges 
strongly to the identity, as well as the fact that the multiplication by compact 
operators always turns a strongly convergent sequence $(A_n)$ into a norm convergent 
one $(A_nK)$. Unfortunately, these classical arguments break down if $\dim X=\infty$ 
or $p=\infty$.
Therefore, the $\Pc$-theory turns the table, and take the sequence $\Pc$ (for arbitrary 
$p$ and $X$) as a starting point for the definition of adapted notions of 
$\Pc$-compactness and $\Pc$-strong convergence, which mimic the behavior that is 
known from the classical world, and translate it into a more general and more flexible 
framework.
 
\subsection{\texorpdfstring{$\Pc$}{P}-compact operators}
A bounded linear operator $K$ on $\Xb$ is called $\Pc$-compact if 
\[\|KP_n - K\|\to 0 \quad \text{and} \quad \|P_nK - K\|\to 0 
	\quad\text{as\quad} n \to\infty.\]
By $\pc{\Xb}$ we denote the set of all $\Pc$-compact operators on $\Xb$ and by 
$\pb{\Xb}$ the set of all bounded linear operators $A$ for which $AK$ and $KA$ are 
$\Pc$-compact whenever $K$ is $\Pc$-compact. One may say that $\pb{\Xb}$ collects
the operators which are compatible with $\pc{\Xb}$.

\begin{prop}(\cite[Proposition 1.1.8]{LimOps}) \label{PLP}\\
The set $\pb{\Xb}$ is a closed subalgebra of $\lb{\Xb}$, it contains the identity 
operator $I$, and $\pc{\Xb}$ is a proper closed ideal of $\pb{\Xb}$. 
An operator $A\in\lb{\Xb}$ belongs to $\pb{\Xb}$ if and only if, for every $k\in\Nb$,
\begin{equation}\label{ELXP}
\|P_k A Q_n\| \to 0 \quad \text{and} \quad \|Q_n A P_k\| \to 0 
	\quad\text{as}\quad n \to \infty.
\end{equation}
\end{prop}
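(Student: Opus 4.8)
The plan is to dispose of the algebraic assertions quickly, after isolating a few elementary facts about the building blocks of $\pc{\Xb}$, and then to concentrate the real work on the norm characterization \eqref{ELXP}, whose converse implication is the only substantial step. First I would record the preliminaries. Put $C:=\sup_n\|P_n\|<\infty$ (finite since $\Pc$ is uniform), so that $\|Q_n\|\leq 1+C$. Because $P_nP_k=P_kP_n=P_k$ for $n\geq k$, each canonical projection $P_k$ is itself $\Pc$-compact: indeed $\|P_kP_n-P_k\|=\|P_nP_k-P_k\|=0$ once $n\geq k$, so $P_k\in\pc{\Xb}$. Next I would verify that $\pc{\Xb}$ is a norm-closed linear space that is also closed under products: for $\Pc$-compact $K,L$ the estimates $\|KLP_n-KL\|\leq\|K\|\,\|LP_n-L\|$ and $\|P_nKL-KL\|\leq\|P_nK-K\|\,\|L\|$ show $KL\in\pc{\Xb}$, while norm-closedness follows from a standard three-$\epsilon$ argument using the bound $\|P_n\|\leq C$. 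Thus $\pc{\Xb}$ is a closed algebra.

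With these facts the algebraic claims are short. One has $I\in\pb{\Xb}$ since $IK=KI=K$; closure of $\pb{\Xb}$ under the algebra operations follows from $\pc{\Xb}$ being a linear space together with the rearrangements $(AB)K=A(BK)$ and $K(AB)=(KA)B$. Norm-closedness of $\pb{\Xb}$ is inherited from that of $\pc{\Xb}$: if $A_j\to A$ with $A_j\in\pb{\Xb}$, then for any $\Pc$-compact $K$ we have $A_jK\to AK$ in norm with $A_jK\in\pc{\Xb}$, whence $AK\in\pc{\Xb}$, and symmetrically for $KA$. That $\pc{\Xb}$ is a two-sided ideal of $\pb{\Xb}$ is immediate from the definition of $\pb{\Xb}$, and $\pc{\Xb}\subseteq\pb{\Xb}$ because $\pc{\Xb}$ is an algebra; it is proper because $\|IP_n-I\|=\|Q_n\|\geq 1$ (a nonzero projection has norm at least $1$), so $I\notin\pc{\Xb}$.

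For the characterization, the forward implication is easy once $P_k\in\pc{\Xb}$ is known: if $A\in\pb{\Xb}$ then $P_kA$ and $AP_k$ are $\Pc$-compact, and unwinding the two defining limits gives precisely $\|P_kAQ_n\|=\|P_kAP_n-P_kA\|\to 0$ and $\|Q_nAP_k\|=\|P_nAP_k-AP_k\|\to 0$. The hard part, and the step I expect to be the main obstacle, is the converse: from control of $A$ against only the special projections $P_k$ one must recover $\Pc$-compactness of $AK$ and $KA$ for every $\Pc$-compact $K$. Two of the four required limits are automatic, namely $\|AKP_n-AK\|\leq\|A\|\,\|KP_n-K\|\to 0$ and $\|P_nKA-KA\|\leq\|P_nK-K\|\,\|A\|\to 0$; the remaining two, $\|Q_nAK\|\to 0$ and $\|KAQ_n\|\to 0$, are where the hypothesis must be fed in. The idea is to approximate a general $\Pc$-compact $K$ by $P_kK$ (respectively $KP_k$): given $\epsilon>0$, choose $k$ with $\|K-P_kK\|<\epsilon$ and split $\|Q_nAK\|\leq\|Q_nA\|\,\|K-P_kK\|+\|Q_nAP_k\|\,\|K\|\leq(1+C)\|A\|\,\epsilon+\|Q_nAP_k\|\,\|K\|$; letting $n\to\infty$ kills the second summand by hypothesis, so $\limsup_n\|Q_nAK\|\leq(1+C)\|A\|\,\epsilon$, and since $\epsilon$ is arbitrary, $\|Q_nAK\|\to 0$. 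The symmetric split with $\|K-KP_k\|<\epsilon$ disposes of $\|KAQ_n\|\to 0$, completing the converse and hence the proposition.
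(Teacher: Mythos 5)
Your proposal is correct and complete; note that the paper itself offers no proof of this proposition, quoting it directly from \cite{LimOps}, and your argument --- showing $P_k\in\pc{\Xb}$, deducing the algebraic statements, and proving the converse of \eqref{ELXP} by splitting $\|Q_nAK\|$ and $\|KAQ_n\|$ via the approximations $P_kK$ and $KP_k$ --- is precisely the standard one used in that reference. Nothing is missing.
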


At this point we shall rest for a moment just to shortly visualize the relations
between the operator algebras which have been introduced so far. For this we borrow
Figure \ref{Fig1} from \cite[Figure 1]{Marko}. Here $\lc{\Xb}$ denotes the
set of compact operators as usual. 
\begin{figure}[ht]
\centering
	\includegraphics[scale=0.8]{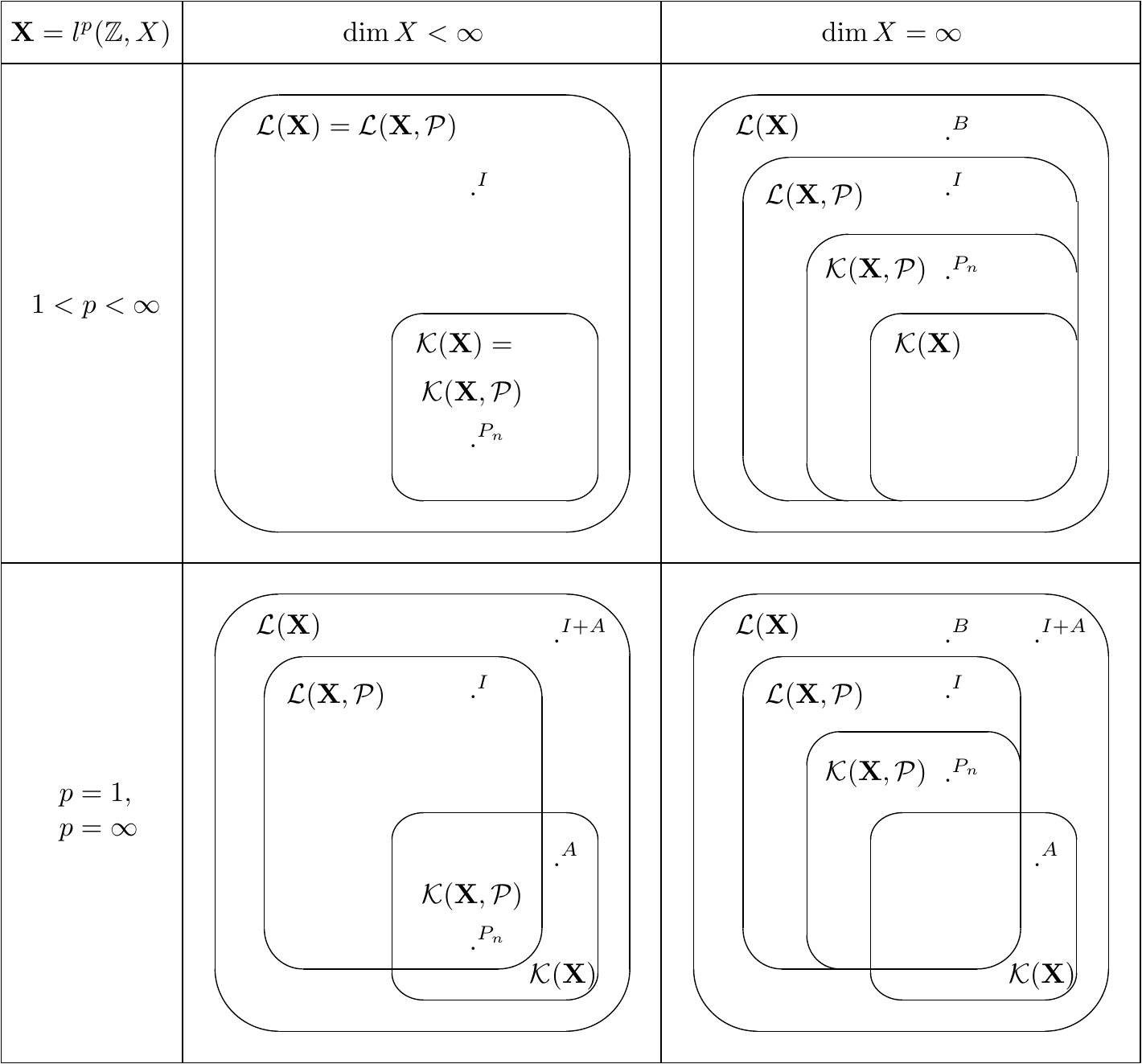}
\caption{Venn diagrams depending 
					on $\Xb=l^p(\Zb,X)$.}
\label{Fig1}
\end{figure}

\begin{ex}
Also the following examples of operators which substantiate the proper inclusions in 
this picture are taken from \cite{Marko}.
\begin{itemize}
\item Let $a\in X$ and $f\in X^*$ be non-zero elements and $A\in\lb{l^1}$ be 
			given by the rule
			\[A: (x_i) \mapsto \left(\ldots,0,0,\sum_i f(x_i)a, 0,0,\ldots\right).\]
			This operator is compact, but does not belong to $\pb{l^1}$ due to the 
			characterization \eqref{ELXP}. 
			The adjoint of $A$ provides the same outcome for the case $p=\infty$.
\item Let $X$ be the space $L^p(0,1)$ of all $p$-Lebesgue integrable functions over
			the interval $(0,1)$ and define $\tilde{B}:l^p(\Zb,X)\to X,\, (u_i)\mapsto v$
			by 
			\[v(x):=\begin{cases}
				u_k(x) & : \text{ if } \displaystyle x\in \left(1-\frac{1}{2^{k-1}},1-\frac{1}{2^{k}}\right)
										\text{ for one }k\in\Nb\\
				0 & : \text{ otherwise.}
			\end{cases}\]
			(As customary we let $L^\infty(0,1)$ stand for the space of all Lebesgue 
			measurable functions that are essentially bounded.)
			Then the linear operator 
			$B:(u_i)\mapsto \left(\ldots,0,0,\tilde{B}(u_i), 0,0,\ldots\right)$
			acts boundedly on $l^p=l^p(\Zb,X)$ for every $p$, but does not belong to 
			$\pb{l^p}$ in any case.
\end{itemize}
\end{ex}

\subsection{\texorpdfstring{$\Pc$}{P}-strong convergence}
A sequence $(A_n)\subset\lb{\Xb}$ is said to converge $\Pc$-strongly to $A\in\lb{\Xb}$ 
if, for every $K\in\pc{\Xb}$, both $\|(A_n-A)K\| \text{ and } \|K(A_n-A)\|$ tend to $0$ 
as $n\to\infty$. In this case we write $A_n \to A$ $\Pc$-strongly or 
$A = \plim_n A_n$.
By $\Fc(\Xb,\Pc)$ we denote the set of all bounded sequences 
$(A_n)\subset \lb{\Xb}$, which possess a $\Pc$-strong limit in $\pb{\Xb}$.

Notice that a simple calculation reveals that (cf. \cite[Proposition 1.1.14]{LimOps})
a bounded sequence $(A_n)\subset \lb{\Xb}$ converges $\Pc$-strongly  to $A\in\lb{\Xb}$
if and only if both $\|(A_n-A)P_m\| \text{ and } \|P_m(A_n-A)\|$ 
tend to $0$ as $n\to\infty$ for every $m$.
\begin{prop}\label{PPstrong}
(\cite[Corollary 1.1.16 et seq.]{LimOps} or \cite[Theorem 1.13]{SeSi3})
\begin{itemize}
\item The algebra $\pb{\Xb}$ is closed with respect to $\Pc$-strong convergence, this 
			means that if a sequence $(A_n)\subset\pb{\Xb}$ converges $\Pc$-strongly to $A$ then 
			$A\in\pb{\Xb}$. Moreover, $(A_n)$ is bounded in this case, and hence belongs 
			to $\Fc(\Xb,\Pc)$.
\item The $\Pc$-strong limit of every $(A_n)\in\Fc(\Xb,\Pc)$ is uniquely determined.
\item Provided with the linear operations
			$\alpha (A_n) + \beta (B_n) := (\alpha A_n + \beta B_n)$, the multiplication 
			$(A_n) (B_n):=(A_n B_n),$ and the norm $\|(A_n)\|:=\sup_n\|A_n\|$,
			$\Fc(\Xb,\Pc)$ becomes a Banach algebra with identity $\Ib:=(I)$. 
			The mapping $\Fc(\Xb,\Pc) \to \pb{\Xb}$ which sends $(A_n)$ to its limit 
			$A=\plim_n A_n$ is a unital algebra homomorphism and
			\begin{equation}\label{EPLim}
			\|A\| \leq \liminf_{n\to\infty} \|A_n\|.
			\end{equation}
\end{itemize}
\end{prop}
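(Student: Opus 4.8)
The plan is to prove the three items in the order dictated by their dependencies: first the automatic boundedness hidden in the first item, then the closedness of $\pb{\Xb}$, then uniqueness, and finally the Banach-algebra structure together with the homomorphism property and \eqref{EPLim}. The whole argument rests on the boundedness statement; once a $\Pc$-strongly convergent sequence from $\pb{\Xb}$ is known to be bounded, the remaining claims reduce to routine manipulations with the characterisation \eqref{ELXP} and the approximate-identity axioms. Throughout I use that the canonical projections are contractions, so $\|P_m\|\le 1$.

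For the boundedness I would start from the norm-recovery identity $\|A_n\|=\sup_m\|P_mA_n\|$, which holds because $\|P_m\|\le1$ gives $\|P_mA_nx\|\le\|A_nx\|$ while the approximate-identity property $\sup_m\|P_my\|\ge\|y\|$ applied to $y=A_nx$ gives $\sup_m\|P_mA_nx\|=\|A_nx\|$ for every $x$. Then, for each $n$, consider the bounded operator $\Psi_n\colon\pc{\Xb}\to\lb{\Xb}$, $K\mapsto KA_n$. Since $\pc{\Xb}$ is a closed ideal in $\pb{\Xb}$, hence a Banach space, and since $\|K(A_n-A)\|\to0$ shows $(KA_n)_n$ to be convergent, hence bounded, for each fixed $K$, the family $(\Psi_n)$ is pointwise bounded; the uniform boundedness principle then yields $C:=\sup_n\|\Psi_n\|<\infty$. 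As $P_m\in\pc{\Xb}$ with $\|P_m\|\le1$, it follows that $\|P_mA_n\|=\|\Psi_n(P_m)\|\le\|\Psi_n\|\le C$ for all $m,n$, and the norm-recovery identity gives $\|A_n\|\le C$. I expect this to be the only genuinely delicate step, and its decisive feature is that one must test on the left: the companion identity $\sup_m\|A_nP_m\|=\|A_n\|$ would demand $P_mx\to x$ and hence fail on $l^\infty$, whereas applying the approximate-identity property to the output $A_nx$ rather than the input $x$ is exactly what keeps the argument uniform in $p$, covering the awkward case $p=\infty$ without any recourse to a predual.

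Closedness is checked directly through \eqref{ELXP}. Fixing $k$ and splitting $P_kAQ_n=P_k(A-A_j)Q_n+P_kA_jQ_n$, the first summand is dominated by $\|P_k(A-A_j)\|\sup_n\|Q_n\|$ and is made small by choosing $j$ large, using $\|P_k(A_j-A)\|\to0$ (the case $K=P_k$ of $\Pc$-strong convergence); for that fixed $j$ one then has $\|P_kA_jQ_n\|\to0$ because $A_j\in\pb{\Xb}$. The estimate for $\|Q_nAP_k\|$ is symmetric, now using $\|(A_j-A)P_k\|\to0$. Uniqueness is immediate: if $A$ and $B$ are both $\Pc$-strong limits of $(A_n)$, then $\|P_m(A-B)\|=0$ for every $m$, so $P_m(A-B)=0$, and since $P_my=0$ for all $m$ forces $y=0$ coordinatewise we conclude $A=B$.

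Finally, $\Fc(\Xb,\Pc)$ is a subalgebra of the Banach algebra of all bounded operator sequences (with the sup-norm and entrywise operations): closedness under sums and scalars is clear, while multiplicativity of the map $(A_n)\mapsto\plim_n A_n$ follows from $A_nB_nK-ABK=A_n(B_n-B)K+(A_n-A)BK$ together with the facts that $BK\in\pc{\Xb}$ (ideal property) and that $(A_n)$ is uniformly bounded, plus the symmetric splitting for the left multiplication. This map is plainly unital and maps into $\pb{\Xb}$. The estimate \eqref{EPLim} is then read off from $\|A\|=\sup_m\|P_mA\|=\sup_m\lim_n\|P_mA_n\|\le\liminf_n\|A_n\|$, using $\|P_mA_n\|\le\|A_n\|$; the same estimate shows that the $\Pc$-limits of a sup-norm Cauchy sequence in $\Fc(\Xb,\Pc)$ form a Cauchy sequence converging in $\pb{\Xb}$, which yields closedness, hence completeness, of $\Fc(\Xb,\Pc)$.
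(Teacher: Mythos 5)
Your proposal is correct, and the delicate points all check out: $\pc{\Xb}$ is a closed subspace of $\lb{\Xb}$, hence a Banach space, each map $K\mapsto KA_n$ is bounded with norm at most $\|A_n\|$, pointwise boundedness follows from the norm convergence of $(KA_n)_n$, so the uniform boundedness principle applies; the norm-recovery identity $\|A\|=\sup_m\|P_mA\|$ is exactly what the canonical $\Pc$ supplies; and your diagnosis that one must test with $P_m$ on the \emph{left} to survive $p=\infty$ is precisely the right one. Note that the paper does not prove this proposition at all but cites \cite{LimOps} and \cite{SeSi3}; the closest in-paper argument is the proof of the analogous statement for $S(\Xb)$ and $FS(\Xb)$ in Section \ref{SGen}, and against that your route differs in two places. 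For the boundedness the paper simply defers to \cite[Lemma 4.2]{LiChW}, so your self-contained Banach--Steinhaus argument is a genuine addition rather than a citation. For the closedness of $\pb{\Xb}$ under $\Pc$-strong limits you go through the characterization \eqref{ELXP}, whereas the paper's analogue is slicker and needs no matrix estimates: for $K\in\pc{\Xb}$ the operators $KA_n$ and $A_nK$ lie in the ideal $\pc{\Xb}$ and converge in norm to $KA$ and $AK$, so $KA,AK\in\pc{\Xb}$ because that ideal is closed, i.e.\ $A\in\pb{\Xb}$; this version also makes transparent that this item needs no boundedness of $(A_n)$ at all. Both routes are valid, and yours buys independence from Proposition \ref{PLP} being quoted only up to its algebraic part. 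The one spot where you compress exactly as much as the paper does is the completeness of $\Fc(\Xb,\Pc)$: after extracting the entrywise limits $C_n$ and the limit $C\in\pb{\Xb}$ of the $\Pc$-strong limits $C^j$, one still has to run the three-epsilon estimate showing that $(C_n)$ converges $\Pc$-strongly to $C$; you, like the paper's analogous proof (``one easily checks''), leave this routine verification implicit, which I would not count as a gap.
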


\subsection{\texorpdfstring{$\Pc$}{P}-Fredholm operators}

As a third part for our new triple we are going to define an appropriate substitute 
for the Fredholm property of operators. But, as a start, let us first recall the most 
important facts about the classical notion from any textbook on functional analysis, 
or e.g. \cite{GoKru, SeSi3}.

\subparagraph{Fredholm operators}
An operator $A\in\lb{\Xb}$ is said to be Fredholm, if its kernel and cokernel
\[\ker A:=\{x\in\Xb: Ax=0\},\quad \coker A :=\Xb/\im A\]
are of finite dimension, where $\im A:=\{Ax:x\in\Xb\}$ denotes the range of $A$.
That is the case if and only if there is a $B\in\lb{\Xb}$
such that $AB-I$ and $BA-I$ are compact operators. Therefore it is equivalent to
$A+\lc{\Xb}$ being invertible in the Calkin algebra $\lb{\Xb}/\lc{\Xb}$.

If $A$ is Fredholm then $\ind A:=\dim\ker A - \dim\coker A$ is called the index of $A$.
Moreover, $A+B$ is Fredholm for $B$ being compact or $B$ having sufficiently
small norm. If both $A$ and $B$ are Fredholm operators then the product $AB$ is 
Fredholm as well and $\ind AB =\ind A+\ind B$. The latter is called Atkinsons theorem.

\subparagraph{\texorpdfstring{$\Pc$}{P}-Fredholm operators and invertibility at infinity}
Now, replacing compact operators by $\Pc$-compact operators, we get two
possible definitions.
\begin{defn}\label{DInvAtInf}
An operator $A\in\lb{\Xb}$ is said to be invertible at infinity if there is a
$B\in\lb{\Xb}$ such that $AB-I$ and $BA-I$ are $\Pc$-compact operators.
In this case $B$ is referred to as a $\Pc$-regularizer for $A$.
\end{defn}

\begin{defn}\label{DPFred}
An operator $A\in\pb{\Xb}$ is called $\Pc$-Fredholm if the  
coset $A+\pc{\Xb}$ is invertible in the quotient algebra $\pb{\Xb}/\pc{\Xb}$.
\end{defn}
Notice that invertibility at infinity is defined for all bounded linear operators,
whereas for $\Pc$-Fredholmness we are restricted to $\pb{\Xb}$, since we need that 
$\pc{\Xb}$ forms a closed ideal in $\pb{\Xb}$. Both notions have been known and
have been studied for many years. The monographs by Rabinovich, Roch and Silbermann 
\cite{LimOps} and Lindner \cite{Marko} already contain a comprehensive theory and 
many applications of this approach. Nevertheless, it has been an open problem for a 
long time whether these two notions coincide for the operators $A\in\pb{\Xb}$.
At least the inverse closedness of $\pb{\Xb}$ in $\lb{\Xb}$ has been known 
\cite[Theorem 1.1.9]{LimOps}, based on a proof of Simonenko \cite{Sim}.
An affirmative answer to the general question was given recently by the 
author.

\begin{thm}\label{TPFredh}(\cite[Theorem 1.16]{SeSi3})\\
An operator $A\in\pb{\Xb}$ is $\Pc$-Fredholm if and only if it is invertible at 
infinity. In this case every $\Pc$-regularizer of $A$ belongs to $\pb{\Xb}$.
Particularly, $\pb{\Xb}$ is inverse closed in $\lb{\Xb}$.
\end{thm}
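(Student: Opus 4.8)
The plan is to reduce all three assertions to a single statement: if $A\in\pb{\Xb}$ and $B\in\lb{\Xb}$ satisfies $AB-I=:K_1$ and $BA-I=:K_2$ with $K_1,K_2\in\pc{\Xb}$, then in fact $B\in\pb{\Xb}$. Granting this, the theorem follows quickly. The implication ``$\Pc$-Fredholm $\Rightarrow$ invertible at infinity'' is immediate, since a $\Pc$-regularizer furnished by Definition~\ref{DPFred} already lies in $\pb{\Xb}\subset\lb{\Xb}$. Conversely, if $A$ is invertible at infinity with regularizer $B$, the reduced statement puts $B\in\pb{\Xb}$, so that $A+\pc{\Xb}$ is invertible in $\pb{\Xb}/\pc{\Xb}$; thus $A$ is $\Pc$-Fredholm and every regularizer lies in $\pb{\Xb}$. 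Taking $B=A^{-1}$ (so that $K_1=K_2=0$) then yields inverse closedness of $\pb{\Xb}$ in $\lb{\Xb}$ as the special case of a genuine two-sided inverse.

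First I would translate ``$B\in\pb{\Xb}$'' into the operational criterion \eqref{ELXP} of Proposition~\ref{PLP}: it suffices to prove, for every fixed $k\in\Nb$, that $\|P_kBQ_n\|\to0$ and $\|Q_nBP_k\|\to0$ as $n\to\infty$. Several ingredients are then free. Since $K_i\in\pc{\Xb}$ we have $\|Q_nK_i\|=\|K_i-P_nK_i\|\to0$ and $\|K_iQ_n\|\to0$. Using $P_kQ_n=0$ for $n\ge k$, multiplying $BA=I+K_2$ by $P_k$ and $Q_n$ gives $P_kBAQ_n=P_kK_2Q_n\to0$, and symmetrically $Q_nABP_k=Q_nK_1P_k\to0$. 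Moreover $A\in\pb{\Xb}$ supplies, again through \eqref{ELXP}, the decays $\|P_kAQ_n\|\to0$ and $\|Q_nAP_k\|\to0$, and since $\pc{\Xb}$ is an ideal in $\pb{\Xb}$ one even has $AP_k,P_kA\in\pc{\Xb}$.

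The hard part will be to pass from the off-diagonal decay of $A$ and from the ``wrong-sided'' estimate $P_kBAQ_n\to0$ to the off-diagonal decay of $B$ itself. I want to stress that no purely algebraic manipulation of $AB=I+K_1$ and $BA=I+K_2$ can accomplish this: every attempt to isolate $P_kBQ_n$ forces one to ``invert $A$'', which reintroduces $B$ and collapses to the tautology $BK_1=K_2B$; in particular one never gains control of $B$'s tails. The genuine input must be the locality of $A$ encoded in $A\in\pb{\Xb}$ together with the full uniformity of the approximate identity $\Pc$, namely the boundedness of $P_U$ over \emph{all} finite index sets $U$ (not merely along the nested sequence $P_n$). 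Accordingly, the plan is to localise by means of the block projections $P_U$ onto annular regions $U\subset\Zb^N$, to exploit that $A$ almost commutes with such $P_U$ modulo $\pc{\Xb}$, and to run a telescoping estimate across a growing annulus that transfers the decay of $\|P_UAP_V\|$ for far-apart blocks $U,V$ onto $\|P_kBQ_n\|$ and $\|Q_nBP_k\|$. Establishing this transfer quantitatively --- uniformly in the possibly infinite-dimensional fibre $X$ and for the extreme exponents $p\in\{0,1,\infty\}$, where the unit ball of $\im P_k$ is non-compact and the classical strong-convergence arguments are unavailable --- is precisely the obstacle and constitutes the new analytic content of \cite{SeSi3}. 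Once both decays are in hand, \eqref{ELXP} gives $B\in\pb{\Xb}$, and the reductions above assemble the full statement.
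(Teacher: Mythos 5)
Your reduction is sound, and it is indeed how the theorem is organized: everything hinges on the single claim that any $B\in\lb{\Xb}$ with $AB-I=K_1$, $BA-I=K_2$ in $\pc{\Xb}$ automatically satisfies the criterion \eqref{ELXP}; your preliminary observations ($\|K_iQ_n\|,\|Q_nK_i\|\to0$, $P_kBAQ_n\to0$, $AP_k,P_kA\in\pc{\Xb}$) are also correct. But the proposal then stops precisely where the theorem lives. The decay of $\|P_kBQ_n\|$ and $\|Q_nBP_k\|$ is never established; you explicitly declare it to be ``the obstacle'' and defer it to \cite{SeSi3}. That is an accurate description of the difficulty, but it is not a proof of it: as written, the implication (invertible at infinity $\Rightarrow$ regularizer in $\pb{\Xb}$, hence $A$ is $\Pc$-Fredholm) remains unproved, and with it all three assertions of Theorem \ref{TPFredh}.

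Moreover, the plan you sketch for the missing step rests on a misconception. You propose to ``exploit that $A$ almost commutes with such $P_U$ modulo $\pc{\Xb}$''. For finite $U$ one has $P_U\in\pc{\Xb}$, hence $AP_U,P_UA\in\pc{\Xb}$ and $[A,P_U]\in\pc{\Xb}$ holds \emph{exactly} and trivially; this carries no quantitative information, and arguing ``modulo $\pc{\Xb}$'' around $B$ is in any case illegitimate, because compatibility of $B$ with $\pc{\Xb}$ is the very thing to be proved. What is actually needed are commutators that are small \emph{in norm}, and these do not exist for the canonical or block projections: already for the shift $A=V_1$ (case $N=1$) one has $\|[A,P_n]\|\geq1$ for all $n$, and projections onto annuli behave no better. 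This is exactly the gap closed by the paper's Proposition \ref{PComm} (that is, \cite[Theorem 1.15]{SeSi3}), on which, as the paper states, the proof of Theorem \ref{TPFredh} is essentially based, and it is the ingredient your outline lacks: for the \emph{given} $A\in\pb{\Xb}$ one constructs a new uniform approximate projection $(R_n)\subset\pb{\Xb}$ --- obtained by smoothing the $P_j$, not taken from among the $P_U$ --- which is compatible with $\Pc$ (eventually $R_nP_m=P_mR_n=P_m$ and $R_mP_n=P_nR_m=R_m$) and satisfies $\|[A,R_n]\|\to0$. With this tool the algebra you dismissed as tautological becomes effective: from $BA=I+K_2$ and $AB=I+K_1$ one gets, for instance,
\[
R_mB \;=\; BR_m + BR_mK_1 + B[A,R_m]B - K_2R_mB,
\]
and after multiplying by $P_k$ and $Q_n$ and using $P_kR_m=P_k$ and $R_mQ_n=0$ for suitably related indices, the resulting terms are handled by $\|K_1Q_n\|\to0$, by the norm-smallness of $[A,R_m]$, and by $\|K_2(I-R_m)\|\to0$ together with the bootstrap that \cite{SeSi3} runs to completion. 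So the concrete missing idea is the construction of an $A$-dependent, asymptotically commuting approximate projection; without it, the telescoping-over-annuli plan you describe cannot be carried out.
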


Its proof is essentially based on \cite[Theorem 1.15]{SeSi3} which was new as well, 
interesting on its own and, in the present context, reads as follows.
\begin{prop}\label{PComm}
Let $A\in\pb{\Xb}$. Then there is a uniform approximate projection 
$(R_n)\subset\pb{\Xb}$ with
\begin{itemize}
\item For every $m\in\Nb$ there is an $N_m\in\Nb$ such that for all $n\geq N_m$
			\[R_nP_m=P_mR_n=P_m \quad\text{and}\quad R_mP_n=P_nR_m=R_m.\]
\item The commutators $\|[A,R_n]\|:=\|AR_n-R_nA\|$ tend to zero as $n\to\infty$.
\end{itemize}
\end{prop}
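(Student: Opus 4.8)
The plan is to build the $R_n$ as slowly varying \emph{diagonal} operators whose rate of descent is tailored to the off-diagonal decay of $A$ recorded in \eqref{ELXP}. First I would use \eqref{ELXP} to fix an increasing grid $0=r_0<r_1<r_2<\cdots\to\infty$ with
\[\|P_{r_i}AQ_{r_{i+1}}\|+\|Q_{r_{i+1}}AP_{r_i}\|<2^{-i}\qquad(i\ge 0),\]
which is possible because for each fixed $r_i$ the left-hand side tends to $0$ as the outer radius grows. Writing $D_q:=P_{r_q}-P_{r_{q-1}}$ for the block projection onto the annulus $r_{q-1}<|i|_\infty\le r_q$, I would then choose numbers $\gamma_p^{(n)}\in[0,1]$ that are $1$ for $p<I_n$, drop by the constant increment $\eta_n:=1/(J_n-I_n)$ on each block with $I_n\le p\le J_n$, and vanish for $p>J_n$, and set $R_n:=\sum_p\gamma_p^{(n)}D_p$, a multiplication operator with a block-constant, trapezoidal symbol. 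Being diagonal, each $R_n$ is a band operator, so $R_n\in\Alp\subseteq\pb{\Xb}$ by \eqref{ELXP} and Proposition \ref{PLP}, and $\|R_n\|\le 1$; hence $(R_n)$ is uniformly bounded, and it is a uniform approximate projection since the $R_n$ are contractive diagonal operators.

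Taking $I_n\to\infty$ and $J_n-I_n\to\infty$ (for instance $I_n=n$, $J_n=2n$) makes the plateau radius $r_{I_n}$ tend to infinity while each support $\{|i|_\infty\le r_{J_n}\}$ stays finite, and the algebraic axioms then follow at once from the flatness and the support of the symbols: since $\gamma_p^{(n)}=1$ for $p<I_n$ and $I_n\to\infty$, for every $m$ one has $R_nP_m=P_mR_n=P_m$ and $R_nR_m=R_mR_n=R_m$ for all large $n$ (namely once the plateau of $R_n$ covers both $\{|i|_\infty\le m\}$ and the support of $R_m$), while $R_mP_n=P_nR_m=R_m$ as soon as $\{|i|_\infty\le n\}$ contains the support of $R_m$; together with $R_n\neq 0,I$ this gives the first bullet.

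All the work sits in the commutator estimate. In the matrix picture $A=(A_{ij})$, the block-constant symbol makes
\[[A,R_n]=\sum_{p,q}\bigl(\gamma_q^{(n)}-\gamma_p^{(n)}\bigr)\,D_pAD_q,\]
so the diagonal blocks $p=q$ drop out. For neighbouring blocks $|p-q|=1$ the coefficient is at most $\eta_n$, and each of the two block-off-diagonals $\sum_pD_pAD_{p\pm1}$ has norm $\le\|A\|$ (an elementary estimate, valid on every $l^p$, using that distinct blocks have disjoint supports); hence this ``near'' part has norm $\le 2\eta_n\|A\|\to0$. The \textbf{main obstacle} is the ``far'' part $\sum_{|p-q|\ge 2}(\gamma_q-\gamma_p)D_pAD_q$: a general $A\in\pb{\Xb}$ has \emph{unbounded} band width, and because the plateau radius recedes to infinity the commutator probes the long-range mass of $A$ at large radius, where \eqref{ELXP} — which only controls mass anchored at a fixed inner region — gives no uniform help. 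A single, $A$-independent profile is in fact doomed here, which is exactly why the grid above was chosen to depend on $A$.

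The resolution, which I expect to be the technical heart, is to estimate the far part block-column by block-column. Its ``upper'' half equals $\sum_qL_qAD_q$ with $L_q:=\sum_{p\ge q+2}(\gamma_q-\gamma_p)D_p$, a diagonal contraction supported in $\{|i|_\infty>r_{q+1}\}$, so $\|L_qAD_q\|\le\|Q_{r_{q+1}}AP_{r_q}\|<2^{-q}$; summing the tail $q\ge K$ gives at most $2^{-K+1}$. For the finitely many columns $q<K$ one instead uses that $L_q$ is then supported beyond radius $r_{I_n-1}$, so $\|L_qAD_q\|\le\|Q_{r_{I_n-1}}AP_{r_K}\|$, which for fixed $K$ tends to $0$ as $I_n\to\infty$ by \eqref{ELXP}. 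Balancing the two — first pick $K$ so that $2^{-K+1}$ is small, then $n$ large — forces the upper far part to $0$; the lower far part is symmetric, using the other half of the grid bound and of \eqref{ELXP}. Combining the near and far estimates yields $\|[A,R_n]\|\to0$, completing the second bullet.
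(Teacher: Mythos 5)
Your proof is essentially correct, and it is essentially the known one: the paper itself does not prove Proposition \ref{PComm} but quotes it from \cite[Theorem 1.15]{SeSi3}, whose proof rests on the same two ingredients you use, namely operators $R_n$ that are trapezoidal, block-constant smoothings (equivalently, convex combinations) of the $P_k$ along a grid $(r_i)$ chosen inductively through \eqref{ELXP} so that $\|P_{r_i}AQ_{r_{i+1}}\|+\|Q_{r_{i+1}}AP_{r_i}\|$ is summably small, and a splitting of $[A,R_n]$ into a near-diagonal part, controlled by the slope $\eta_n$ via disjointness of the blocks on \emph{both} sides (as you correctly note, a plain triangle inequality would only give $\|A\|$), and a far part, controlled by the $2^{-q}$ grid bounds for the tail $q\ge K$ and by \eqref{ELXP} together with the receding plateau for the finitely many columns $q<K$. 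Your ``first $K$, then $n$'' balancing is exactly the right conclusion.

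Three details should be written out to make this airtight; none is an obstruction. (i) The identity $[A,R_n]=\sum_{p,q}(\gamma_q^{(n)}-\gamma_p^{(n)})D_pAD_q$ involves infinite block sums, and on $l^\infty$ one cannot insert $I=\sum_p D_p$ by strong convergence. The correct justification is again \eqref{ELXP}: since $R_n=P_{r_{J_n}}R_n=R_nP_{r_{J_n}}$, one has $\|Q_{r_M}AR_n\|\le\|Q_{r_M}AP_{r_{J_n}}\|\to0$ and $\|R_nAQ_{r_M}\|\to0$ as $M\to\infty$, so $[A,R_n]$ is the norm limit of $P_{r_M}AR_n-R_nAP_{r_M}=\sum_{p,q\le M}(\gamma_q^{(n)}-\gamma_p^{(n)})D_pAD_q$, and the same decay shows each of your near, upper-far and lower-far pieces is a norm-convergent sum and that they add up to this limit. (ii) Uniformity of the approximate projection $(R_n)$ in the sense of \cite{SeSi3} asks for more than $\|R_n\|\le1$: one needs a uniform bound on sums of differences of the $R_n$ taken over disjoint index ranges. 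This does hold for your choice $I_n=n$, $J_n=2n$, because the symbols $\gamma^{(n)}$ then increase pointwise in $n$, so any such sum is a multiplication operator with values in $[0,1]$; but it is this monotone structure, not contractivity of the individual $R_n$, that yields it. (iii) A trivial indexing slip: with $r_0=0$ and the paper's convention $P_0=P_{\{0\}^N}\neq0$, the annuli $D_q$, $q\ge1$, miss the coordinate $i=0$, so the central block $D_0:=P_{r_0}$ must be included in the plateau; otherwise $R_nP_m=P_m$ fails at that coordinate.
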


\subparagraph{Fredholm operators and \texorpdfstring{$\Pc$}{P}-compact projections}
There is a third and very fruitful way of characterizing Fredholm operators:
One can ``capture'' their kernel and range via compact projections (As usual, 
an operator $P\in\lb{\Xb}$ is called projection if $P^2=P$):

\begin{itemize}
\item An operator $A\in\lb{\Xb}$ is Fredholm if and only if there exist projections
	 			$P,P' \in \lc{\Xb}$ such that $\im P = \ker A$ and $\ker P' = \im A$.
	 			
\item An operator $A\in\lb{\Xb}$ is not Fredholm if and only if for every $\epsilon>0$ and
				every $l\in\Nb$ there exists a projection $Q\in\lc{\Xb}$ 	 with $\rk Q \geq l$ 
				such that $\|AQ\|<\epsilon$ or $\|QA\|<\epsilon.$
\end{itemize}
This characterization is closely related to the existence of a generalized inverse $B$
for an operator $A$, that is $A=ABA$ and $B=BAB$ holds and $P=I-BA$, $P'=I-AB$.
\footnote{We will see some more details in the proof of Corollary \ref{CFredh}.}

Clearly, the question standing to reason is, whether the characterization of the
Fredholm property for operators in $\pb{\Xb}$ might be even possible with 
\mbox{$\Pc$-compact} projections instead of compact ones. Also this hope comes true 
as it was recently proved. Here are the details:

\begin{thm}\label{TPDich}(\cite[Proposition 1.27]{SeSi3})
Let $A\in\pb{\Xb}$.
\begin{itemize}
\item $A$ is Fredholm if and only if there exist projections
	 			$P,P' \in \pc{\Xb}$ of finite rank such that 
				$\im P = \ker A$ and $\ker P' = \im A$.
\item $A$ is not Fredholm if and only if for every $\epsilon>0$ and
				every $l\in\Nb$ there exists a projection $Q\in\pc{\Xb}$ with $\rk Q \geq l$ 
				such that $\|AQ\|<\epsilon$ or $\|QA\|<\epsilon.$ 
\end{itemize}
\end{thm}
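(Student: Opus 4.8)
\section*{Proof proposal for Theorem \ref{TPDich}}

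The plan is to read off both dichotomies from their classical counterparts recalled just above, the only new ingredient being that the finite-dimensional (co)kernel data of a Fredholm $A\in\pb{\Xb}$ can be \emph{localised} by the approximate projection $\Pc$. The workhorse is the following lemma: every finite-rank $F\in\pb{\Xb}$ already lies in $\pc{\Xb}$. To prove it, set $V:=\im F$, a finite-dimensional space. First I would check that $P_m$ is injective on $V$ for some $m_0$: otherwise one finds unit vectors $v_m\in V\cap\im Q_m$, and a norm-convergent subsequence (compactness of the sphere of $V$) would produce a nonzero element of $V$ vanishing in every coordinate, which is absurd. Thus $\|P_{m_0}w\|\geq c\|w\|$ on $V$ for some $c>0$, and since $FQ_nx\in V$ we get $\|FQ_n\|\leq c^{-1}\|P_{m_0}FQ_n\|\to 0$ by \eqref{ELXP}. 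Splitting $Q_nF=Q_nFP_k+Q_nFQ_k$, letting $n\to\infty$ (again \eqref{ELXP}) and then $k\to\infty$ (using $\|FQ_k\|\to 0$ and the uniform bound $\|Q_n\|\leq C$) gives $\|Q_nF\|\to 0$ as well, so $F\in\pc{\Xb}$.

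For the first item the implication ``$\Leftarrow$'' is classical and uses only finite rank: $\im P=\ker A$ forces $\dim\ker A<\infty$, while $\ker P'=\im A$ exhibits $\im A$ as a closed subspace of codimension $\rk P'$, so $A$ is Fredholm. For ``$\Rightarrow$'' I must produce \emph{$\Pc$-compact} finite-rank projections with $\im P=\ker A$ and $\ker P'=\im A$; by the lemma it suffices to build finite-rank projections with these ranges and kernels inside $\pb{\Xb}$, for which it is enough to know that $\ker A$ and some complement of $\im A$ are $\Pc$-captured, i.e.\ $\|Q_nv\|\to 0$ on them. The capture of $\ker A$ I would extract from Proposition \ref{PComm}: if $Av=0$ then $\|AR_nv\|=\|[A,R_n]v\|\to 0$, and since $A$ is bounded below on a complement of its finite-dimensional kernel, $R_nv$ is, up to a norm-null term, a bounded sequence in $\ker A$; a subsequence converges to some $u\in\ker A$, comparison of the $P_m$-parts (where $P_mR_n=P_m$ eventually) with the approximate-identity property forces $u=v$, hence $R_{n_k}v\to v$, and as each $R_{n_k}\in\pc{\Xb}$ the estimate $\|Q_nv\|\leq\|Q_n\|\,\|v-R_{n_k}v\|+\|Q_nR_{n_k}\|\,\|v\|$ yields $\|Q_nv\|\to 0$.

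The genuinely delicate point --- and the \emph{main obstacle} --- is the dual statement, that $\im A$ admits a $\Pc$-captured finite-dimensional complement. On $l^\infty$ one cannot pass to a predual or to adjoints (precisely the situation this paper is built to avoid), and the captured vectors form only the non-dense $l^0$-part, so the symmetric cokernel argument does not run verbatim. The route I would take is to first establish that a Fredholm $A\in\pb{\Xb}$ is invertible at infinity: the captured kernel already provides a $\Pc$-compact projection $\tilde P$ onto $\ker A$ and hence a one-sided relation $B_1A=I-\tilde P$, and the remaining cokernel localisation (the hard half) is where Proposition \ref{PComm} and Theorem \ref{TPFredh} are indispensable. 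Once invertibility at infinity is in hand, Theorem \ref{TPFredh} supplies a $\Pc$-regularizer $B\in\pb{\Xb}$; writing $T:=I-AB\in\pc{\Xb}$ one has $\Xb=\im A+\im T$, so finitely many $w_j\in\im T$ (captured, since $\|Q_nw_j\|\leq\|Q_nT\|\,\|z_j\|\to 0$) represent a basis of $\Xb/\im A$. Their span is the desired complement, the associated projection lies in $\pb{\Xb}$, and the lemma closes the first item.

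For the second item ``$\Leftarrow$'' is elementary and needs no $\Pc$-compactness: were $A$ Fredholm it would be bounded below by some $c>0$ on a finite-codimensional subspace $M$ and have finite-codimensional range, so a projection $Q$ of rank exceeding these dimensions with $\|AQ\|<\epsilon$ (resp.\ $\|QA\|<\epsilon$) for $\epsilon<c$ would produce, via $\im Q\cap M\neq\{0\}$ (resp.\ the corresponding cokernel/adjoint statement), a unit vector with $\|Ax\|<\epsilon$, a contradiction. For ``$\Rightarrow$'', non-Fredholmness means that either $A$ fails to be bounded below modulo a finite-dimensional subspace --- so for every $\epsilon,l$ there is an $l$-dimensional space on which $\|A\cdot\|<\epsilon\|\cdot\|$ --- or, failing that, $\coker A$ is infinite-dimensional. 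In the first case I would relocate such an almost-null subspace into a captured one by replacing its basis $\{x_j\}$ by $\{R_nx_j\}$ for large $n$: the commutator bound keeps $\|AR_nx_j\|$ small, capture keeps the vectors independent, and (as in the first item) their span carries a $\Pc$-compact projection $Q$ with $\rk Q\geq l$ and $\|AQ\|<\epsilon$. The second case is handled symmetrically on the cokernel side, inheriting the same $l^\infty$-difficulty flagged above, and produces a $Q\in\pc{\Xb}$ with $\|QA\|<\epsilon$.
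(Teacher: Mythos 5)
Your preparatory material is largely sound, and it is worth saying that the paper itself offers no proof to compare against: Theorem \ref{TPDich} is quoted verbatim from \cite[Proposition 1.27]{SeSi3}, so your attempt stands or falls on its own. The good parts: your lemma that every finite-rank operator in $\pb{\Xb}$ automatically lies in $\pc{\Xb}$ is correct (the injectivity of some $P_{m_0}$ on the range, then \eqref{ELXP}, then the splitting $Q_nF=Q_nFP_k+Q_nFQ_k$ all work); the capture of $\ker A$ via the commutator estimate $\|AR_nv\|=\|[A,R_n]v\|\to 0$ from Proposition \ref{PComm} is exactly the right mechanism; and both elementary implications (existence of the projections $\Rightarrow$ Fredholm; existence of the small-norm projections $\Rightarrow$ not Fredholm) are fine. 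Even your case of a kernel-side failure in the second item is repairable: what keeps the vectors $R_nx_j$ independent is not ``capture'' but the fact that $R_n$ is eventually bounded below on any fixed finite-dimensional subspace (use $P_mR_n=P_m$, monotonicity of $\|P_mv\|$, and compactness of the unit sphere), after which a $\Pc$-compact projection onto $\spn\{R_nx_j\}$ is obtained by composing any bounded projection with a suitable $P_{m'}$.

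The genuine gap is the cokernel localisation, and your route to it is circular. You propose to ``first establish that a Fredholm $A\in\pb{\Xb}$ is invertible at infinity'' and then let Theorem \ref{TPFredh} supply a $\Pc$-regularizer $B\in\pb{\Xb}$, from which the captured complement of $\im A$ (inside $\im T$, $T=I-AB$) and the projection $P'$ would indeed follow. But invertibility at infinity demands a two-sided relation modulo $\pc{\Xb}$; your kernel capture delivers only the left relation $B_1A=I-\tilde P$, and the missing right relation $AB_2=I-P'$ with $P'\in\pc{\Xb}$ \emph{is} the cokernel localisation you are trying to produce. Theorem \ref{TPFredh} cannot break this circle, because its hypothesis is $\Pc$-Fredholmness (equivalently invertibility at infinity), not classical Fredholmness; a classical regularizer does not help, since its defect operators are compact but in general not $\Pc$-compact when $\dim X=\infty$ (compare the example after Proposition \ref{PLP}). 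Note that in this paper the implication ``Fredholm $\Rightarrow$ $\Pc$-Fredholm'' (Corollary \ref{CFredh}) is \emph{deduced from} Theorem \ref{TPDich}, so invoking it here inverts the logical order. The same hole reappears in the second item: the infinite-codimension case is declared to be ``handled symmetrically \ldots inheriting the same $l^\infty$-difficulty,'' yet you yourself correctly observe why the symmetric (dual) argument fails on $l^\infty$ --- a functional vanishing on every $\im P_m$ need not vanish, since $\bigcup_m\im P_m$ is dense only in the $l^0$-part --- and no substitute is supplied. In short: everything you prove is either elementary or concerns the kernel side, while the range/cokernel side, which is precisely the depth of \cite[Proposition 1.27]{SeSi3} and the reason the predual assumption \eqref{CPrae} could previously not be removed, remains unproven.
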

Hence we can embed the classical Fredholm property into the $\Pc$-framework:
The implication ($A\in\pb{\Xb}$ $\Pc$-Fredholm $\Rightarrow$ $A$ Fredholm) holds 
for every $A\in\pb{\Xb}$ if and only if $\dim X<\infty$. This easily follows 
since $\pc{\Xb}\subset \lc{\Xb}$ if $\dim X<\infty$, whereas in case
$\dim X=\infty$ the projections $P_m$ are not compact and hence all $Q_m$ are
$\Pc$-Fredholm but not Fredholm. The converse implication (which is much more 
important since it guarantees that we can study all Fredholm operators by the 
tools that emerge from the $\Pc$-theory) always holds:
\begin{cor}\label{CFredh} 
Let $A\in\pb{\Xb}$. If $A$ is Fredholm then $A$ is $\Pc$-Fredholm and has a 
generalized inverse $B\in\pb{\Xb}$, i.e. $A=ABA$ and $B=BAB$. Moreover, $A$ is 
Fredholm of index zero if and only if there exists an invertible operator 
$C\in\pb{\Xb}$ and an operator $K\in\pc{\Xb}$ of finite rank such that $A=C+K$.
\end{cor}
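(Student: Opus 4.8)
The plan is to ground everything on Theorems \ref{TPFredh} and \ref{TPDich}, which bridge the classical and the $\Pc$-world. First I would treat the opening assertion. Assuming $A\in\pb{\Xb}$ is Fredholm, Theorem \ref{TPDich} supplies finite-rank projections $P,P'\in\pc{\Xb}$ with $\im P=\ker A$ and $\ker P'=\im A$. Since $P$ projects onto $\ker A$, the space splits as $\Xb=\ker A\oplus\ker P$, and $A$ restricts to a bounded bijection from $\ker P$ onto the closed subspace $\im A=\ker P'$; by the open mapping theorem this restriction has a bounded inverse. I would then set
\[B:=(A|_{\ker P})^{-1}(I-P'),\]
where $I-P'$ is the projection of $\Xb$ onto $\im A$ along $\im P'$. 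A direct computation yields $ABA=A$, $BAB=B$, and moreover $I-BA=P$ and $I-AB=P'$. Hence $BA-I=-P$ and $AB-I=-P'$ are $\Pc$-compact, so $B$ is a $\Pc$-regularizer and $A$ is invertible at infinity. Theorem \ref{TPFredh} then closes this part: $A$ is $\Pc$-Fredholm, and the regularizer $B$ automatically lies in $\pb{\Xb}$.

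For the index-zero characterization, the implication from right to left is the classical one: an invertible $C$ is Fredholm of index zero, and adding the finite-rank (hence compact) operator preserves both Fredholmness and the index. For the converse I would reuse the generalized inverse already built. As $\ind A=0$, the finite-dimensional spaces $\im P=\ker A$ and $\im P'$ share the same dimension, so there is a linear isomorphism $J:\im P\to\im P'$. Setting $C:=A+JP$ I would show that $C$ is bijective: relative to the splittings $\Xb=\ker A\oplus\ker P$ and $\Xb=\im A\oplus\im P'$, the operator $C$ acts as $A$ on $\ker P$ (an isomorphism onto $\im A$) and as $J$ on $\ker A=\im P$ (an isomorphism onto $\im P'$), hence is injective and surjective, and thus invertible by the open mapping theorem. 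Writing $K:=-JP$, this gives the decomposition $A=C+K$ with $K$ of finite rank.

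It remains to verify the membership claims, where the one genuinely delicate point occurs. That $C\in\pb{\Xb}$ follows once $K\in\pc{\Xb}$ is known, since $\pb{\Xb}$ contains $A$ and the ideal $\pc{\Xb}$ and is inverse closed by Theorem \ref{TPFredh}, whence $C^{-1}\in\pb{\Xb}$. To see $K=-JP\in\pc{\Xb}$, the bound $\|KP_n-K\|=\|J(PP_n-P)\|\leq\|J\|\,\|PP_n-P\|\to0$ is immediate from $P\in\pc{\Xb}$. For the reverse relation the classical argument would appeal to $P_n\to I$ strongly, which is precisely what fails for $p=\infty$ or $\dim X=\infty$. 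Instead I would exploit that the range of $K$ lies in $\im P'$, so that $JP=P'JP$ and hence
\[\|P_nK-K\|=\|(I-P_n)JP\|=\|(I-P_n)P'JP\|\leq\|P'-P_nP'\|\,\|J\|\to0,\]
the convergence coming from $P'\in\pc{\Xb}$. I expect this last step---securing the one-sided $\Pc$-compactness of the correcting operator by trapping its range inside the $\Pc$-compact projection $P'$, once strong convergence of $(P_n)$ is no longer available---to be the main obstacle, and it is exactly the point at which the $\Pc$-framework earns its keep over the classical reasoning.
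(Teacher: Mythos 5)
Your proof is correct and follows essentially the same route as the paper's own: both obtain the finite-rank projections $P,P'\in\pc{\Xb}$ from Theorem \ref{TPDich}, build the generalized inverse $B=(A|_{\ker P})^{-1}(I-P')$ (the paper writes it as $(I-P)A^{(-1)}(I-P')$, the same operator), invoke Theorem \ref{TPFredh}, and in the index-zero case perturb $A$ by a finite-rank bijection $\im P\to\im P'$, checking its $\Pc$-compactness by exactly the two one-sided estimates you give (the left-hand one via the range being trapped in $\im P'$). Your bijectivity argument for $C$ via the two splittings is a cosmetic variant of the paper's explicit formula for $C^{-1}$; there is no substantive difference.
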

\begin{proof}
Let $A\in\pb{\Xb}$ be Fredholm and $P,P' \in \pc{\Xb}$ denote projections as 
given by the previous theorem. The compression $A:\ker P\to\ker P'$ is an isomorphism
between Banach spaces, hence it has a bounded inverse $A^{(-1)}$ by the Banach Inverse 
Mapping Theorem. Now, the operator $B:=(I-P)A^{(-1)}(I-P')$ is a bounded linear 
operator on $\Xb$ with
\[AB=A(I-P)A^{(-1)}(I-P')=I-P' \text{ and } BA=(I-P)A^{(-1)}(I-P')A=I-P.\]
Thus, $B$ is a $\Pc$-regularizer for $A$ and Theorem \ref{TPFredh} yields the 
$\Pc$-Fredholm property. 
Clearly, 
\[ABA=(I-P')A=A-P'A=A \quad \text{and}\quad BAB=(I-P)B=B.\]
If, additionally, $\ind A=0$ then the projections $P,P'$ are of the same (finite) rank 
and we can choose a linear bijection $T:\im P \to \im P'$. We easily check that 
$K:=P'TP$ is $\Pc$-compact of the same finite rank: for this we just note that
\[\|Q_nK\|\leq\|Q_nP'\|\|TP\|\quad\text{and}\quad\|KQ_n\|\leq\|P'T\|\|PQ_n\|\]
tend to zero as $n$ goes to infinity. Now $C:=A+K$ is the desired invertible
operator with the inverse $C^{-1}=(I-P)A^{(-1)}(I-P')+PT^{-1}P'$, where $T^{-1}$ 
denotes the inverse of $T:\im P \to \im P'$.
Conversely, if $A$ arises from an invertible operator by a finite rank perturbation 
then it is Fredholm and $\ind A=0$.
\end{proof}

The next observation from \cite{SeSi3} which is a consequence of Theorem \ref{TPDich}
clarifies the Open Problem No. 4 in \cite{LiChW} and therefore constitutes the 
essential ingredient to obviate \eqref{CPrae}, the need for a predual setting, in the 
whole theory for the $l^\infty$-case:
\begin{prop}(\cite[Proposition 1.18 and Corollary 1.19]{SeSi3})\label{Prestrict}\\
Let $A\in\pb{l^\infty}$. Then $A$ maps the subspace $l^0$ into $l^0$ and its
compression $A|_{l^0}:l^0\to l^0$ satisfies $\|A|_{l^0}\|=\|A\|$. 
Moreover, $A$ is Fredholm if and only if $A|_{l^0}$ is Fredholm, and in this case 
\[\dim\ker A = \dim\ker A|_{l^0},\quad \dim\coker A = \dim\coker A|_{l^0},
 \quad \ind A = \ind A|_{l^0}.\]
\end{prop}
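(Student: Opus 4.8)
The plan is to settle the three assertions in turn, using the characterization \eqref{ELXP} for the mapping and norm statements and the dichotomy of Theorem \ref{TPDich} for the Fredholm statement; throughout one uses that $x\in l^\infty$ lies in $l^0$ precisely when $\|Q_n x\|\to 0$. To see that $A$ maps $l^0$ into $l^0$, I would fix $x\in l^0$ and $\epsilon>0$, choose $m$ with $\|Q_m x\|<\epsilon$, and split $\|Q_n A x\|\le \|Q_n A P_m\|\,\|x\| + \|A\|\,\|Q_n\|\,\|Q_m x\|$. The first summand tends to $0$ by \eqref{ELXP} with $k=m$, the second is bounded by a constant times $\epsilon$ since the $Q_n$ are uniformly bounded; letting $n\to\infty$ and then $\epsilon\to0$ gives $Ax\in l^0$. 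The bound $\|A|_{l^0}\|\le\|A\|$ is trivial; for the reverse, given $x\in l^\infty$ with $\|x\|\le1$ and $\epsilon>0$, I would pick $l$ with $\|P_l A x\|>\|Ax\|-\epsilon$ (possible since $\|P_l Ax\|$ increases to $\|Ax\|$) and then $n$ with $\|P_l A Q_n\|<\epsilon$ by \eqref{ELXP}; using $\|P_l\|=1$ and $P_n x\in l^0$, $\|P_n x\|\le1$, this yields
\[\|A|_{l^0}\|\ge\|A P_n x\|\ge\|P_l A P_n x\|\ge\|P_l A x\|-\|P_l A Q_n\|\,\|x\|>\|Ax\|-2\epsilon,\]
so $\|A|_{l^0}\|\ge\|A\|$. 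Applied to arbitrary elements of $\pb{l^\infty}$, this makes $B\mapsto B|_{l^0}$ an isometric multiplicative map on $\pb{l^\infty}$ (with $(BC)|_{l^0}=B|_{l^0}C|_{l^0}$ by the mapping property), which is the decisive bookkeeping device below.

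Assume first that $A$ is Fredholm on $l^\infty$ and transfer downward. By Theorem \ref{TPDich} and the proof of Corollary \ref{CFredh} there are finite-rank $\Pc$-compact projections $P,P'$ with $\im P=\ker A$, $\ker P'=\im A$, and a generalized inverse $B\in\pb{l^\infty}$ with $BA=I-P$, $AB=I-P'$. Since every $\Pc$-compact operator maps $l^\infty$ into $l^0$ (as $\|Q_n Kx\|\le\|Q_nK\|\,\|x\|\to0$), the ranges of $P,P'$ lie in $l^0$ and all of $A,B,P,P'$ restrict to $l^0$, preserving the identities $ABA=A$, $BAB=B$, $BA=I-P$, $AB=I-P'$. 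From $\im P\subset l^0$ one gets $\ker A|_{l^0}=\ker A\cap l^0=\im P$ and $\im(P|_{l^0})=\im P$; likewise $\im(A|_{l^0})=\im(AB|_{l^0})=\ker(P'|_{l^0})$ and $\im(P'|_{l^0})=\im P'$. Hence $A|_{l^0}$ is Fredholm with $\ker A|_{l^0}=\ker A$ and $\coker A|_{l^0}\cong\im(P'|_{l^0})=\im P'\cong\coker A$, which yields the three claimed equalities.

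For the converse I would prove the contrapositive with the dichotomy, all cited results being available on $l^0$ since it is one of the admissible spaces $l^p$. If $A$ is \emph{not} Fredholm on $l^\infty$, then by Theorem \ref{TPDich} for every $\epsilon>0$ and $l\in\Nb$ there is a finite-rank $\Pc$-compact projection $Q$ on $l^\infty$ with $\rk Q\ge l$ and $\|AQ\|<\epsilon$ or $\|QA\|<\epsilon$. As $Q$ is $\Pc$-compact its range lies in $l^0$, so $Q|_{l^0}$ is again a $\Pc$-compact projection on $l^0$ with $\im(Q|_{l^0})=\im Q$, hence of the same rank; and by the isometry of the first paragraph applied to $AQ,QA\in\pb{l^\infty}$ one has $\|A|_{l^0}Q|_{l^0}\|=\|AQ\|<\epsilon$ or $\|Q|_{l^0}A|_{l^0}\|=\|QA\|<\epsilon$. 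These projections $Q|_{l^0}$ witness, through the corresponding half of Theorem \ref{TPDich} on $l^0$, that $A|_{l^0}$ is not Fredholm. Together with the previous paragraph this establishes the equivalence, the invariants having already been matched there.

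The step I expect to be the genuine obstacle is exactly this converse: it is the point at which the predual hypothesis \eqref{CPrae} historically entered, because the naive strategy of taking a generalized inverse of $A|_{l^0}$ and extending it from $l^0$ to $l^\infty$ requires lifting functionals by Hahn--Banach in a way that need not preserve $\Pc$-compactness. The device that avoids this is to argue with $\Pc$-\emph{compact projections} rather than with regularizers: their ranges lie automatically in $l^0$, so the relevant witnesses transfer \emph{downward}, where restriction is harmless, instead of upward, and the norm identity of the first paragraph makes the transfer isometric. This is precisely why Theorem \ref{TPDich} is the essential ingredient.
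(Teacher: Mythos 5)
Your proof is correct, and it follows exactly the route the paper indicates: the paper gives no proof of Proposition \ref{Prestrict} itself (it cites \cite{SeSi3}), but explicitly describes the result as a consequence of Theorem \ref{TPDich}, which is precisely how you argue — mapping property and norm equality via \eqref{ELXP}, the forward Fredholm direction via the finite-rank projections and generalized inverse of Corollary \ref{CFredh}, and the converse by transferring the non-Fredholmness witnesses of Theorem \ref{TPDich} down to $l^0$. Two cosmetic repairs: the second bullet of Theorem \ref{TPDich} does not assert that the witnessing projections have finite rank (your argument never needs this, only $\rk Q\geq l$), and you should note explicitly that $A|_{l^0}\in\pb{l^0}$ (immediate from \eqref{ELXP} by restriction), since that hypothesis is needed to invoke Theorem \ref{TPDich} on $l^0$.
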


\subsection{The operator spectrum}

The designation ``invertibility at infinity'' already suggests that operators with
this property act on sequences $(x_i)\in\Xb$ which are supported ``far away from 
the origin'' similar to invertible operators. This is substantiated by the fact 
that invertibility at infinity aka $\Pc$-Fredholmness means invertibility up to
some $\Pc$-compact perturbations which have by definition their range of influence
essentially in a neighborhood of the origin. The next definition lays the ground 
for the limit operator technique which brings these observations into precise 
statements.

\begin{defn}
Let $A\in\pb{\Xb}$ and $(g_n)\subset\Zb^N$ be a sequence such that $|g_n|\to\infty$ 
as $n\to\infty$. The limit
\[A_g:=\plimn V_{-g_n} A V_{g_n}\]
of the sequence $(V_{-g_n} A V_{g_n})$ of shifted copies of $A$, if it exists,
is called the limit operator of $A$ with respect to the sequence $g$. The set 
$\sigma_{\op}(A)$ of all limit operators of $A$ is referred to as its operator spectrum.

Further, we say that $A$ is a rich operator if every sequence $g\subset\Zb^N$ of points 
whose absolute values tend to infinity has a subsequence $h$ such that $A_h$ exists.
The set of all rich operators is denoted by $\Lc^\$(\Xb,\Pc)$.
\end{defn}

The method of limit operators has been intensively studied during the last years and
proved to be an extremely useful tool with a wide range of applications in the theory of 
band-dominaed operators. Here we want to state only one of its highlights. More details
will follow in Section \ref{SBDO}.

\begin{thm}\label{TBdORichHS}
Let $A$ be a rich band-dominated operator. Then $A$ is $\Pc$-Fredholm if and only if
all limit operators of $A$ are invertible and their inverses are uniformly bounded.
\end{thm}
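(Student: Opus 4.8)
The plan is to prove both implications of the equivalence, and the heart of the matter is a careful use of the $\Pc$-strong convergence machinery together with Theorem \ref{TPDich}. Throughout I would work with the fact, established in Theorem \ref{TPFredh}, that for $A\in\pb{\Xb}$ being $\Pc$-Fredholm is the same as being invertible at infinity, so I may freely translate between the two and use $\Pc$-regularizers.

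For the easy direction, suppose $A$ is $\Pc$-Fredholm, hence invertible at infinity: there is a $B\in\pb{\Xb}$ with $AB-I=:K_1$ and $BA-I=:K_2$ both $\Pc$-compact. For any sequence $g=(g_n)$ with $|g_n|\to\infty$ along which the limit operator $A_g=\plim_n V_{-g_n}AV_{g_n}$ exists (and for a rich operator such sequences are plentiful, since every $g$ has a subsequence yielding a limit operator), I would show that $B_g:=\plim_n V_{-g_n}BV_{g_n}$ exists too and inverts $A_g$. The key observation is that conjugating the $\Pc$-compact operators $K_1,K_2$ by the shifts $V_{\pm g_n}$ pushes their range of influence off to infinity, so that $V_{-g_n}K_jV_{g_n}\to 0$ $\Pc$-strongly. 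Passing to $\Pc$-strong limits in the two identities $V_{-g_n}(AB)V_{g_n}=I+V_{-g_n}K_1V_{g_n}$ and its twin, and using that the limit map $\Fc(\Xb,\Pc)\to\pb{\Xb}$ is a unital algebra homomorphism (Proposition \ref{PPstrong}), gives $A_gB_g=I$ and $B_gA_g=I$. Hence each $A_g$ is invertible with $A_g^{-1}=B_g$, and the uniform bound follows because $\|B_g\|\le\liminf_n\|V_{-g_n}BV_{g_n}\|\le\sup_n\|B\|$ by \eqref{EPLim} together with the uniform boundedness of the shifts.

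The converse is the substantive direction and where the main obstacle lies. Assume all limit operators are invertible with $\sup_g\|A_g^{-1}\|<\infty$, and suppose for contradiction that $A$ is not $\Pc$-Fredholm. Using Theorem \ref{TPFredh} this means $A$ is not invertible at infinity, and I would exploit the second bullet of Theorem \ref{TPDich} to extract, for shrinking $\epsilon$ and growing rank $l$, projections $Q\in\pc{\Xb}$ witnessing the failure: more precisely the failure of invertibility at infinity produces sequences of unit vectors $x^{(n)}$ with $\|Ax^{(n)}\|\to 0$ (or the analogous statement for $A^*$, treated symmetrically) whose supports can be taken to escape to infinity. Localizing these near points $g_n$ with $|g_n|\to\infty$ and shifting by $V_{-g_n}$, the shifted vectors $V_{-g_n}x^{(n)}$ should detect a limit operator $A_g$ that is not bounded below, contradicting both the invertibility of $A_g$ and the uniform bound on the inverses. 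The delicate part is guaranteeing that the approximate null sequences can be chosen with supports marching off to infinity and sufficiently concentrated so that, after shifting, they converge in the $\Pc$-strong sense to an honest witness of non-invertibility for some limit operator; this is exactly where richness of $A$ is indispensable, since it is what lets me pass to a subsequence along which the relevant shifted copies of $A$ converge $\Pc$-strongly to a genuine element $A_g\in\sigma_{\op}(A)$.

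I expect the technical crux to be this localization-and-concentration step: converting the global failure of invertibility at infinity into a \emph{local} approximate kernel that survives the shift to infinity and the passage to a $\Pc$-strong limit. One must control that the mass of the approximate null vectors does not leak out of the window captured by the projections $P_m$ under the shift, so that the inequality $\sup_m\|P_mx\|\ge\|x\|$ making $\Pc$ an approximate identity can be brought to bear. Once a single limit operator is shown to fail to be bounded below, invertibility of that $A_g$ is violated outright, or the uniform boundedness of the family $\{A_g^{-1}\}$ fails, and in either case we contradict the hypothesis, completing the proof.
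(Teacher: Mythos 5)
Both directions of your proposal contain genuine gaps. In the forward direction, your derivation of $A_gB_g=B_gA_g=I$ is circular: the homomorphism property of Proposition \ref{PPstrong} multiplies sequences that are already known to lie in $\Fc(\Xb,\Pc)$, so invoking it to ``pass to limits'' in $V_{-g_n}(AB)V_{g_n}=I+V_{-g_n}K_1V_{g_n}$ presupposes exactly what you set out to show, namely that $(V_{-g_n}BV_{g_n})$ possesses a $\Pc$-strong limit $B_g$. There is no compactness principle supplying a $\Pc$-strongly convergent subsequence of a bounded operator sequence, and you cannot appeal to richness of $B$ either: that every $\Pc$-regularizer of a rich $\Pc$-Fredholm operator is rich is Corollary \ref{CRichOps}, which the paper deduces \emph{from} this very argument. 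The paper's proof (Theorem \ref{TLimOps}) avoids the issue entirely: from $V_{-g_n}(AB-I)V_{g_n}\to 0$ and $V_{-g_n}(BA-I)V_{g_n}\to 0$ it derives the estimates $\|T\|\le D\|A_gT\|$ and $\|T\|\le D\|TA_g\|$ for all $T\in\pc{\Xb}$, concludes via Theorem \ref{TPDich} that $A_g$ is Fredholm with trivial kernel and cokernel, hence invertible with $A_g^{-1}\in\pb{\Xb}$ by Theorem \ref{TPFredh}, and only \emph{then} verifies that $V_{-g_n}BV_{g_n}\to A_g^{-1}$ $\Pc$-strongly.

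In the converse direction the gaps are more serious. First, Theorem \ref{TPDich} concerns the classical Fredholm property, and its witnesses are $\Pc$-compact projections, which by definition are concentrated near the origin; it cannot produce approximate null vectors whose supports escape to infinity, and the passage from ``not invertible at infinity'' to such far-out vectors (or to a dual failure on the cokernel side) is itself a nontrivial claim that none of the quoted results supplies. Second, the localization-and-concentration step you correctly identify as the crux is precisely where the band-dominated structure of $A$ -- not its richness -- must enter, and your sketch never uses it. It cannot be dispensed with: the example following Corollary \ref{CRichOps} (a rich Toeplitz-plus-Hankel operator on $l^2(\Zb,\Cb)$ whose operator spectrum is the uniformly invertible singleton $\{I\}$, yet which is not $\Pc$-Fredholm) shows that the implication is false for general rich operators in $\pb{\Xb}$, so any argument relying only on richness, shifting and $\Pc$-strong limits must break down somewhere. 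Your parenthetical ``treated symmetrically for $A^*$'' also glosses over the case $p=\infty$, where the adjoint no longer acts on a sequence space; the paper's substitute for duality there is Proposition \ref{Prestrict}. For this direction the paper does not attempt a contradiction scheme at all: it invokes the Simonenko-style construction of a $\Pc$-regularizer, patching local inverses of the uniformly invertible limit operators together with a partition of unity, as in \cite{RochFredTh} and \cite{Marko}, where band-dominatedness enters through commutator estimates with slowly varying multiplication operators.
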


This theorem has been the engine for the development of the limit operator method, in a 
sense, and it has a long history from which we particularly mention the pioneering paper
\cite{LangeR} of Lange and Rabinovich. 
The proof of the \emph{if} part is based on a construction of a $\Pc$-regularizer, which 
has its roots in an idea of Simonenko \cite{Sim} and can be found in 
\cite{RochFredTh} and \cite{Marko}, for example. The \emph{only if} part was discussed
in \cite{RochFredTh} and \cite[Theorem 2.2.1]{LimOps} (for $1<p<\infty$), in \cite{Marko} 
(all $p$ and with the additional assumption \eqref{CPrae} on the existence of a predual 
setting in the case $p=\infty$), and in \cite{LiChW}, Theorem 6.28 (all $p$).
Our present approach and the new results of Theorems \ref{TPFredh} and \ref{TPDich} 
provide this implication already on the much more general level $\pb{\Xb}$: 

\begin{thm}\label{TLimOps}
Let $A\in\pb{\Xb}$ be $\Pc$-Fredholm. Then all limit operators 
of $A$ are invertible and their inverses are uniformly bounded.
Moreover, the operator spectrum of every $\Pc$-regularizer $B$ equals
\begin{equation}\label{ERegOpSpec}
\sigma_{\op}(B) =(\sigma_{\op}(A))^{-1} := \{A_g^{-1}:A_g\in\sigma_{\op}(A)\}.
\end{equation}
\end{thm}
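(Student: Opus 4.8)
The plan is to fix a $\Pc$-regularizer $B\in\pb{\Xb}$, which exists and lies in $\pb{\Xb}$ by Theorem~\ref{TPFredh}, and to write $AB=I+T_1$, $BA=I+T_2$ with $T_1,T_2\in\pc{\Xb}$. Two elementary observations drive everything. First, each $P_m$ is itself $\Pc$-compact, since $P_mQ_n=Q_nP_m=0$ for $n\ge m$; hence $P_mA_g^{-1}$ and $A_g^{-1}P_m$ will be $\Pc$-compact once $A_g^{-1}\in\pb{\Xb}$ is available (again by Theorem~\ref{TPFredh}). Second, conjugating a $\Pc$-compact operator out to infinity annihilates it $\Pc$-strongly: for $K\in\pc{\Xb}$ and $|g_n|\to\infty$ one has $V_{-g_n}KV_{g_n}\to0$ $\Pc$-strongly, because $\|P_mV_{-g_n}KV_{g_n}\|=\|P_{\ldots+g_n}K\|\le C\|Q_kK\|$ for $n$ large (the shifted box is eventually disjoint from $\{-k,\ldots,k\}^N$), and symmetrically on the right. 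Fixing a limit operator $A_g=\plim_n V_{-g_n}AV_{g_n}$ and abbreviating $D_n:=V_{-g_n}AV_{g_n}$, $C_n:=V_{-g_n}BV_{g_n}$ (so $\|C_n\|=\|B\|$), conjugation of the two regularizer identities gives $D_nC_n=I+V_{-g_n}T_1V_{g_n}$ and $C_nD_n=I+V_{-g_n}T_2V_{g_n}$, whose right-hand sides converge $\Pc$-strongly to $I$.

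From here I would first extract a \emph{uniform lower bound}. For finitely supported $x$ one has $D_nx\to A_gx$ and $C_nD_nx\to x$, whence $C_nA_gx\to x$ and therefore $\|x\|\le\|B\|\,\|A_gx\|$. Since finitely supported vectors are dense in $\Xb$ for $p\in\{0\}\cup[1,\infty)$ — and the case $p=\infty$ is reduced to $l^0$ by Proposition~\ref{Prestrict} — this shows that every $A_g$ is bounded below with the single constant $1/\|B\|$. Consequently, as soon as invertibility is established, $\|A_g^{-1}\|\le\|B\|$ for all $g$ simultaneously, which is exactly the asserted uniform boundedness.

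The crux is to upgrade ``bounded below'' to ``invertible'', and this is where Theorem~\ref{TPDich} does the work that previously forced a predual/duality hypothesis. To see that $A_g$ is Fredholm I would rule out both alternatives in the second part of Theorem~\ref{TPDich}: being bounded below, $A_g$ cannot satisfy $\|A_gQ\|<\epsilon$ for a nonzero projection $Q$; and $\|QA_g\|<\epsilon$ for a finite-rank $Q\in\pc{\Xb}$ is impossible too, since $\|Q(D_n-A_g)\|\to0$ and $\|Q(D_nC_n-I)\|\to0$ force $\|Q\|=\lim_n\|QD_nC_n\|\le\liminf_n\|QD_n\|\,\|B\|\le\epsilon\|B\|$, contradicting $\|Q\|\ge1$ for small $\epsilon$. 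Thus $A_g$ is Fredholm, and the first part of Theorem~\ref{TPDich} supplies finite-rank projections $P,P'\in\pc{\Xb}$ with $\im P=\ker A_g$ and $\ker P'=\im A_g$; injectivity gives $P=0$, while $P'A_g=0$ together with $\|P'(D_n-A_g)\|\to0$ and $\|P'(D_nC_n-I)\|\to0$ forces $\|P'\|=\lim_n\|P'D_nC_n\|=0$, so $\im A_g=\Xb$. Hence $A_g$ is bijective, i.e.\ invertible. This surjectivity step — obtaining triviality of the cokernel purely through $\Pc$-compact projections, with no appeal to adjoints or a predual — is the part I expect to be the main obstacle.

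It remains to identify $\sigma_{\op}(B)$. With $A_g^{-1}\in\pb{\Xb}$ in hand, the identity $C_n-A_g^{-1}=A_g^{-1}\big((A_g-D_n)C_n+V_{-g_n}T_1V_{g_n}\big)$ and its mirror image yield, after multiplying by $P_m$ and using that $P_mA_g^{-1},A_g^{-1}P_m\in\pc{\Xb}$, that $\|P_m(C_n-A_g^{-1})\|\to0$ and $\|(C_n-A_g^{-1})P_m\|\to0$; thus $V_{-g_n}BV_{g_n}\to A_g^{-1}$ $\Pc$-strongly and $A_g^{-1}=B_g\in\sigma_{\op}(B)$, giving $(\sigma_{\op}(A))^{-1}\subseteq\sigma_{\op}(B)$. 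For the reverse inclusion I would exploit the symmetry that $A$ is itself a $\Pc$-regularizer of $B$: applying everything above to $B$ shows that each $B_h\in\sigma_{\op}(B)$ is invertible and that $V_{-h_n}AV_{h_n}\to B_h^{-1}$ $\Pc$-strongly, so $B_h^{-1}=A_h\in\sigma_{\op}(A)$ and hence $B_h\in(\sigma_{\op}(A))^{-1}$. This establishes the claimed equality.
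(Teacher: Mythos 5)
Your proposal is correct and takes essentially the same route as the paper: conjugate the two regularizer identities, note that $\Pc$-compact operators have trivial operator spectrum, use Theorem \ref{TPDich} to upgrade the resulting estimates to invertibility of $A_g$, verify $V_{-g_n}BV_{g_n}\to A_g^{-1}$ $\Pc$-strongly via the same algebraic identity, and get the reverse inclusion in \eqref{ERegOpSpec} by swapping the roles of $A$ and $B$. The only real difference is cosmetic: where you establish the lower bound on vectors (which forces a density argument and the $p=\infty$ reduction via Proposition \ref{Prestrict}), the paper works purely at the operator level, deriving $\|T\|\leq D\|A_gT\|$ and $\|T\|\leq D\|TA_g\|$ for all $T\in\pc{\Xb}$, which feeds into both alternatives of Theorem \ref{TPDich} at once with no case distinction, and it obtains the uniform bound $\|A_g^{-1}\|\leq\|B\|$ from the liminf estimate \eqref{EPLim} rather than from boundedness below.
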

\begin{proof}
Let $B$ be a $\Pc$-regularizer for $A$ and let $g=(g_n)$ be such that $A_g$ exists. It is 
quite obvious from the definition that the operator spectrum of $\Pc$-compact 
operators $K$ is trivial: $\sigma_{\op}(K)=\{0\}$. Thus, besides $V_{-g_n}AV_{g_n}\to A_g$, 
we also have $V_{-g_n}(AB-I)V_{g_n}\to 0$ and $V_{-g_n}(BA-I)V_{g_n}\to 0$ $\Pc$-strongly.
Then, for every $T\in\pc{\Xb}$,
\begin{align*}
\|T\| & = \|V_{-g_n}IV_{g_n}T\| 
			 \leq \|V_{-g_n}BV_{g_n}\|\|V_{-g_n}AV_{g_n}T\|
																				+ \|V_{-g_n}(I-BA)V_{g_n}T\|.
\end{align*}
Consequently (for $n\to\infty$ and with a constant $D>0$ independent of $g$ and $T$)
\[\|T\| \leq D\|A_gT\| \quad\text{for all}\quad T\in\pc{\Xb}.\]
The dual estimate $\|T\| \leq D\|TA_g\|$ for all $T\in\pc{\Xb}$ follows analogously.
Due to Theorem \ref{TPDich}, $A_g$ must be Fredholm with trivial kernel and cokernel,
hence invertible, and Theorem \ref{TPFredh} yields that $A_g^{-1}$ belongs to $\pb{\Xb}$.
Moreover,
\[V_{-g_n}BV_{g_n}-A_g^{-1} 
= V_{-g_n}BV_{g_n}(A_g-V_{-g_n}AV_{g_n})A_g^{-1} + V_{-g_n}(BA-I)V_{g_n}A_g^{-1},\]
hence $\|(V_{-g_n}BV_{g_n}-A_g^{-1})T\|\to 0$ as $n\to\infty$ for every $T\in\pc{\Xb}$.
Analogously we find $\|T(V_{-g_n}BV_{g_n}-A_g^{-1})\|\to 0$ and deduce that
$A_g^{-1}\in\sigma_{\op}(B)$. Thus the inclusion ``$\supset$'' in \eqref{ERegOpSpec}
is proved.

Interchanging the roles of $A$ and $B$, we can apply the above result to the 
$\Pc$-Fredholm operator $B\in\pb{\Xb}$ and its $\Pc$-regularizer $A$, find that every 
operator $B_g\in\sigma_{\op}(B)$ yields a limit operator $A_g=B_g^{-1}$ of $A$, and 
obtain the inclusion ``$\subset$'' in \eqref{ERegOpSpec}.

Finally, by Eq. \eqref{EPLim} in Proposition \ref{PPstrong}, the operators in 
$\sigma_{\op}(B)$ are uniformly bounded, which provides the uniform boundedness 
of the inverses $A_g^{-1}$.
\end{proof}

We point out the following nice properties of the set $\Lc^\$(\Xb,\Pc)$ of rich 
operators which generalize \cite[Proposition 1.2.8]{LimOps}.
\begin{cor}\label{CRichOps} We have
\begin{itemize}
\item The set $\Lc^\$(\Xb,\Pc)$ forms a closed subalgebra of $\pb{\Xb}$ and contains
			$\pc{\Xb}$ as a closed two-sided ideal.
\item Every $\Pc$-regularizer of a rich $\Pc$-Fredholm operator is rich. Thus, 
			$\Lc^\$(\Xb,\Pc)/\pc{\Xb}$ is inverse closed in $\pb{\Xb}/\pc{\Xb}$ and
			$\Lc^\$(\Xb,\Pc)$ is inverse closed in both $\pb{\Xb}$ and $\lb{\Xb}$.
\item For every $A\in\Lc^\$(\Xb,\Pc)$, $\Xb=l^p$, $p\in\{0\}\cup[1,\infty)$, 
			we have $A^*\in\Lc^\$(\Xb^*,\Pc)$ and 
			\begin{equation}
			\sigma_{\op}(A^*)=(\sigma_{\op}(A))^*:=\{A_g^*:A_g\in\sigma_{\op}(A)\}.
			\end{equation}
\end{itemize}
\end{cor}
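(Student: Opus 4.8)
The statement splits into three essentially independent tasks, and I would treat them in the order listed. For the algebraic part of the first item, given rich operators $A,B$ and a sequence $g$ with $|g_n|\to\infty$, the plan is to extract nested subsequences so that along one common subsequence $h$ both $V_{-h_n}AV_{h_n}$ and $V_{-h_n}BV_{h_n}$ converge $\Pc$-strongly. Since the shifts are isometries these sequences are bounded, hence lie in $\Fc(\Xb,\Pc)$, and because passage to the $\Pc$-limit is a unital algebra homomorphism on $\Fc(\Xb,\Pc)$ (Proposition \ref{PPstrong}), together with $V_{h_n}V_{-h_n}=I$, one obtains $(A+B)_h=A_h+B_h$ and $(AB)_h=A_hB_h$, both lying in $\pb{\Xb}$. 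Thus $\Lc^\$$ is a subalgebra. That $\pc{\Xb}\subset\Lc^\$$ is immediate from $\sigma_{\op}(K)=\{0\}$ for $\Pc$-compact $K$ (the limit operator exists, and equals $0$, along \emph{every} such $g$), and the closed two-sided ideal property is then inherited from Proposition \ref{PLP}.

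The delicate point, and what I expect to be the main obstacle, is norm-closedness. Given $A_k\to A$ in norm with each $A_k$ rich and a sequence $g$, I would run a diagonal argument to produce a single subsequence $h$ of $g$ along which every $(A_k)_h$ exists. The norm estimate \eqref{EPLim} yields $\|(A_k)_h-(A_j)_h\|\leq\|A_k-A_j\|$, so $((A_k)_h)_k$ is Cauchy and converges in the closed algebra $\pb{\Xb}$ to some $\tilde A$. It then remains to verify $V_{-h_n}AV_{h_n}\to\tilde A$ $\Pc$-strongly, which I would establish by a three-$\epsilon$ split of $\|(V_{-h_n}AV_{h_n}-\tilde A)P_m\|$ into the contributions of $A-A_k$, of $V_{-h_n}A_kV_{h_n}-(A_k)_h$, and of $(A_k)_h-\tilde A$, choosing $k$ large and then $n$ large; the mirrored estimate on the other side finishes the $\Pc$-strong convergence. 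Carrying out this $3\epsilon$ step cleanly is the genuinely technical heart of the proof.

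The second item falls out of what is already proved. If $A\in\Lc^\$$ is $\Pc$-Fredholm with $\Pc$-regularizer $B$, then the computation in the proof of Theorem \ref{TLimOps} shows that whenever $A_g$ exists, $B_g$ exists as well (and equals $A_g^{-1}$); richness of $A$ therefore forces richness of $B$. Consequently, an invertible coset in $\pb{\Xb}/\pc{\Xb}$ represented by an element of $\Lc^\$$ has its inverse represented by a rich $\Pc$-regularizer, which is inverse closedness of $\Lc^\$/\pc{\Xb}$ in $\pb{\Xb}/\pc{\Xb}$. Combined with the inverse closedness of $\pb{\Xb}$ in $\lb{\Xb}$ from Theorem \ref{TPFredh}, this delivers inverse closedness of $\Lc^\$$ in both $\pb{\Xb}$ and $\lb{\Xb}$.

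For the third item I would first record that, for $p\in\{0\}\cup[1,\infty)$, the dual $\Xb^*$ is again one of the spaces $l^q(\Zb^N,X^*)$ carrying the canonical approximate projection, with $P_n^*=P_n$ and $(V_\alpha)^*=V_{-\alpha}$, so that $(V_{-h_n}AV_{h_n})^*=V_{-h_n}A^*V_{h_n}$. One checks $A^*\in\pb{\Xb^*}$ directly from the characterization \eqref{ELXP}, since taking adjoints swaps its two conditions and preserves norms. Using $\|P_mC\|=\|C^*P_m\|$ and $\|CP_m\|=\|P_mC^*\|$, the two defining estimates for $V_{-h_n}AV_{h_n}\to A_h$ transform verbatim into those for $V_{-h_n}A^*V_{h_n}\to(A_h)^*$; hence $A^*$ is rich and $(\sigma_{\op}(A))^*\subset\sigma_{\op}(A^*)$. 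For the reverse inclusion I would again invoke richness of $A$: any $(A^*)_g$ is, after passing to a subsequence $h$ along which $A_h$ exists, equal to $(A_h)^*$, so it lies in $(\sigma_{\op}(A))^*$. This yields the asserted set equality.
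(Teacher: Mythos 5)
Your proposal is correct and takes essentially the same route as the paper: the second item follows, as you say, from the computation in the proof of Theorem \ref{TLimOps} (whenever $A_g$ exists, any $\Pc$-regularizer has limit operator $A_g^{-1}$ along the same sequence), and the third item is proved exactly via adjoint compatibility ($P_n^*=P_n$, $(V_\alpha)^*=V_{-\alpha}$, Proposition \ref{PLP}) together with richness of $A$ for the reverse inclusion. The first item is dismissed in the paper as ``straightforward calculations,'' and your subalgebra argument via Proposition \ref{PPstrong} and your diagonal-plus-$3\epsilon$ closedness argument are precisely the details being omitted there, so you have simply filled them in correctly.
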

\begin{proof}
The first part only requires some straightforward calculations, and the second
part is an immediate consequence of the previous proof: Every sequence $h$ has a 
subsequence $g$ such that $A_g$, and hence also $B_g=A_g^{-1}$, exists.

Now, let $A\in\Lc^\$(\Xb,\Pc)$. Then, by Proposition \ref{PLP}, $A^*\in\Lc(\Xb^*,\Pc)$ 
and $A_g\in\sigma_{\op}(A)$ always yields $(A_g)^*=(A^*)_g\in\sigma_{\op}(A^*)$. 
In particular, $A^*\in\Lc^\$(\Xb^*,\Pc)$.
Conversely, if $(A^*)_h\in\sigma_{\op}(A^*)$ then, due to the richness of $A$, there 
is a subsequence $g$ of $h$ such that $A_g$ exists, and then necessarily 
$(A_g)^*=(A^*)_g=(A^*)_h$. 
\end{proof}
Notice that the converse implication of Theorem \ref{TLimOps}, which is true for rich 
band-dominated operators by Theorem \ref{TBdORichHS}, does not hold in the general case:
\begin{ex}
Consider the space $l^2(\Zb,\Cb)$, the symbol function 
\[a:\Tb\to\Cb,\quad e^{\ii t}\mapsto -t/\pi+1,\quad t\in[0,2\pi),\] 
and the Toeplitz plus Hankel operator $A=I-\ii \chi_+H(a)\chi_+I$. 
From \cite[2.4.2 (3) and Example 2.2]{HaRoSi} we get that that this operator
is not Fredholm, hence not $\Pc$-Fredholm. However, it is obviously rich and its 
operator spectrum is the singleton $\{I\}$.
\end{ex}

\subparagraph{On the ``big question''}
The ``big question'' in the limit operator business as it is stated in 
\cite[Section 3.9]{Marko}, and also as the Open Problem No. 8 in \cite{LiChW}, is 
the following:
\begin{equation}\label{CBig}
\begin{minipage}{10cm}
Is the operator spectrum of a rich operator automatically uniformly invertible
if it is elementwise invertible?
\end{minipage}
\end{equation}
For this we say that the operator spectrum of an operator $A$ is \textit{elementwise 
invertible} if all limit operators of $A$ are invertible, and \textit{uniformly 
invertible} means that additionally the inverses are uniformly bounded.
Clearly, it would be a great improvement and simplification of Theorem \ref{TBdORichHS}
if this question could be answered affirmatively. 
Currently only the following partial answer is known:

\begin{thm}\label{TInvaIAlp}
Let $p\in\{0,1,\infty\}$ and $A\in\Lc^\$(l^p,\Pc)$. If $\sigma_{\op}(A)$ is 
elementwise invertible then it is uniformly invertible.
\end{thm}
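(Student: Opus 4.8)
The plan is to rephrase the whole statement through the lower norm $\nu(T):=\inf_{\|x\|=1}\|Tx\|$ and to argue by contradiction. For an invertible operator one has $\|T^{-1}\|=1/\nu(T)$, so, given that $\sigma_{\op}(A)$ is elementwise invertible, the inverses are uniformly bounded precisely when $\inf_g\nu(A_g)>0$, the infimum taken over all limit operators $A_g\in\sigma_{\op}(A)$. Thus it suffices to show $\inf_g\nu(A_g)>0$. I first record that the three endpoint cases are tied together and may be reduced to a single geometric situation: for $p=\infty$ Proposition~\ref{Prestrict} transports every lower-norm and kernel computation to the restriction on $l^0$, where finitely supported vectors are dense, and the duality $\sigma_{\op}(A^*)=(\sigma_{\op}(A))^*$ of Corollary~\ref{CRichOps} links $l^0$ with $l^1(\Zb^N,X^*)$. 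Hence I may assume that finite-support test vectors are at my disposal and that the ambient norm is either the $l^1$- or the $l^\infty$-type lattice norm.

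Next I would transfer the defect from the operator spectrum back to $A$ itself, at infinity. If $\inf_g\nu(A_g)=0$ there are limit operators $B_k=A_{g^{(k)}}$ and unit vectors $u_k$ with $\|B_ku_k\|<1/k$. Approximating $u_k$ by $P_{m_k}u_k$ and using that $V_{-g^{(k)}_n}AV_{g^{(k)}_n}\to B_k$ $\Pc$-strongly acts in norm on the fixed finitely supported vector $P_{m_k}u_k$, I can pick a single shift $h^{(k)}$ with $|h^{(k)}|\to\infty$ for which $y_k:=V_{h^{(k)}}P_{m_k}u_k$ (renormalised) is a finitely supported unit vector, supported in a box of half-width $m_k$ centred far out at $h^{(k)}$, and with $\|Ay_k\|\to0$. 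The clean sub-case is that in which the half-widths stay bounded by some fixed $r$. Then the recentred vectors $x_k:=V_{-h^{(k)}}y_k$ all satisfy $x_k=P_rx_k$; passing to a subsequence along which $A_h:=\plim_kV_{-h^{(k)}}AV_{h^{(k)}}$ exists (richness of $A$), the \emph{single fixed} projection $P_r$ turns $\Pc$-strong convergence into the uniform estimate $\|(V_{-h^{(k)}}AV_{h^{(k)}}-A_h)x_k\|\le\|(V_{-h^{(k)}}AV_{h^{(k)}}-A_h)P_r\|\to0$. Consequently $\|A_hx_k\|\to0$ while $\|x_k\|=1$, so $\nu(A_h)=0$ for a limit operator $A_h\in\sigma_{\op}(A)$, contradicting elementwise invertibility. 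Note that no compactness of the unit sphere is invoked, so this works for arbitrary, possibly infinite-dimensional, $X$.

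It remains to remove the standing assumption that the supports of the $y_k$ are of bounded diameter, and this is exactly the step that forces $p\in\{0,1,\infty\}$ and that I expect to be the main obstacle. For a general rich operator $A$ — which need not be band-dominated, so that Theorem~\ref{TBdORichHS} is unavailable — one cannot simply truncate a widely spread near-null vector, because $A$ may carry localized input to delocalized output. The plan is to establish a lower-norm localization lemma: near-null vectors of $A$ at infinity can be rechosen with uniformly bounded support. Here the lattice geometry of the endpoint spaces is indispensable and substitutes for band-dominatedness: the additivity $\|u+v\|_1=\|u\|_1+\|v\|_1$ for disjointly supported summands in $l^1$, and dually $\|u+v\|_\infty=\max\{\|u\|_\infty,\|v\|_\infty\}$ in $l^\infty$ and $l^0$, allow one to split a spread-out near-null vector into disjoint blocks and to isolate a block of bounded size on which the extreme norm still certifies that the vector is approximately annihilated; a further limit-operator extraction then fixes the diameter. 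Because $l^1$ and $l^0,l^\infty$ are dual to one another through Corollary~\ref{CRichOps} and Proposition~\ref{Prestrict}, this localization has to be carried out only once, in the $l^1$-picture, and then dualised. This lemma is precisely where the argument for intermediate $p$ collapses, and where the special structure of $l^1$, $l^0$ and $l^\infty$ is essential.
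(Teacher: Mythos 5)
Your scaffolding is sound and in fact mirrors the structure of the paper's argument: the reduction to lower norms, the transfer of near-null vectors of limit operators back to $A$ at positions marching off to infinity, the extraction of a defective limit operator in the bounded-support-width sub-case (which you carry out correctly and without any compactness of the unit ball of $X$), and the disposal of two of the three endpoint cases by duality --- the paper does exactly this, settling $p=\infty$ first and then getting $p=1$ and $p=0$ from Corollary \ref{CRichOps} and the duality of $l^0$ with $l^1$. But the localization lemma, which you rightly identify as the crux, is precisely where your proposal has a genuine gap, and the mechanism you sketch for it does not work. Cutting a spread-out near-null vector $y$ (with $\|Ay\|<\epsilon$) by a sharp block projection $P_W$ gives $\|A P_W y\|\leq\|Ay\|+\|[A,P_W]y\|$ (equivalently $\|AP_Wy\|\le\|Ay\|+\|AQ_Wy\|$), and for an operator that is merely rich in $\pb{l^p}$ --- not band-dominated --- there is no control whatsoever on such commutators: $[A,P_W]$ can have norm comparable to $2\|A\|$, uniformly in the position and size of the block. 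The lattice identities you invoke ($\|u+v\|_1=\|u\|_1+\|v\|_1$, $\|u+v\|_\infty=\max\{\|u\|_\infty,\|v\|_\infty\}$ for disjoint supports) control the \emph{norm of a block of the vector}, i.e.\ they guarantee that some window retains mass; they say nothing about $\|A(\text{block})\|$, so the extreme norm does not ``certify that the block is approximately annihilated.'' In the classical proofs for band-dominated operators this is exactly the point where band-dominatedness enters: one cuts with slowly varying plateau functions whose commutators with $A$ are small \emph{uniformly with respect to the position of the window}, an estimate that comes from approximating $A$ by band operators.

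The paper closes this hole not by lattice geometry alone but by Proposition \ref{PComm} (from \cite{SeSi3}): for each $A\in\pb{\Xb}$ there is an $A$-adapted uniform approximate projection $(R_n)$ with $\|[A,R_n]\|\to 0$. This is the substitute for band structure; with it, the proof of the $p=\infty$ case in \cite[Section 3.9]{Marko} for rich band-dominated operators goes through verbatim for all of $\Lc^\$(l^\infty,\Pc)$, and that is the paper's entire proof of the hard case, the rest being duality. Your proposal contains no analogue of this ingredient, and without it (or some equivalent commutator/localization input) the block-splitting lemma is not merely unproven but unprovable by the stated method, since nothing prevents a general $A\in\Lc^\$(l^1,\Pc)$ from mapping every bounded block of a spread-out near-null vector to something of norm comparable to $\|A\|$. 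A secondary point: your plan to do the work ``only once, in the $l^1$-picture, and then dualise'' to $l^\infty$ cannot use Corollary \ref{CRichOps} directly, since that duality runs upward ($l^0\to l^1\to l^\infty$) and a general $A\in\pb{l^\infty}$ has no preadjoint on $l^1$ --- that would be the predual-setting hypothesis \eqref{CPrae} the paper is eliminating; one would instead have to go $l^\infty\to l^0$ via Proposition \ref{Prestrict} and then dualise $l^0$, checking along the way that restriction preserves richness, operator spectra and inverse norms. That detour is repairable; the missing commutator control is not.
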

\begin{proof}
Actually, this has been studied and proved for rich band-dominated 
operators so far. A comprehensive survey as well as the respective proof are given in 
\cite[Section 3.9]{Marko} and \cite[Theorem 6.28]{LiChW}. 

Let $p=\infty$. Now, having Proposition \ref{PComm} available and plugging in 
the operators $(R_n)$ which asymptotically commute with $A\in\pb{l^p}$, every step 
of the proof in \cite{Marko} works without taking any properties of band-dominated
operators into account. Besides that replacement there are no further modifications 
of that proof needed, so we omit to repeat its details again.
However, it should be emphasized that this affirmative answer is, in fact, not caused 
by the particular advantages of band-dominated operators, but only by the structure 
of $\Lc^\$(l^\infty,\Pc)$ on these particular spaces $l^\infty$.

If $A\in\Lc^\$(l^1,\Pc)$ has elementwise invertible operator spectrum then, by 
Corollary \ref{CRichOps}, its adjoint $A^*\in\Lc^\$(l^\infty,\Pc)$ has elementwise, 
hence even uniformly, invertible operator spectrum. Since $\|A_g^{-1}\|=\|(A_g^{-1})^*\|$ 
for every $A_g\in\sigma_{\op}(A)$, we get the claim for $p=1$.
The case $l^0$ is treated analogously, using its duality to $l^1$.
\end{proof}

For the cases $p\in(1,\infty)$ this main problem is still open in general, and could 
be answered affirmatively only for some 
special classes of operators, such as those in the Wiener algebra or for 
band-dominated operators with slowly oscillating coefficients (see \cite{LimOps, Marko} 
and below).

\subsection{A first roundup}\label{Sroundup}
We recall the following collection of conditions from \cite[(5.14)]{LiChW}:
\begin{align*}
{(a)}\quad& A\text{ is invertible}\\
{(b)}\quad& A\text{ is Fredholm}\\
{(c)}\quad& A\text{ is $\Pc$-Fredholm}\\
{(d)}\quad& \sigma_{\op}(A)\text{ is uniformly invertible}\\
{(e)}\quad& \sigma_{\op}(A)\text{ is elementwise invertible}\\
{(f)}\quad& \text{All limit operators of $A$ are injective.}
\end{align*}
We have seen that for $A\in\pb{\Xb}$ all implications
$(a)\Rightarrow(b)\Rightarrow(c)\Rightarrow(d)\Rightarrow(e)\Rightarrow(f)$ hold.
Concerning the converse implications we have the following: $(c)\Rightarrow(b)$ holds 
in case of compact $\Pc=(P_n)$, $(d)\Rightarrow(c)$ is the striking advantage of rich
band-dominated operators, which we will study in more detail within the next section.
The implication $(e)\Rightarrow(d)$ is the big question, and particularly true for 
rich operators in the extremal cases $p\in\{0,1,\infty\}$, or for all rich operators in 
the Wiener algebra. The latter will be subject of Section \ref{SWiener}. 
Finally, also $(f)\Rightarrow(e)$ holds true in a surprisingly wide range of cases. 
We will focus on that in Section \ref{SFavard}.

\section{Fredholm theory of band-dominated operators}\label{SBDO}

\subsection{General band-dominated operators}

Let us start this section with mentioning again that all band-dominated operators 
belong to the algebra $\pb{\Xb}$. This is obvious for shifts and operators of 
multiplication and then easily follows for all $A\in\Alp$ since $\pb{\Xb}$ is a 
Banach algebra.

Therefore, the results presented in the previous section suggest to study the
Fredholm property of band-dominated operators with the help of limit operators.
Actually, in the beginning most of the above results had been studied for 
band-dominated operators only. Later on, with the wisdom of hindsight, it turned out 
that large parts of the theory are not specific for $\Alp$ but hold in $\pb{\Xb}$. 
Of course, one striking advantage of rich band-dominated operators is already outlined 
in Theorem \ref{TBdORichHS}, the equivalence of $\Pc$-Fredholmness and the uniform 
invertibility of the operator spectrum. 
Another remarkable speciality in the case $\dim X < \infty$
is the fact that \textit{every} band-dominated operator turns out to be rich 
\cite[Corollary 2.1.17]{LimOps}. The following collection of results shall underline 
that $\Alp$ is self-contained and closed with respect to the terms of the $\Pc$-theory.

\begin{thm} \label{TBDO}
(\cite[Prop 2.1.7. et seq.]{LimOps} or \cite[Propositions 1.46, 2.9, 2.11, 3.6]{Marko})\\
For $\Alp$ and $A\in\Alp$ we have the following.
\begin{itemize}
\item The algebra $\Alp$ contains $\pc{\Xb}$ as a closed two-sided ideal. 
\item The limit operators of band-dominated operators always 
			belong to $\Alp$ again, i.e. $\sigma_{\op}(A)\subset\Alp$.
\item If $A$ is invertible then $A^{-1}\in\Alp$. Thus $\Alp$ 
			is inverse closed in $\lb{\Xb}$ and $\pb{\Xb}$.
\item If $A$ is $\Pc$-Fredholm then every $\Pc$-regularizer of $A$ is 
			band-dominated as well.
\end{itemize}
\end{thm}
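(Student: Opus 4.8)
The first bullet I would handle separately, as it does not need the machinery used for the others. For $K\in\pc{\Xb}$ one has $P_nKP_n\to K$ in norm: combine $\|P_nK-K\|\to 0$ and $\|KP_n-K\|\to 0$ with $\sup_n\|P_n\|<\infty$. Each $P_nKP_n$ is supported on the finite window $\{-n,\dots,n\}^N$, hence is a finite sum of operators of the form $V_{i-j}\,d_j$ with $d_j$ a one-point multiplication (the $\alpha=0$ term of a band operator), so $P_nKP_n\in\Bc$ and therefore $K\in\overline{\Bc}=\Alp$. That $\pc{\Xb}$ is a \emph{closed two-sided ideal} of $\Alp$ is then inherited from Proposition \ref{PLP}, since $\Alp\subseteq\pb{\Xb}$ and $\pc{\Xb}$ is a closed ideal there.

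The remaining three bullets all rest on one characterization, which I would isolate as a lemma. Let $a_\mu$ ($\mu\in\Tb^N$) denote the isometric multiplication operator by the sequence $(e^{i\langle\mu,j\rangle})_j$. A direct computation on generators gives $a_\mu(bI)a_{-\mu}=bI$ and $a_\mu V_\alpha a_{-\mu}=e^{i\langle\mu,\alpha\rangle}V_\alpha$, so for a band operator $A=\sum_\alpha b_\alpha V_\alpha$ the orbit $\mu\mapsto a_\mu A a_{-\mu}=\sum_\alpha e^{i\langle\mu,\alpha\rangle}b_\alpha V_\alpha$ is trigonometric-polynomial-valued, hence norm-continuous; passing to norm limits, the orbit of any $A\in\Alp$ is norm-continuous. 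I claim the converse also holds: if $\mu\mapsto a_\mu A a_{-\mu}$ is norm-continuous, then $A\in\Alp$. This is the one genuinely technical point and the main obstacle. One recovers the band coefficients by the Bochner integrals
\[b_\alpha:=\int_{\Tb^N}e^{-i\langle\mu,\alpha\rangle}\,a_\mu A a_{-\mu}\,d\mu\]
and shows that the Fejér (Ces\`aro) means of $\sum_\alpha b_\alpha V_\alpha$ are band operators converging to $A$ in norm. Some care is needed for $p=\infty$, where the integral must be read in a sense compatible with $\pb{\Xb}$.

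Granting the lemma, the second bullet follows by commuting the two group actions. Since $a_\mu$ commutes with every $P_U$ and $V_{-g_n}a_{\pm\mu}V_{g_n}=e^{\pm i\langle\mu,g_n\rangle}a_{\pm\mu}$, the phases cancel, giving $V_{-g_n}(a_\mu A a_{-\mu})V_{g_n}=a_\mu(V_{-g_n}AV_{g_n})a_{-\mu}$; as conjugation by $a_\mu$ commutes with $\plim$ (because $a_{\pm\mu}\pc{\Xb}\subseteq\pc{\Xb}$), whenever $A_g$ exists we get $(a_\mu A a_{-\mu})_g=a_\mu A_g a_{-\mu}$. Hence $a_\mu A_g a_{-\mu}-a_{\mu'}A_g a_{-\mu'}$ is the limit operator of $a_\mu A a_{-\mu}-a_{\mu'}A a_{-\mu'}$, and \eqref{EPLim} bounds its norm by $\|a_\mu A a_{-\mu}-a_{\mu'}A a_{-\mu'}\|$. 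Continuity of the orbit of $A$ thus transfers to that of $A_g$, so $A_g\in\Alp$ (note: richness is not needed, only the standing hypothesis that $A_g$ exists). For the third bullet, if $A\in\Alp$ is invertible in $\lb{\Xb}$ then $a_\mu A^{-1}a_{-\mu}=(a_\mu A a_{-\mu})^{-1}$, and since inversion is norm-continuous on the invertibles, the orbit of $A^{-1}$ is continuous, whence $A^{-1}\in\Alp$; inverse closedness in $\pb{\Xb}$ is then immediate, as $\pb{\Xb}$ is inverse closed in $\lb{\Xb}$ by Theorem \ref{TPFredh}.

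Finally, for the fourth bullet I would run the lemma modulo $\pc{\Xb}$. By the first bullet $\pc{\Xb}\subseteq\Alp$, so $\Alp/\pc{\Xb}$ is a closed subalgebra of $\pb{\Xb}/\pc{\Xb}$, and the conjugation action descends to the quotient (again using $a_{\pm\mu}\pc{\Xb}\subseteq\pc{\Xb}$). The same averaging argument, now valued in the Banach algebra $\pb{\Xb}/\pc{\Xb}$, characterizes $\Alp/\pc{\Xb}$ as the cosets whose orbit is norm-continuous. If $A\in\Alp$ is $\Pc$-Fredholm, then $A+\pc{\Xb}$ is invertible in $\pb{\Xb}/\pc{\Xb}$ with inverse $B+\pc{\Xb}$ for any $\Pc$-regularizer $B$ (Theorem \ref{TPFredh} placing $B\in\pb{\Xb}$); continuity of the orbit of $A+\pc{\Xb}$ together with continuity of inversion yields continuity of the orbit of $B+\pc{\Xb}$, whence $B+\pc{\Xb}\in\Alp/\pc{\Xb}$, i.e.\ $B\in\Alp+\pc{\Xb}=\Alp$.
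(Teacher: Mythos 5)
The paper itself gives no proof of Theorem \ref{TBDO} --- it simply cites \cite{LimOps} and \cite{Marko} --- and your overall route (characterizing $\Alp$ by norm-continuity of the orbit $\mu\mapsto a_\mu A a_{-\mu}$ and then exploiting continuity of inversion) is precisely the classical approach of those sources; your first bullet is correct, and the reductions of the second and third bullets to the key lemma are sound. The genuine problem is the lemma itself in the case $p=\infty$, which is exactly the case this survey exists to repair. As stated, for arbitrary $A\in\lb{\Xb}$, it is \emph{false} on $l^\infty$: let $\Uc$ be a free ultrafilter on $\Zb$ containing every Bohr set $\{k\in\Zb:|e^{i\nu_j k}-1|<\epsilon,\ j=1,\dots,r\}$ (Bohr sets are infinite and closed under finite intersection, so together with the cofinite sets they extend to a free ultrafilter), and define $T$ on $l^\infty(\Zb,\Cb)$ by $(Tx)_j:=\lim_{\Uc}x_{k+j}$. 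Since each set $\{k:|e^{i\mu k}-1|<\epsilon\}$ belongs to $\Uc$, a two-line computation gives $a_\mu T a_{-\mu}=T$ for every $\mu$, a constant --- hence norm-continuous --- orbit; yet $T$ kills finitely supported sequences, so $TQ_n=T$ and $\|P_0TQ_n\|=\|P_0T\|=1$ for all $n$, whence $T\notin\pb{\Xb}\supset\Alp$ by \eqref{ELXP}. The reason your averaging argument cannot exclude this: on $l^\infty$ an operator is not determined by its matrix, so knowing that $b_\alpha$ has matrix supported on the $\alpha$-th diagonal does not identify it with a band operator $c_\alpha V_\alpha$ (that identification is fine for $p\in\{0\}\cup[1,\infty)$, where finitely supported sequences are dense). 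The lemma survives for $p=\infty$ only under the additional hypothesis $A\in\pb{\Xb}$, and then needs an extra step you have not supplied --- for instance, restrict $b_\alpha V_{-\alpha}$ to $l^0$, identify it there as a multiplication operator, and lift back using $\|C\|=\|C|_{l^0}\|$ from Proposition \ref{Prestrict}. Correspondingly, in the third bullet you must invoke Theorem \ref{TPFredh} to place $A^{-1}$ in $\pb{\Xb}$ \emph{before} applying the lemma, not afterwards; in the second bullet $A_g\in\pb{\Xb}$ holds automatically, so that application is fine once the lemma is repaired.

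The fourth bullet has a second, independent gap: the assertion that ``the same averaging argument, now valued in $\pb{\Xb}/\pc{\Xb}$, characterizes $\Alp/\pc{\Xb}$ as the cosets whose orbit is norm-continuous'' is not the same argument and is not proved. In the quotient the Fourier coefficients are cosets: they have no well-defined matrix, and one cannot lift the quotient-valued Bochner integral to an operator-valued one, because a lift of the orbit of $B$ is exactly what is not known to be norm-continuous (or even measurable). Fortunately the detour through the quotient is unnecessary: the orbit of a $\Pc$-regularizer is norm-continuous outright. Indeed, every $K\in\pc{\Xb}$ has norm-continuous orbit (split $K=KP_m+KQ_m$ and use $\|(a_{-\mu}-I)P_m\|\to0$ and $\|KQ_m\|\to0$, similarly on the left), and writing $C_\mu:=a_\mu Ca_{-\mu}$ for any operator $C$, the identities $AB=I+K_1$ and $BA=I+K_2$ yield
\[B_\mu-B \;=\; K_{2,\mu}B \;-\; B_\mu K_1 \;+\; B_\mu(A-A_\mu)B.\]
As $\mu\to0$ the last term is $O(\|A-A_\mu\|)\to0$ since $A\in\Alp$, while $K_{2,\mu}B\to K_2B$ and $B_\mu K_1=a_\mu B K_{1,-\mu}a_{-\mu}\to BK_1$ (using norm-continuity of the orbits of $K_1$ and of $BK_1\in\pc{\Xb}$); since $K_2B-BK_1=(BA-I)B-B(AB-I)=0$, the orbit of $B$ is continuous at $0$, hence everywhere by the group property. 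Now the ($\pb{\Xb}$-restricted) lemma applies directly to $B$, which lies in $\pb{\Xb}$ by Theorem \ref{TPFredh}, and gives $B\in\Alp$.
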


The following observations which additionally address the usual Fredholm property are 
probably new.
\begin{thm}\label{TBDO2}
Let $A\in\Alp$.
\begin{itemize}
\item If $A$ is Fredholm then there exists a regularizer which belongs to $\Alp$.
			More precisely, there is even a generalized inverse $B\in\Alp$ for $A$,
			which means that $ABA=A$, $BAB=B$ holds and $I-AB$, $I-BA$ are both compact and 
			$\Pc$-compact projections. 
\item If $A$ is invertible, $\Pc$-Fredholm or Fredholm and $B$ is an inverse, 
			$\Pc$-regularizer or Fredholm regularizer in $\Alp$ for $A$, respectively, 
			then $\sigma_{\op}(B)=(\sigma_{\op}(A))^{-1}$. Moreover, $A$ 
			is rich if and only if $B$ is rich.
\end{itemize}
\end{thm}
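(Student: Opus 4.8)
The plan is to lean on the $\pb{\Xb}$-results already in hand and to bring in band-dominatedness only where it is genuinely needed. For the first assertion, note that $\Alp\subset\pb{\Xb}$, so Corollary \ref{CFredh} applies to a Fredholm $A\in\Alp$: it is $\Pc$-Fredholm and admits a generalized inverse $B\in\pb{\Xb}$ with $ABA=A$, $BAB=B$ and $I-BA=P$, $I-AB=P'$ the finite-rank $\Pc$-compact projections supplied by Theorem \ref{TPDich}. Being of finite rank, $P$ and $P'$ are compact as well, so the two remainders are simultaneously compact and $\Pc$-compact projections. Since $AB-I,BA-I\in\pc{\Xb}$, the operator $B$ is a $\Pc$-regularizer of the $\Pc$-Fredholm operator $A$, and the fourth item of Theorem \ref{TBDO} upgrades this to $B\in\Alp$, which is exactly the first bullet.

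For the second bullet I would reduce all three situations to the single statement that $B\in\Alp$ is a $\Pc$-regularizer of the $\Pc$-Fredholm operator $A\in\Alp$, and then invoke Theorem \ref{TLimOps}. If $A$ is invertible then $B=A^{-1}\in\Alp$ by Theorem \ref{TBDO}, and $AB-I=BA-I=0$, so $B$ is trivially such a regularizer; the $\Pc$-Fredholm case is the hypothesis itself. The only case requiring work is the Fredholm one, where a Fredholm regularizer $B\in\Alp$ only comes with $AB-I,BA-I\in\lc{\Xb}$, and I must promote these compact remainders to $\Pc$-compact ones.

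This is the heart of the matter, which I would isolate as the lemma: every compact operator in $\Alp$ is $\Pc$-compact (it applies here since $AB-I,BA-I\in\Alp$). For $1<p<\infty$ it is routine, because for a compact $C$ one has $\|Q_nC\|\to 0$ (as $Q_n\to 0$ strongly on $l^p$ and $C$ is compact) and $\|CQ_n\|=\|Q_nC^*\|\to 0$ (as $Q_n\to 0$ strongly on $l^{p'}$ and $C^*$ is compact), which is precisely $\Pc$-compactness. The endpoints are the genuine obstacle and are exactly where the machinery of this survey pays off. For $p=\infty$ I would use the $l^0$-restriction of Proposition \ref{Prestrict}: a compact $C\in\Alp\subset\pb{l^\infty}$ maps $l^0$ into $l^0$ with compact restriction $C|_{l^0}$, and on $l^0$, whose dual is $l^1(X^*)$, the projections $Q_n$ tend strongly to $0$ both on the space and on its dual, so $C|_{l^0}$ is $\Pc$-compact; since $Q_nC,CQ_n\in\pb{l^\infty}$ and $\|T\|=\|T|_{l^0}\|$ there, also $\|Q_nC\|,\|CQ_n\|\to 0$. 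The cases $p=1$ and $p=0$ then follow by duality: the adjoint of a compact band-dominated operator is compact and, since adjoints of band operators are band operators, band-dominated on the dual space, so $\Pc$-compactness on $l^1$ and on $l^0$ reduces through $\|CQ_n\|=\|Q_nC^*\|$ to the $l^\infty$- and $l^1$-cases just handled.

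Once $B$ is recognized as a $\Pc$-regularizer of the $\Pc$-Fredholm operator $A$ in every case, Theorem \ref{TLimOps} delivers $\sigma_{\op}(B)=(\sigma_{\op}(A))^{-1}$ at once. For the final claim, $A$ and $B$ are then mutual $\Pc$-regularizers and both $\Pc$-Fredholm, so the second item of Corollary \ref{CRichOps} applies in both directions and yields that $A$ is rich if and only if $B$ is rich. The one real difficulty is the endpoint part of the lemma; everything else is bookkeeping layered on Theorems \ref{TLimOps} and \ref{TBDO} and Corollary \ref{CRichOps}.
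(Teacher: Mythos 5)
Your proof is correct, and its skeleton is the same as the paper's: the first bullet via Corollary \ref{CFredh} plus the last item of Theorem \ref{TBDO}, the second bullet via Theorem \ref{TLimOps} and Corollary \ref{CRichOps}. The genuine difference is that you make explicit a reduction which the paper's two-line proof leaves unstated. Theorem \ref{TLimOps} and Corollary \ref{CRichOps} concern $\Pc$-regularizers only, so the invertible and $\Pc$-Fredholm cases are indeed immediate; but a Fredholm regularizer $B\in\Alp$ comes only with \emph{compact} remainders $AB-I$, $BA-I$, and one needs these to be $\Pc$-compact (or at least to have trivial operator spectrum) before those results say anything about $\sigma_{\op}(B)$. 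Your bridging lemma --- every compact operator in $\Alp$ is $\Pc$-compact --- is precisely the missing step, and your proof of it is sound: for $p\in\{0\}\cup(1,\infty)$ it needs no band-dominatedness at all, since $Q_n\to 0$ strongly on $l^p$ and $Q_n^*\to 0$ strongly on the dual $l^{p'}(\Zb^N,X^*)$; at the endpoints, where exactly this fails, you correctly substitute membership in $\pb{l^\infty}$, using Proposition \ref{Prestrict} to pass to the restriction onto $l^0$ (where the classical argument applies) and back via $\|T\|=\|T|_{l^0}\|$, and then handle $p=1$ and $p=0$ by duality, noting that adjoints of band-dominated operators are band-dominated on the dual space. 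What the paper's terseness buys is brevity; what your version buys is an actually checkable argument for the Fredholm-regularizer case, plus a lemma of independent interest (it shows $\lc{l^p}\cap\Alp\subset\pc{l^p}$ for all $p\in\{0\}\cup[1,\infty]$, so in $\Alp$ the notions of Fredholm regularizer and $\Pc$-regularizer coincide). One could get away with slightly less --- it suffices that compact operators in $\Alp$ have trivial operator spectrum, so that $B$ and the $\Pc$-regularizer from Corollary \ref{CFredh} have the same limit operators --- but that weaker statement requires essentially the same endpoint analysis, so nothing is lost by proving the stronger lemma.
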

\begin{proof}
The first assertion is a consequence of Corollary \ref{CFredh} together with Theorem
\ref{TBDO}. The second assertion follows from Theorem \ref{TLimOps} and Corollary \ref{CRichOps}.
\end{proof}

\subparagraph{The case \texorpdfstring{$N=1$}{N=1} and the Fredholm index}
We now suppose that $N=1$ and our aim is to state index formulas for 
operators in terms of their limit operators.

We already know by Corollary \ref{CFredh} that a Fredholm operator $A\in\Alp$ has 
the $\Pc$-Fredholm property and hence, due to Theorem \ref{TLimOps}, its limit 
operators are invertible.  Rabinovich, Roch and Roe \cite{RochFredInd} proposed to 
consider the following compressions of the limit operators:
Let $P$ stand for the projection $P:=P_{\Nb}$ and set $Q:=I-P$. For a given operator
$A\in\Alp$ define 
\[A_+:=PAP+Q \quad\text{and}\quad A_-:=QAQ+P\]
and further introduce the plus-(and minus-)index of $A$ as $\ind^{\pm} A := \ind A_{\pm}$.
Finally let $\sigma_{\pm}(A)$ denote the set of all limit operators $A_g$ of $A$ w.r.t.
sequences $g$ tending to $\pm\infty$, respectively. Then the main observations of 
\cite{RochFredInd} for Fredholm operators $A\in\Alp$ in the case $\dim X < \infty$ and
$p=2$ have been the formulas 
\[\ind A=\ind_+A + \ind_-A\quad\text{and}\quad \ind_\pm A=\ind_\pm A_g\] 
for arbitrary limit operators $A_g\in\sigma_\pm(A)$, respectively.
This result was generalized to the case $\dim X < \infty$ and $p\in(1,\infty)$ in
\cite{Rochlp}, and later on in \cite{LpInd} to Banach spaces $X$ with a certain 
additional property, the \textit{symmetric approximation property}, and for a 
particular class of operators of the form $A=I+K$ with rich $K\in\Alp$ having 
compact matrix entries. The most general version of such an index formula, without 
any restrictions on $X$ and for arbitrary band-dominated operators, has been proved 
in \cite[Theorem 3.7]{SeSi3} and, in the present notation, reads as follows:
\begin{thm}\label{TIndex}
Let $A\in\Alp$ be Fredholm. Further, let $(u_n), (l_n)\subset \Zb$ be sequences 
tending monotonically to $+\infty$ or $-\infty$, respectively, such that 
$A_u\in\sigma_+(A)$ and $A_l\in\sigma_-(A)$ exist. Finally set 
$L_n:=P_{\{l_n,\ldots,u_n\}}$. If $(A_u)_+$ and $(A_l)_-$ are Fredholm operators 
then there is a number $n_0\in\Nb$ such that for $n\geq n_0$ the operators 
$L_nAL_n \in\lb{\im L_n}$ are Fredholm as well and 
\begin{equation}\label{EIndExt}
\ind A = \ind L_nAL_n + \ind_+(A_u) + \ind_-(A_l).
\end{equation}
\end{thm}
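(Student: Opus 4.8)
The plan is to cut $\Zb$ into a left tail $\{j<l_n\}$, the central window $\{l_n,\dots,u_n\}$ and a right tail $\{j>u_n\}$, to compare $A$ with a \emph{decoupled} block-diagonal operator built from $L_nAL_n$ and from shifted copies of the two limit operators, and then to show that this comparison preserves the Fredholm index. Write $P^{-}_n:=P_{\{j\in\Zb:\,j<l_n\}}$ and $P^{+}_n:=P_{\{j\in\Zb:\,j>u_n\}}$, so that $P^{-}_n+L_n+P^{+}_n=I$, and abbreviate the shifted limit operators $A^{+}:=V_{u_n}A_uV_{-u_n}$ and $A^{-}:=V_{l_n}A_lV_{-l_n}$.

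First I would collect the consequences of Fredholmness. By Corollary \ref{CFredh} the operator $A$ is $\Pc$-Fredholm, so Theorem \ref{TLimOps} shows that all its limit operators, in particular $A_u$ and $A_l$, are invertible, and by Theorem \ref{TBDO} their inverses are again band-dominated; moreover Theorem \ref{TBDO2} supplies a generalized inverse $B\in\Alp$ of $A$. This invertibility, together with the hypotheses that the half-line compressions $(A_u)_+$ and $(A_l)_-$ are Fredholm, is exactly the data needed to control the finite section $L_nAL_n$. Indeed, since $V_{-u_n}AV_{u_n}\to A_u$ $\Pc$-strongly, the behaviour of $L_nAL_n$ near its right endpoint is asymptotically that of $(A_u)_+$, near its left endpoint that of $(A_l)_-$, and in the deep interior that of the invertible-at-infinity operator $A$. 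Assembling a one-sided $\Pc$-regularizer on $\im L_n$ from $B$ (interior) and from the Fredholm regularizers of $(A_u)_+$ and $(A_l)_-$ (the two endpoints), and letting the cuts recede, produces an $n_0$ such that $L_nAL_n\in\lb{\im L_n}$ is Fredholm for all $n\ge n_0$.

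Next I would introduce the decoupled operator
\[
D_n:=P^{-}_nA^{-}P^{-}_n+L_nAL_n+P^{+}_nA^{+}P^{+}_n,
\]
which is block diagonal for the splitting $\im P^{-}_n\oplus\im L_n\oplus\im P^{+}_n$. Its central block is Fredholm by the previous step. Conjugating the right block by $V_{-u_n}$ turns $P^{+}_nA^{+}P^{+}_n$ into the half-line compression $PA_uP$ on $\im P$, and conjugating the left block by $V_{-l_n}$ turns $P^{-}_nA^{-}P^{-}_n$ into a half-line compression of $A_l$; by shift invariance of the index — the single boundary lattice point being accounted for consistently — these two blocks have indices $\ind_+(A_u)$ and $\ind_-(A_l)$. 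Hence all three blocks are Fredholm and
\[
\ind D_n=\ind_-(A_l)+\ind L_nAL_n+\ind_+(A_u),
\]
so that \eqref{EIndExt} is equivalent to the single identity $\ind A=\ind D_n$.

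The hard part will be precisely this identity $\ind A=\ind D_n$. The obstruction is that $A$ and $D_n$ differ by a perturbation that is \emph{not} compact: passing from $A$ to $D_n$ severs the couplings $P^{\mp}_nA(I-P^{\mp}_n)$ between the three regions \emph{and} overwrites the two tails by the limit operators $A^{\pm}$, and neither change has finite rank nor is compact, so perturbation invariance of the index is unavailable. The route I would take is to argue that the genuine Fredholm data of $A$ localizes for large $n$. By Theorem \ref{TPDich} the kernel and cokernel of $A$ are ranges of finite-rank $\Pc$-compact projections; the claim to be proved is that, as $n\to\infty$, these finite-dimensional defect spaces split into a central part captured by $L_nAL_n$, a right-tail part matched (through $A^{+}$) to the defects of $(A_u)_+$, and a left-tail part matched (through $A^{-}$) to those of $(A_l)_-$. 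Here the invertibility of $A_u$ and $A_l$ is decisive: it forces the tail contributions to reduce to exactly the half-line defects, so that overwriting the tails by $A^{\pm}$ alters neither $\dim\ker$ nor $\dim\coker$ once $n$ is large. Equivalently, one must replace the naive (and generally non-Fredholm) straight-line homotopy from $A$ to $D_n$ by a deformation that stays within operators which are invertible at infinity with uniformly invertible operator spectrum; establishing this localization is the technical core, after which the index formula follows from the additivity of the index over the three decoupled blocks.
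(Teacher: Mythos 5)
Your proposal is not a proof but a proof plan whose decisive step is deferred, and the deferral happens exactly where the theorem lives. Note first that the paper contains no internal proof of Theorem \ref{TIndex} to compare against: it is quoted from \cite[Theorem 3.7]{SeSi3}, where it is established inside the Banach-algebra framework of operator sequences --- one shows that $(L_nAL_n)$ is a \emph{Fredholm sequence} whose ``snapshots'' are $A$, $(A_u)_+$ and $(A_l)_-$, and the splitting property of such sequences (the kernel and cokernel dimensions of $L_nAL_n$ eventually decompose into the contributions of the snapshots) yields both the Fredholmness of $L_nAL_n$ for large $n$ and formula \eqref{EIndExt}. That splitting property is precisely the ``localization of defect spaces'' which you describe in your final paragraph and then announce as ``the claim to be proved'' and ``the technical core''. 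Everything before that announcement is bookkeeping; the deferred claim \emph{is} the theorem, so your reduction $\ind A=\ind D_n$ is not a step towards \eqref{EIndExt}, it is a restatement of it, and no index-stability principle you invoke (compact perturbation, homotopy) applies to the passage from $A$ to $D_n$, as you yourself observe.

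Two of the steps you do treat as routine would moreover fail as written when $\dim X=\infty$. First, ``assembling a one-sided $\Pc$-regularizer on $\im L_n$'' is vacuous: on $\im L_n$ the projections $P_m$ act as the identity for all large $m$, so \emph{every} bounded operator on $\im L_n$ is $\Pc$-compact in the induced sense, and invertibility at infinity on $\im L_n$ carries no information. Fredholmness of $L_nAL_n$ means invertibility modulo genuinely compact operators on $\im L_n$, and the errors produced by gluing $B$ with regularizers of $(A_u)_+$ and $(A_l)_-$ are cut-off commutators localized at the two cut sites --- operators which are $\Pc$-compact but in general not compact, since a single site already carries a full copy of $X$. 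Second, the parenthetical ``the single boundary lattice point being accounted for consistently'' hides a genuine issue: after shift conjugation your left block is the compression of $A_l$ to $\{j\le -1\}$, whereas $\ind_-(A_l)$ is defined through the complementary cut determined by $P_{\Nb}$, and for $\dim X=\infty$ moving a half-line cut by one site perturbs the compression by a $\Pc$-compact, generally non-compact operator whose effect on the index need not vanish: it can shift the index by the index of the corner entry $P_{\{0\}}A_lP_{\{0\}}$, which can be nonzero even for an invertible band operator (take the diagonal $2\times 2$ block built from a unilateral shift $S$ on $X$ and its adjoint, with corner $I-SS^*$, placed at two adjacent sites). So even the identification of the tail-block indices with $\ind_\pm$ requires an argument. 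These are exactly the obstructions that force the heavier sequence-algebra machinery of \cite{SeSi3}; without it, or a substitute for it, your outline cannot be completed as it stands.
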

Notice that in all cases which have been studied before (i.e. for $\dim X<\infty$ or 
the operators with compact entries of \cite{LpInd}), the Fredholm property of $A$ 
automatically implies the Fredholm property of $(A_u)_+$ and $(A_l)_-$, and the 
compressions $L_nAL_n$ always have index zero. This again leads to the simplified 
formula
\begin{equation}\label{EInd}
\ind A = \ind_+(A_u) + \ind_-(A_l).
\end{equation}
It should also be mentioned that Theorem \ref{TIndex} even holds for the more 
general quasi-banded operators which will be discussed in our final Section \ref{SGen}.

\subsection{Operators in the Wiener algebra}\label{SWiener}

Recall the Wiener algebra $\Wc$ which is defined as the closure of the algebra $\Bc$ 
of all band operators $A=\sum_\alpha a_\alpha V_\alpha$ with respect to the norm
\[\|A\|_{\Wc}:=\sum_\alpha\|a_\alpha\|_\infty.\]
Before we come to the proof of Theorem \ref{TWiener2} let us shortly summarize
to what extend the previous observations for $\Alp$ specify to $\Wc$. 
From \cite[Theorem 2.5.2]{LimOps} we know that, for every $p\in\{0\}\cup [1,\infty]$, 
$\Wc$ is inverse closed in $\lb{l^p}$, hence also in $\pb{l^p}$ and $\Alp$. Moreover, 
by \cite[Propositions 2.5.1 and 2.5.6]{LimOps}, the property of the operators 
$A\in\Wc$ to be rich does not depend on $p$, the operator spectrum of rich operators 
in the Wiener algebra is independent from $p$ as well, and it is always a subset of 
$\Wc$. This leads to the great advantage of this class that one can extend the  
affirmative answer to the big question, via interpolation, to every $p$ 
(cf. \cite{LangeR}, \cite[Theorem 2.5.7]{LimOps} and \cite[Theorem 6.40]{LiChW}): 
\begin{thm}\label{TPWiener}
Let $A\in\Wc$ be rich. Then the following are equivalent
\begin{enumerate}
\item $A$ is $\Pc$-Fredholm on one of the spaces $l^p$.
\item $A$ is $\Pc$-Fredholm on all the spaces $l^p$.
\item All limit operators of $A$ are invertible on one of the spaces $l^p$.
\item All limit operators of $A$ are invertible on all the spaces $l^p$ and
 			\begin{equation}\label{EUniBdd}
 			\sup_{p\in \{0\}\cup[1,\infty]}\sup_{A_g\in\sigma_{\op}(A)}\|A_g^{-1}\|_{\lb{l^p}}< \infty.
 			\end{equation}
\end{enumerate}
\end{thm}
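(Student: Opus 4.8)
The plan is to prove Theorem \ref{TPWiener} by combining the $p$-independence results for the Wiener algebra quoted from \cite{LimOps} with the general $\Pc$-theory developed above, and then to run a complex interpolation argument to upgrade elementwise invertibility to \emph{uniform} invertibility across all $p$. First I would dispose of the easy equivalences. By Theorem \ref{TBdORichHS}, for a fixed $p$ the statement ``$A$ is $\Pc$-Fredholm on $l^p$'' is equivalent to ``all limit operators of $A$ are invertible on $l^p$ with uniformly bounded inverses.'' Since the operator spectrum $\sigma_{\op}(A)$ of a rich Wiener operator is independent of $p$ and is itself contained in $\Wc$ (by \cite[Propositions 2.5.1, 2.5.6]{LimOps}), and since $\Wc$ is inverse closed in every $\lb{l^p}$ (by \cite[Theorem 2.5.2]{LimOps}), each individual inverse $A_g^{-1}$ is a single element of $\Wc$ whose existence does not depend on $p$. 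This immediately gives the equivalence of (4) with (1) and (2) via Theorem \ref{TBdORichHS}, and shows that (4) $\Rightarrow$ (3) $\Rightarrow$ (elementwise invertibility on every $p$) are trivial. The whole content therefore collapses to a single implication: elementwise invertibility of $\sigma_{\op}(A)$ on one space (equivalently, on all spaces) forces the uniform bound \eqref{EUniBdd}.

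Next I would set up the interpolation. The key observation is that each $A_g\in\sigma_{\op}(A)$ lies in $\Wc$ with a fixed Wiener norm $\|A_g\|_\Wc\le\|A\|_\Wc$, and because $\sigma_{\op}(A)\subset\Wc$ is inverse closed, the inverse $A_g^{-1}$ is again a Wiener operator acting simultaneously on every $l^p$, with one and the same family of diagonals. The norm $\|A_g^{-1}\|_{\lb{l^p}}$ is then controlled by the Riesz--Thorin interpolation of the operator $A_g^{-1}$ between the endpoint spaces: for any $p\in(1,\infty)$ one has $\|A_g^{-1}\|_{\lb{l^p}}\le\|A_g^{-1}\|_{\lb{l^1}}^{1-\theta}\,\|A_g^{-1}\|_{\lb{l^\infty}}^{\theta}$ with $1/p=1-\theta$, and the cases $p\in\{0\}$ follow from the $l^\infty$ estimate by restriction. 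Thus it suffices to control the two endpoints $p=1$ and $p=\infty$ uniformly over $g$, after which the intermediate $p$ are free.

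The endpoints are exactly where Theorem \ref{TInvaIAlp} does its work: for $p\in\{0,1,\infty\}$, elementwise invertibility of the operator spectrum of a rich operator already implies \emph{uniform} invertibility. Applying this at $p=1$ and at $p=\infty$ yields constants $C_1,C_\infty$ with $\sup_{A_g}\|A_g^{-1}\|_{\lb{l^1}}\le C_1$ and $\sup_{A_g}\|A_g^{-1}\|_{\lb{l^\infty}}\le C_\infty$. Feeding these into the interpolation inequality produces the $p$-uniform bound
\[
\sup_{p\in\{0\}\cup[1,\infty]}\ \sup_{A_g\in\sigma_{\op}(A)}\|A_g^{-1}\|_{\lb{l^p}}\ \le\ \max\{C_1,\,C_\infty,\,C_1^{1-\theta}C_\infty^{\theta}\}\ <\ \infty,
\]
which is precisely \eqref{EUniBdd}; this closes the loop and establishes (3) $\Rightarrow$ (4), hence the full equivalence.

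The step I expect to be the main obstacle is not the interpolation arithmetic but the justification that a \emph{single} inverse operator $A_g^{-1}$ may legitimately be regarded as one object acting on all the $l^p$ simultaneously, so that Riesz--Thorin applies to it. This requires the consistency built into the Wiener algebra — that $A_g$ and its inverse have $p$-independent diagonal representations — which is guaranteed by the quoted inverse closedness and $p$-independence of $\sigma_{\op}$ in $\Wc$, but must be invoked carefully. A secondary subtlety is handling $p\in\{0\}$ and the boundary values $p=1,\infty$ themselves, where Riesz--Thorin is replaced by the direct endpoint estimates from Theorem \ref{TInvaIAlp} and by the fact that $A_g^{-1}$ restricts from $l^\infty$ to $l^0$ with no increase in norm. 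Once these structural points are in place, the argument is essentially a two-endpoint interpolation and the remaining computations are routine.
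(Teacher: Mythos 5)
Your proposal is correct and follows essentially the same route as the paper: the easy implications via Theorem \ref{TBdORichHS} (and Theorem \ref{TLimOps}), then the key step $(3)\Rightarrow(4)$ using $p$-independence of $\sigma_{\op}(A)$ and inverse closedness of $\Wc$, the endpoint uniform bounds at $p\in\{1,\infty\}$ from Theorem \ref{TInvaIAlp}, Riesz--Thorin interpolation, and restriction to $l^0$ (Proposition \ref{Prestrict}). The only cosmetic difference is that the paper states the interpolation bound directly as $\|A_g^{-1}\|_p\leq\max\{\|A_g^{-1}\|_1,\|A_g^{-1}\|_\infty\}$, which is exactly your estimate after absorbing the geometric mean into the maximum.
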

\begin{proof}
The implications 
$\textit{4.}\Rightarrow \textit{2.}\Rightarrow \textit{1.}\Rightarrow \textit{3.}$ 
are clear by Theorems \ref{TBdORichHS} and \ref{TLimOps}. So let $\textit{3.}$ 
hold true. Since the operator spectrum of $A\in\Wc$ does not depend on $p$ and 
since $\Wc$ is inverse closed on every $l^p$, we see that all limit operators of 
$A$ are invertible on all the spaces $l^p$. For $p\in\{1,\infty\}$ there exists 
a uniform bound by Theorem \ref{TInvaIAlp}. Finally, the Riesz-Thorin Interpolation 
Theorem and Proposition \ref{Prestrict} provide
\[\|A_g^{-1}\|_p\leq\max\{\|A_g^{-1}\|_1,\|A_g^{-1}\|_\infty\}\]
for all $p\in\{0\}\cup [1,\infty]$ and all $A_g\in\sigma_{\op}(A)$. Thus 
\eqref{EUniBdd} holds.
\end{proof}

Let us now again turn our attention to the (classical) Fredholm property and the 
proof of Theorem \ref{TWiener2}. 
In fact, our arguments follow in large parts those of the original ones 
\cite{MarkoWiener}, just profit from the new and improved results of Section \ref{SAP}, 
and use an alternative construction of Fredholm operators of prescribed index that 
does not require the hyperplane property of $X$. We start with an auxiliary lemma.

\begin{lem}\label{LHyper}
For every $\kappa\in\Zb$ there exists an operator $S_\kappa\in\Wc$ which is Fredholm
on $l^p=l^p(\Zb^N,X)$ of index $\kappa$ for every $p\in\{0\}\cup[1,\infty]$.
\end{lem}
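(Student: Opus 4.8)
The plan is to manufacture the index by hand from the shift structure on $\Zb^N$, using a rank-one idempotent on $X$ to cut the unavoidable defect down to a single dimension; this is exactly the device that replaces the hyperplane property. First I fix a nonzero $a\in X$ and, by Hahn--Banach, a functional $f\in X^*$ with $f(a)=1$, so that $P_a:=a\otimes f$ (i.e. $P_ax=f(x)a$) is a rank-one idempotent on $X$. Let $E$ denote the operator of fibrewise multiplication by the constant sequence $(P_a)$ on $l^p(\Zb^N,X)$; as a multiplication operator $E\in\Bc\subset\Wc$ and $E^2=E$. Writing $R:=\Zb e_1$ for the first coordinate axis and $R_+:=\{ne_1:n\geq 0\}$, and using the coordinate projections $P_R=P_{R}$, $P_{R_+}$, I set
\[
J:=P_RE,\qquad A:=P_{R_+}V_{-e_1}P_{R_+}+(I-P_{R_+}),\qquad S_1:=JAJ+(I-J).
\]
Since $P_R$, $P_{R_+}$ and $V_{-e_1}$ all commute with the fibre factor $E$, and $R+e_1=R$, the operator $J$ is an idempotent and $A$ commutes with $J$; hence $S_1=AJ+(I-J)$ acts as the identity on $\mathcal{N}:=\im(I-J)$ and as $A|_{\mathcal{M}}$ on $\mathcal{M}:=\im J$. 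As all ingredients are shifts and $0/1$-multiplications, $S_1\in\Bc\subset\Wc$, and being a fixed element of $\Bc$ it is represented by the same matrix on every $l^p$.

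The point of the sandwich by $J$ is that $\mathcal{M}$ consists precisely of the sequences supported on the axis $R$ with values in $\spn a$, hence is isomorphic to the scalar space $l^p(\Zb,\Cb)$; under this identification $A|_{\mathcal{M}}$ becomes the scalar operator $(c_n)\mapsto(c_n')$ with $c_n'=c_n$ for $n<0$ and $c_n'=c_{n+1}$ for $n\geq 0$. A direct computation shows its kernel is $\spn\{\delta_0\}$, while the band operator sending $c_n\mapsto c_n$ $(n<0)$ and $c_n\mapsto c_{n+1}$ $(n\geq 0)$ is a right inverse, so this gadget is surjective; thus it is Fredholm of index $1$ on every $l^p$, $p\in\{0\}\cup[1,\infty]$, including the two boundary cases. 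Because $S_1$ is the identity on $\mathcal{N}$ and this index-one gadget on $\mathcal{M}$, it is Fredholm on each $l^p$ with $\ker S_1=\spn\{a\otimes\delta_0\}$ one-dimensional and $\coker S_1=0$, i.e. $\ind S_1=1$ on all these spaces simultaneously.

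Arbitrary indices now follow by taking powers. For $\kappa>0$ put $S_\kappa:=S_1^{\kappa}$; set $S_0:=I$; and for $\kappa<0$ repeat the construction with $V_{e_1}$ in place of $V_{-e_1}$, obtaining an operator $S_{-1}\in\Wc$ of index $-1$ on every $l^p$, and put $S_\kappa:=S_{-1}^{|\kappa|}$. Since $\Wc$ is a Banach algebra each $S_\kappa\in\Wc$, and since each factor has the same index on every $l^p$, Atkinson's theorem gives $\ind S_\kappa=\kappa$ on all $l^p$, $p\in\{0\}\cup[1,\infty]$. The only genuinely delicate point is the one just exploited: a naive shift across a half-space would create an infinite-dimensional boundary defect once $N\geq 2$ (and an $X$-dimensional one already for $N=1$), so both the restriction to the single ray $R$ and the rank-one fibre projection $E$ are essential to force the kernel and cokernel to be exactly one-dimensional without invoking the hyperplane property of $X$; for $N=1$ one simply has $R=\Zb$, $P_R=I$ and $J=E$.
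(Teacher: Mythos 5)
Your construction is correct and essentially the same as the paper's: both replace the hyperplane property by a rank-one fibre projection on $X$ (the paper cites Pietsch for its existence, you build it via Hahn--Banach) composed with a shift compressed to the first coordinate axis, acting as the identity on the complementary part of $l^p(\Zb^N,X)$, so that the kernel and cokernel are pinned down to finitely many one-dimensional fibres independently of $p$. The only difference is presentational: the paper shifts by $|\kappa|$ at once, obtaining $S_{\pm|\kappa|}$ with explicit one-sided inverses, whereas you construct $S_{\pm 1}$ via a scalar model on $l^p(\Zb,\Cb)$ and take powers, invoking Atkinson's theorem.
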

\begin{proof}
Choose a projection $\Rh\in\lb{X}$ of rank $1$ (see e.g. \cite[B.4.9]{Pietsch}) 
and define a projection $R:l^p\to l^p$, $(x_i)\mapsto (\Rh x_i)$. Moreover 
introduce the projections $\hat{P}:=P_{\Nb\times\{0\}^{N-1}}$ and $\hat{Q}:=I-\hat{P}$. 
Now set 
\[S_{|\kappa|}:=\hat{Q}+\hat{P}(I-R+V_{(-|\kappa|,0,\ldots,0)}R)\hat{P} \quad\text{and} \quad
 S_{-|\kappa|}:=\hat{Q}+\hat{P}(I-R+V_{(|\kappa|,0,\ldots,0)}R)\hat{P}\]
and easily check that $S_{|\kappa|}S_{-|\kappa|}=I$, whereas the spaces
$\ker S_{|\kappa|}=\im RP_{\{0,\ldots,|\kappa|-1\}\times\{0\}^{N-1}}$ as well as 
$\im S_{-|\kappa|}=\ker RP_{\{0,\ldots,|\kappa|-1\}\times\{0\}^{N-1}}$ are of the
dimension (resp. codimension) $|\kappa|$, independent from the choice of $p$.
This easily shows that $S_{|\kappa|}$ and $S_{-|\kappa|}$ are one-sided invertible 
Fredholm operators in $\Wc$ of the index $|\kappa|$ and $-|\kappa|$, respectively.
To convey a better understanding of how these operators act, we mention that for 
$N=1$ the matrix representation of $S_{|\kappa|}$ is given by
\[[S_{|\kappa|}] = \begin{pmatrix}
\ddots\hspace{15pt}& & & & & & & & & \\
& I\hspace{15pt} & & & & & & & & \\
& & I\hspace{15pt} & & & & & & & \\
& & & I-\Rh & 0 & \hspace{-20pt}\stackrel{\text{$|\kappa|-1$ times}}{\ldots}\hspace{-20pt} & 0 & \Rh & & \\
& & & & I-\Rh & 0 &   & 0 & \Rh & \\
& & & & & \ddots\hspace{15pt} & \ddots & & \ddots & \ddots \\
\end{pmatrix}.\]
\end{proof}

This particularly shows that the sequence spaces $l^p$ have the hyperplane
property, independently from the geometry of $X$. The proof of Theorem \ref{TWiener2}
is now straightforward:

\begin{proof}
Let $A\in\Wc$ be Fredholm on one of the spaces $l^p$ and $\kappa:=\ind A$. Then, with
$S_{-\kappa}$ as in Lemma \ref{LHyper}, $AS_{-\kappa}$ is Fredholm of index zero, and 
Corollary \ref{CFredh} provides an operator $K\in\pc{l^p}$ of finite rank $d$ such that 
$AS_{-\kappa}-K$ is invertible on $l^p$. From the definitions, we easily derive that
the operators $P_mKP_m$ belong to $\Wc\cap\pc{l^p}$, have finite rank less than or 
equal to $d$ and converge to $K$ in the operator norm on $l^p$ as $m\to\infty$. 
Since the set of invertible operators is open, we get that for a sufficiently large 
$m$ also $B:= AS_{-\kappa}-P_mKP_m$ is invertible and, moreover, belongs to $\Wc$. 
Due to the inverse closedness of $\Wc$, its inverse $B^{-1}$ is in the Wiener algebra 
as well. Thus, we have
\[I=BB^{-1}= AS_{-\kappa}B^{-1}-P_mKP_mB^{-1}=AC-T\]
with $C:=S_{-\kappa}B^{-1} \in\Wc$ and $T:=P_mKP_mB^{-1}\in\Wc\cap\pc{l^p}$ where
$\rk T \leq d$. Since $P_mKP_m$ considered as operator on $l^r$, 
$r\in\{0\}\cup[1,\infty]$, vanishes on $\im Q_m$ and has a range being a finite 
dimensional subspace of $\im P_m$ we see that $T$ is an operator of finite rank $d$
on $l^r$ as well. Proceeding in a completely symmetric way one also arrives at an  
equation $I=C'A-T'$ with $C'\in\Wc$ and $T'\in\Wc\cap\pc{l^r}$ of finite rank.
Since finite rank operators are compact, we see that $A+\lc{l^r}$ is invertible
in the Calkin algebra $\lb{l^r}/\lc{l^r}$, hence $A$ is Fredholm on $l^r$.
From the equalities
\[0=\ind(I)=\ind(AC-T)=\ind(AS_{-\kappa}B^{-1})= \ind A + \ind S_{-\kappa}\]
we derive the desired relation $\ind A = \kappa$ on $l^r$.
Actually,
\[C=IC=(C'A-T')C=C'(AC-T)+C'T-T'C = C' + (C'T-T'C),\]
hence the difference $C-C'$ is of finite rank, and we can even conclude that both
operators $C,C'$ are Fredholm regularizers for $A$ and belong to $\Wc$.
\end{proof}
Actually, this proof does not only provide that Fredholmness of operators in the 
Wiener algebra and their indices are independent of the underlying space, but it 
also yields regularizers in $\Wc$. Therefore we can specify Theorem \ref{TBDO2}
as follows.
\begin{cor}\label{CWiener}
Let $A\in\Wc$ be Fredholm. Then there exists an operator $B\in \Wc$ which is a
Fredholm regularizer for $A$ on every space $l^p$.
In this case the operator spectrum $\sigma_{\op}(B)$ of $B$ coincides with 
$\{A_g^{-1}:A_g\in\sigma_{\op}(A)\}$. Moreover, $A$ is rich if and only if $B$ is 
rich.
\end{cor}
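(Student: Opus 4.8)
The plan is to reuse the operator already produced in the proof of Theorem~\ref{TWiener2}, so most of the work is done. Indeed, for a Fredholm $A\in\Wc$ of index $\kappa$ that proof constructs operators $C,C'\in\Wc$ together with finite-rank remainders $T,T'\in\Wc\cap\pc{l^p}$ satisfying $AC-T=I$ and $C'A-T'=I$, and it records that $C-C'=C'T-T'C$ is of finite rank. I would simply set $B:=C$ and verify that this single operator does everything the corollary demands, on every space $l^p$ at once, since the construction there is carried out uniformly in the parameter.

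The first step is to upgrade the one-sided relation $AC-I=T$ into a genuine two-sided $\Pc$-regularizer (and Fredholm regularizer) property for $C$. The point is that, although finite-rank operators need not be $\Pc$-compact in general, the specific remainders here are: $T,T'\in\pc{l^p}$ by construction, and since $\pc{\Xb}$ is an ideal of $\pb{\Xb}$ by Proposition~\ref{PLP} with $\Wc\subset\pb{\Xb}$, the difference $C-C'=C'T-T'C$ lies in $\pc{l^p}$ as well. Writing $CA-I=(C'A-I)+(C-C')A=T'+(C-C')A$ and using the ideal property once more shows $CA-I\in\pc{l^p}$, while $AC-I=T\in\pc{l^p}$ already holds. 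Thus $B=C$ is a two-sided $\Pc$-regularizer for $A$; as $T,T'$ and $(C-C')A$ are moreover of finite rank, $B$ is a genuine Fredholm regularizer, and it lies in $\Wc$. This settles the first assertion.

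With $B\in\Wc\subset\Alp$ identified as a Fredholm regularizer for $A$, the remaining two claims follow from the machinery already in place. By Corollary~\ref{CFredh}, $A$ is $\Pc$-Fredholm, so Theorem~\ref{TLimOps} yields $\sigma_{\op}(B)=(\sigma_{\op}(A))^{-1}=\{A_g^{-1}:A_g\in\sigma_{\op}(A)\}$, the equality holding on each space $l^p$ (and the operator spectrum of a Wiener-algebra operator being independent of $p$, as recalled before Theorem~\ref{TPWiener}). For richness I would invoke Corollary~\ref{CRichOps}: a $\Pc$-regularizer of a rich $\Pc$-Fredholm operator is again rich, so $A$ rich forces $B$ rich; conversely, since the $\Pc$-regularizer relation is symmetric, $A$ is itself a $\Pc$-regularizer of the $\Pc$-Fredholm operator $B$, whence $B$ rich forces $A$ rich. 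Equivalently, both statements are exactly the content of Theorem~\ref{TBDO2} specialized to the present $B\in\Wc$.

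The only genuinely non-routine step is the second one: recognizing that the finite-rank corrections arising in the proof of Theorem~\ref{TWiener2} are in fact $\Pc$-compact, so that $C$ becomes a two-sided $\Pc$-regularizer rather than merely a one-sided inverse modulo compacts. Everything else is bookkeeping against the results of Section~\ref{SAP}.
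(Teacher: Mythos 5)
Your proposal is correct and follows essentially the same route as the paper: the paper's own justification is precisely that the proof of Theorem~\ref{TWiener2} already produces the Fredholm regularizers $C,C'\in\Wc$ (valid on every $l^p$ since all operators involved lie in $\Wc$ and the remainders are finite rank on each space), after which the operator-spectrum identity and the richness equivalence are read off from Theorem~\ref{TBDO2} (i.e.\ Theorem~\ref{TLimOps} and Corollary~\ref{CRichOps}). Your extra verification that the remainders are $\Pc$-compact, making $C$ a two-sided $\Pc$-regularizer as well, is a harmless refinement of the same argument.
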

\subsection{Collective compactness and Favard's condition}\label{SFavard}

Chandler-Wilde and Lindner \cite{LiChWFavard} have studied another more concrete class 
of band-dominated operators on $l^\infty(\Zb,X)$ for which the Fredholm criteria become 
much simpler in the sense that already the injectivity of all of their limit operators 
is sufficient for the Fredholmness of such operators.

\begin{defn}
We say that an operator $A$ on $l^\infty(\Zb,X)$ is subject to Favard's condition if
every limit operator of $A$ is injective on $l^\infty(\Zb,X)$.
\end{defn}

Let us first introduce the precise framework.
\begin{defn}
Let $\Uc\Mc$ denote the set of all operators $K\in\Ac_{\infty}$ with the property that 
the operators $k_{ij}:=P_{0}V_{-i}KV_{j}P_{0}$ ($i,j\in\Zb$) form a collectively compact
set, that is
\[\{k_{ij}x:i,j\in\Zb^N,x\in l^\infty(\Zb,X), \|x\|\leq 1\}\text{ is relatively compact}.\]
\end{defn}
If one interprets $K$ as an infinite matrix which acts on the sequence space 
$l^\infty(\Zb,X)$ then the operators $k_{ij}$ can be regarded as its matrix entries.
\footnote{A rigorous description and justification of this perspective can be found in 
\cite{LimOps} or \cite{Marko}.}

Note that the set $\Uc\Mc$ is a Banach space, and its elements are locally compact
operators in the sense of \cite{LpInd}. Moreover, $\Uc\Mc$ is a Banach subalgebra and 
a left-sided ideal of $\Ac_{\infty}$. In case $\dim X<\infty$ it holds that 
$\Uc\Mc=\Ac_{\infty}$. These properties are proved among others in 
\cite[Section 2]{LiChWFavard} or \cite[Section 6.3]{LiChW}.
Here comes the striking advantage of this class as it appears in 
\cite[Theorem 3.1]{LiChWFavard} and \cite[Theorem 6.31]{LiChW}.

\begin{thm}\label{TUM}
Let $A=I-K$ with $K\in\Uc\Mc$ be rich and all limit operators be injective.
Then all limit operators of $A$ are invertible.
\end{thm}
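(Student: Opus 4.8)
The plan is to fix an arbitrary limit operator $A_g=I-K_g$ of $A$ and prove that it is invertible; the global conclusion then follows since $g$ is arbitrary. First I would collect the structural facts that let the hypothesis propagate. Since $\Uc\Mc$ is a Banach subalgebra whose defining collective compactness is inherited under $\Pc$-strong limits of shifted copies, the limit operator $K_g$ again lies in $\Uc\Mc$ and is again rich. Moreover it is standard in the limit operator calculus that limit operators of $A_g$ are themselves limit operators of $A$; hence the assumption that every limit operator of $A$ is injective is inherited by $A_g$ and by all of its own limit operators. In this way the problem becomes self-referential, and it suffices to show the single implication: for such an $I-K_g$, injectivity of all limit operators forces invertibility.

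The technical core is the claim that injectivity of all limit operators makes each $A_g$ bounded below. I would argue by contradiction. If $A_h=I-K_h$ is not bounded below, choose $x^{(n)}$ with $\|x^{(n)}\|_\infty=1$ and $\|A_hx^{(n)}\|_\infty\to 0$, and pick indices $j_n$ at which $\|x^{(n)}_{j_n}\|$ stays bounded away from zero. Passing to the shifted sequences $V_{-j_n}x^{(n)}$ concentrates the mass at the origin and turns $A_h$ into $V_{-j_n}A_hV_{j_n}=I-V_{-j_n}K_hV_{j_n}$; by richness of $K_h$ a subsequence of the conjugates converges $\Pc$-strongly to a further limit operator $K_{h'}$, which is again a limit operator of $K$. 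Now collective compactness enters decisively: all matrix entries of $K$ take their values in one common relatively compact subset of $X$, so a diagonal extraction yields an entrywise limit $y$ of the shifted vectors with $y_0\neq 0$, while the same collective compactness furnishes the uniform control on the tails of the row sums that is needed to pass to the limit in the defining relation. The outcome is a nonzero $y$ with $(I-K_{h'})y=0$, contradicting injectivity of the limit operator $A_{h'}=I-K_{h'}$.

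With boundedness below established for every limit operator of $A$, I would finish through Theorem~\ref{TPDich}. For the fixed operator $A_g$, boundedness below yields $\|A_gT\|\geq c\,\|T\|$ for every $\Pc$-compact $T$, which excludes projections $Q\in\pc{\Xb}$ of arbitrarily large rank with $\|A_gQ\|$ small. To exclude the mirror alternative $\|QA_g\|$ small I would invoke the Riesz--Schauder-type behaviour that collective compactness imposes on $I-K_g$: the same extraction mechanism, applied on the range side, shows that a nontrivial cokernel would again manifest as a genuine kernel element of a limit operator, so that triviality of the kernel already forces triviality of the cokernel (an index-zero phenomenon). Theorem~\ref{TPDich} then certifies that $A_g$ is Fredholm with trivial kernel and cokernel, hence invertible, which is the assertion.

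I expect the principal obstacle to be the diagonal passage to a genuine kernel element: converting an entrywise limit of an approximate-kernel sequence into an honest solution of $(I-K_{h'})y=0$ requires collective compactness to be used simultaneously for the precompactness of each entry and for the uniform decay that tames the infinite row sums, all while keeping the normalization $y_0\neq 0$ alive in the limit. The secondary difficulty is the cokernel estimate. Because $l^\infty$ carries no predual a priori, one cannot simply dualize; instead one must either run the collective-compactness argument symmetrically or localize to $l^0$ via Proposition~\ref{Prestrict}, where a genuine predual is available, in order to legitimize the index-zero reasoning. Making precise the sense in which collective compactness yields the full Fredholm alternative for $I-K_g$ is, in my view, the real substance of the theorem.
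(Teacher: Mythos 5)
First, a remark on the reference point: the paper does not prove Theorem \ref{TUM} at all; it quotes it from Chandler-Wilde and Lindner (\cite[Theorem 3.1]{LiChWFavard}, \cite[Theorem 6.31]{LiChW}), so your proposal has to be measured against that cited proof. Your first half is consistent with it: fixing $A_g=I-K_g$, observing that $K_g\in\Uc\Mc$ is again rich with $\sigma_{\op}(A_g)\subset\sigma_{\op}(A)$, and running the shift/diagonal-extraction argument (collective compactness giving precompact entries, band-dominatedness giving the uniform tail control) to conclude that every limit operator is bounded below --- this is sound and is essentially how the cited proof begins. The genuine gap is the cokernel step. Your self-referential reduction declares it sufficient to prove: if $B=I-L$ with $L\in\Uc\Mc$ rich, $B$ is injective and all limit operators of $B$ are injective, then $B$ is invertible. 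This reduced statement is \emph{false}. Take $\Xb=l^\infty(\Zb,\Cb)$, $P:=P_{\{0,1,2,\ldots\}}$, $Q:=I-P$ and $B:=Q+V_1P=I-L$ with $L=(I-V_1)P$. Then $L$ is a band operator, hence rich and in $\Uc\Mc$ (here $\dim X<\infty$, so $\Uc\Mc=\Ac_\infty$); $B$ is an injective isometry, so certainly bounded below; $\sigma_{\op}(B)=\{I,V_1\}$ consists of invertible, in particular injective, operators; but $(Bx)_0=0$ for every $x$, so $B$ has a one-dimensional cokernel and $\ind B=-1$. Hence injectivity of an operator of this class together with Favard's condition for its \emph{own} operator spectrum does not force surjectivity, and there is no ``Riesz--Schauder/index-zero phenomenon'' for $I-L$ with $L$ merely collectively compact: collective compactness is far weaker than compactness, and Riesz theory does not apply. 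The same example shows that no extraction ``on the range side'' can convert a cokernel into a kernel of a limit operator --- cokernel elements are cosets, not sequences one can shift and pass to entrywise limits with, and on $l^\infty$ there is no usable dual sequence structure (Proposition \ref{Prestrict} does not rescue this either: to invoke it you would already need the Fredholmness you are trying to establish, and Favard's condition says nothing about the $X^*$-valued kernels of adjoints on $l^1$).

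What your reduction throws away is precisely the information that $A_g$ \emph{is} a limit operator of an operator satisfying Favard's condition, and that is what the cited proof exploits. Chandler-Wilde and Lindner obtain surjectivity through the generalized collectively compact operator theory of Chandler-Wilde and Zhang: they approximate $A_g=I-K_g$ by truncations $I-K_gP_n$, which are honest compact perturbations of the identity (finitely many columns with compact entries), hence Fredholm of index zero by genuine Riesz theory; injectivity of these truncations for large $n$, together with uniform bounds on their inverses, is then derived from Favard's condition applied across the whole shift-invariant, self-similar operator spectrum $\sigma_{\op}(A)$ by further concentration arguments; finally, solutions of the truncated equations converge strictly (entrywise) to a solution of $A_gx=y$ by one more collective-compactness extraction. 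So the index-zero mechanism you are reaching for does exist, but it lives at the level of the compact truncations $K_gP_n$ and of uniform estimates over all of $\sigma_{\op}(A)$, not at the level of $A_g$ alone; without that machinery your final paragraph is not a proof and, by the example above, cannot be repaired within the hypotheses you allow yourself.
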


\begin{cor}\label{CUM}
Let $A=I-K$ with $K\in\Uc\Mc$ be rich. Then the following are equivalent.
\begin{enumerate}
\item All limit operators of $A$ are injective (Favard's condition).
\item The operator spectrum of $A$ is uniformly invertible.
\item $A$ is $\Pc$-Fredholm.
\item $A$ is Fredholm.
\item There exists a rich $B\in\Ac_{\infty}$ s.t. $ABA=A$, $BAB=B$ holds and
			$I-BA$, $I-AB$ are compact projections onto the kernel or parallel to the
			range of $A$, respectively.
\end{enumerate}
\end{cor}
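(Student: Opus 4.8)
The plan is to establish the five statements cyclically as $(1)\Rightarrow(2)\Rightarrow(3)\Rightarrow(4)\Rightarrow(5)\Rightarrow(1)$, since every link except $(3)\Rightarrow(4)$ is a fairly direct consequence of the machinery assembled in Section~\ref{SAP}. Throughout I use that $A=I-K$ with $K\in\Uc\Mc\subset\Ac_{\infty}$ is a rich band-dominated operator on $\Xb=l^\infty(\Zb,X)$, so that all of the results for $\Alp$, $\pb{\Xb}$ and $\Lc^\$(\Xb,\Pc)$ are available.

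For $(1)\Rightarrow(2)$ I would feed Favard's condition into Theorem~\ref{TUM}, which upgrades injectivity of all limit operators to invertibility of all limit operators, i.e.\ to elementwise invertibility of $\sigma_{\op}(A)$; since we are on $l^\infty$ and $A$ is rich, Theorem~\ref{TInvaIAlp} (the case $p=\infty$) then promotes this to uniform invertibility, which is $(2)$. The step $(2)\Rightarrow(3)$ is exactly the \emph{if}-part of Theorem~\ref{TBdORichHS} for the rich band-dominated $A$. For $(4)\Rightarrow(5)$ I would invoke Corollary~\ref{CFredh}: a Fredholm $A\in\pb{\Xb}$ has a generalized inverse $B\in\pb{\Xb}$ for which $I-BA$ and $I-AB$ are the finite-rank projections of Theorem~\ref{TPDich} onto $\ker A$ and parallel to $\im A$; being of finite rank they are compact, and being $\Pc$-compact they make $B$ a $\Pc$-regularizer, whence Theorem~\ref{TBDO} gives $B\in\Ac_{\infty}$ and Corollary~\ref{CRichOps} (together with richness of $A$) gives that $B$ is rich. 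Finally $(5)\Rightarrow(1)$ is immediate: the operator in $(5)$ is a regularizer modulo compact operators, so $A$ is Fredholm, hence $\Pc$-Fredholm by Corollary~\ref{CFredh}, and Theorem~\ref{TLimOps} makes all limit operators invertible and in particular injective.

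Everything therefore hinges on the single implication $(3)\Rightarrow(4)$, i.e.\ on upgrading $\Pc$-Fredholmness to genuine Fredholmness, and this is where the hypothesis $K\in\Uc\Mc$ must be spent. The decisive observation is that local compactness and compactness at infinity together force honest compactness:
\[\pc{l^\infty}\cap\Uc\Mc\subseteq\lc{l^\infty}.\]
Indeed, a $\Pc$-compact $T$ is the operator-norm limit of $P_nTP_n$, and if $T\in\Uc\Mc$ then each $P_nTP_n$ is a finite array of the collectively compact entries $k_{ij}$ and hence compact, so $T$ is compact. With this in hand I would argue by contradiction via the non-Fredholm criterion of Theorem~\ref{TPDich}: if $A$ were not Fredholm there would exist, for every $l$ and $\epsilon$, a projection $Q\in\pc{l^\infty}$ with $\rk Q\geq l$ and $\|AQ\|<\epsilon$ or $\|QA\|<\epsilon$. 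In the case $\|QA\|<\epsilon$ one writes $QA=Q-QK$; since $Q\in\pc{l^\infty}\subset\Ac_{\infty}$ and $\Uc\Mc$ is a left ideal of $\Ac_{\infty}$, the product $QK$ lies in $\pc{l^\infty}\cap\Uc\Mc$ and is therefore compact, so $Q$ is within $\epsilon$ of the compact operator $QK$; but then $QK$ is bounded below by $1-\epsilon$ on the at-least-$l$-dimensional space $\im Q$, which is impossible for a compact operator once $l$ is large. In the case $\|AQ\|<\epsilon$ the left-ideal trick fails (only $BK$, not $KQ$, is controlled), so instead I would fix a band-dominated $\Pc$-regularizer $B$ and use $T:=BA-I\in\pc{l^\infty}$ to show that every $x\in\im Q$ satisfies $x\approx Kx$ and $x\approx-Tx$, whence $x\approx P_nKP_m x$ for a single, fixed compact operator $P_nKP_m$ uniformly over the unit ball of $\im Q$; the same lower-bound obstruction then caps $\dim\im Q$. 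Either alternative yields a contradiction, so $A$ is Fredholm.

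The hard part is precisely this last step, and within it the genuinely delicate point is the asymmetry of $\Uc\Mc$: it is only a \emph{left} ideal, so the two alternatives of Theorem~\ref{TPDich} cannot be dispatched by one and the same computation. The $\|QA\|$-alternative collapses cleanly through the intersection lemma, whereas the $\|AQ\|$-alternative has to be routed through the $\Pc$-regularizer so as to replace the uncontrolled product $KQ$ by the compact compressions $P_nKP_m$. Reconciling these two lines of reasoning is the main obstacle; all remaining implications are bookkeeping on top of the results of Section~\ref{SAP}.
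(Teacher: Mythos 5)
Your reduction of everything to the single implication $(3)\Rightarrow(4)$ and your treatment of the remaining links are sound and agree with the paper's own bookkeeping. The gap is in $(3)\Rightarrow(4)$ itself, and ironically it sits in the alternative you call the clean one. In the case $\|QA\|<\epsilon$ you conclude: ``$QK$ is compact and bounded below by $1-\epsilon$ on the at-least-$l$-dimensional space $\im Q$, which is impossible for a compact operator once $l$ is large.'' That principle is false: a compact operator can be bounded below on a finite-dimensional subspace of \emph{any} dimension (any rank-$l$ projection is compact and bounded below by $1$ on its $l$-dimensional range). A contradiction with compactness arises only when one \emph{single} compact operator, independent of $l$, is bounded below on subspaces of unboundedly large finite dimension --- then its image of the unit ball contains arbitrarily large separated sets, contradicting total boundedness. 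In your $\|QA\|$-alternative the compact operator $QK$ changes with $Q$, hence with $l$, so no contradiction follows. This is exactly the uniformity you do secure in the other alternative, where the compression $P_nKP_m$ is fixed once $\epsilon$, $B$ and $T$ are fixed; so your $\|AQ\|$-case is correct in substance, while the $\|QA\|$-case is broken. Repairing it along your lines would require $Q\approx QC$ for a fixed compact $C$ (for instance $C=KTKT$ via the identity $A(I+KB)(I-KT)=I-KTKT$ with $T=AB-I$), and even then a uniform bound on $\|Q\|$ over the projections delivered by Theorem~\ref{TPDich}, which that theorem does not assert.

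For comparison, the paper proves $(3)\Rightarrow(4)$ without Theorem~\ref{TPDich} at all, by pure algebra: from $T:=AB-I\in\pc{\Xb}$ one gets $B=I+KB+T$, hence $A(I+KB)=I+KT$ and $A(I+KB)(I-KT)=I-KTKT$; since $T$ is $\Pc$-compact, $TKT=\lim_m T(P_mKP_m)T$ in norm, and each $P_mKP_m$ is compact by collective compactness of the entries, so $KTKT$ is compact and $A$ is right invertible modulo $\lc{\Xb}$; the symmetric computation starting from $BA-I$ gives a left inverse modulo compacts. Note that this route never multiplies $K$ from the right by an uncontrolled operator --- both one-sided identities involve only the compressions $P_mKP_m$ --- so the left/right asymmetry of $\Uc\Mc$ that your strategy has to fight never enters. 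If you wish to keep your criterion-based approach, you must run \emph{both} alternatives through fixed compact operators the way you did for $\|AQ\|$; as written, the $\|QA\|$ half of your argument does not go through.
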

We want to emphasize again that this covers \textit{all} operators $A\in\Ac_\infty$ in 
case $\dim X<\infty$.
\begin{proof}
The implication \textit{1. $\Rightarrow$ 2.} is Theorem \ref{TUM} together with 
Theorem \ref{TInvaIAlp}, \textit{2. $\Rightarrow$ 1.} is obvious and 
\textit{2. $\Leftrightarrow$ 3.} 
follows from Theorem \ref{TBdORichHS}. Furthermore, \textit{4. $\Rightarrow$ 5.} holds 
by Theorem \ref{TBDO2}, \textit{5. $\Rightarrow$ 4.} is evident, and Corollary 
\ref{CFredh} provides the implication \textit{4. $\Rightarrow$ 3.} Therefore it remains 
to prove \textit{3. $\Rightarrow$ 4.}:
Let $B$ be a $\Pc$-regularizer and let $\Xb$ stand for $l^\infty(\Zb,X)$. 
Then $(I-K)B-I=AB-I=:T\in\pc{\Xb}$ hence $B=I+KB+T$. Thus
\begin{align*}
A(I+KB)&=(I-K)(I+KB)=I-K(I-B+KB)=I+KT
\end{align*}
and $A(I+KB)(I-KT)=I-KTKT$.
Since $T\in\pc{\Xb}$ it holds that $\|TP_mKP_mT-TKT\|\to 0$ as $m\to\infty$.
Furthermore, $P_mKP_m$ is compact, hence $TKT$ and even $KTKT$ are compact, 
which shows that $A+\lc{\Xb}$ has a right inverse in $\lb{\Xb}/\lc{\Xb}$. 
Analogously we find a left inverse for $A+\lc{\Xb}$, and we conclude that 
$A$ is Fredholm.
\end{proof}
This Corollary already appeared in \cite{LiChW} as Corollary 6.32 in large parts, but 
under the additional assumption \eqref{CPrae} on the existence of a predual setting, 
because at that time Corollary \ref{CFredh} and hence the implication 
\textit{4. $\Rightarrow$ 3.} were not available. 

The Open Problem No.~6 in \cite{LiChW} conjectures that Theorem \ref{TUM} may also 
hold in case $l^\infty(\Zb^N,X)$, $N>1$. Unfortunately, the following example 
demonstrates that this is wrong.

\begin{ex}
Let $X$ be a Banach space and $\Rh\in\lb{X}$ be a rank-one-projection.
Define projections $R$ and $M$ on $l^\infty(\Zb^2,X)$ by the rules
$R(x_i):=(\Rh x_i)$ and $M(x_i):=(y_i)$ with 
\[y_i:=\begin{cases} x_i&\text{ if } i=(k,l)\in\Zb^2\text{ with either }l\leq 0 
	\text{ or } k\geq l^2\\
 0&\text{ otherwise}. \end{cases}\]
Furthermore, set $K=(I - V_{(1,0)})MR$ and $A=I-K$.
Clearly, $R$, $M$, $K$ and $A$ are rich band operators and $K\in\Uc\Mc$.

We easily check that the operator spectrum of $M$ consists of 
the zero operator, the identity and all shifted copies $V_{-\alpha}M_kV_\alpha$
($\alpha\in\Zb^2$) of the operators $M_k$ ($k=1,2$) which are given
by $M_k(x_i):=(y_i^k)$ with  
\begin{align*}
 y_i^1&:=\begin{cases} x_i&\text{ if } i=(k,l)\text{ with }l\leq 0\\
 0&\text{ otherwise}, \end{cases}\\
 y_i^2&:=\begin{cases} x_i&\text{ if } i=(k,l)\text{ with }l< 0 \text{ or } k\geq l=0\\
 0&\text{ otherwise}. \end{cases}
\end{align*}
This yields that $\sigma_{\op}(A)$ consists of the identity, the operator
$(I-R)+V_{(1,0)}R$ and all shifted copies of the operators
$I-(I - V_{(1,0)})M_kR$ ($k=1,2$). With the representation
\[I-(I - V_{(1,0)})M_kR=(I-R) + R((I-M_k) + V_{(1,0)}M_k)R\] 
we finally check that all limit operators are injective, but 
$I-(I - V_{(1,0)})M_2R$ is not invertible.
\end{ex}

We finish this section with generalized versions of 
\cite[Proposition 4.1, and Corollary 4.3]{LiChWFavard}
which combine the specialities of both, the algebras $\Wc$ and $\Uc\Mc$, in order 
to extend the previous observations to all spaces $l^p$. Our present result is 
homogenous w.r.t. all $p\in\{0\}\cup[1,\infty]$ and there are not any restrictions 
on the Banach space $X$ remaining, in particular for the index formula.

\begin{cor}
Let $A=I-K$ with $K\in\Uc\Mc\cap\Wc$ be rich. The following are equivalent.
\begin{itemize}
\item[FC.] All limit operators of $A$ are injective on $l^\infty$ (Favard's condition).
\item[1.] All limit operators of $A$ are invertible on \textbf{one} of the spaces $l^p$.
\item[2.] $A$ is $\Pc$-Fredholm on \textbf{one} of the spaces $l^p$.
\item[3.] $A$ is Fredholm on \textbf{one} of the spaces $l^p$.
\item[4.] There exists a Fredholm regularizer $B\in\Wc$ for $A$ on \textbf{one} of the 
					spaces $l^p$.
\item[1'.-4'.] The same as in 1.-4. but with \textbf{one} replaced by \textbf{all}.
\end{itemize}

If $A$ is Fredholm then its index is the same on all the spaces $l^p$ and is given by
the index formula \eqref{EInd}, $\ind(A)=\ind_+(A_u) + \ind_-(A_l)$ with arbitrary 
$A_u\in\sigma_+(A)$ and $A_l\in\sigma_-(A)$. Furthermore, the regularizers and the  
operator spectra of such Fredholm operators $A$ are independent from $p$ and 
fulfill \eqref{EUniBdd} as well as \eqref{ERegOpSpec}.
\end{cor}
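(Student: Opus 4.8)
The plan is to exploit the two algebra structures that $A=I-K$ enjoys. Note first that $A$ is rich (since $K$ is rich and $I$ is), and that $K\in\Wc$ forces $A\in\Wc$, so the Wiener-algebra results (Theorems \ref{TPWiener} and \ref{TWiener2} and Corollary \ref{CWiener}) make $\Pc$-Fredholmness, the invertibility of the limit operators, and the usual Fredholm property all independent of $p$; on the other hand $K\in\Uc\Mc$ puts $A$ into the reach of the collective-compactness machinery (Theorem \ref{TUM} and Corollary \ref{CUM}) on the space $l^\infty$. I would therefore split the listed conditions into a \emph{limit-operator cluster} (FC, 1, $1'$, 2, $2'$) and a \emph{Fredholm cluster} (3, $3'$, 4, $4'$), establish the equivalences inside each cluster from the relevant $p$-independence, and finally bridge the two clusters at a single point.

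First I would treat the limit-operator cluster. Since $A\in\Wc$ is rich, Theorem \ref{TPWiener} directly gives $1\Leftrightarrow 1'\Leftrightarrow 2\Leftrightarrow 2'$, i.e. $\Pc$-Fredholmness and the invertibility of the operator spectrum hold on one space iff they hold on all. To attach FC I would use Theorem \ref{TUM}: Favard's condition says all limit operators are injective on $l^\infty$, whence they are invertible on $l^\infty$, which is condition 1 for the particular space $l^\infty$; so $\text{FC}\Rightarrow 1$. For the converse, invertibility of the limit operators on some $l^p$ together with the facts that $\sigma_{\op}(A)\subset\Wc$ for rich $A\in\Wc$ and that $\Wc$ is inverse closed on every $l^p$ forces each $A_g^{-1}$ to lie in $\Wc$, hence to be bounded on $l^\infty$; thus the limit operators are invertible, in particular injective, on $l^\infty$, giving $1\Rightarrow\text{FC}$.

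Next the Fredholm cluster. By Theorem \ref{TWiener2} the Fredholm property of $A\in\Wc$ is independent of $p$, so $3\Leftrightarrow 3'$; Corollary \ref{CWiener} upgrades Fredholmness to a regularizer $B\in\Wc$ valid on \emph{every} $l^p$, giving $3\Rightarrow 4'$, while $4'\Rightarrow 4\Rightarrow 3$ are immediate. The decisive step is the bridge $2\Leftrightarrow 3$. The direction $3\Rightarrow 2$ is Corollary \ref{CFredh} (Fredholm $\Rightarrow$ $\Pc$-Fredholm for band-dominated operators). For $2\Rightarrow 3$ I would first transport $\Pc$-Fredholmness from the given space to $l^\infty$ via Theorem \ref{TPWiener}, and then invoke the implication ($\Pc$-Fredholm $\Rightarrow$ Fredholm) of Corollary \ref{CUM} on $l^\infty$; this is exactly the place where the collective compactness of $K\in\Uc\Mc$ is indispensable, since in general $\Pc$-Fredholmness does not entail Fredholmness (the projections $Q_m$ being the standard counterexample when $\dim X=\infty$). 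This yields Fredholmness on $l^\infty$, i.e. condition 3, and hence on all spaces by $3\Leftrightarrow 3'$.

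Finally, the additional assertions follow from the machinery already assembled. Uniform invertibility across all $p$, i.e. \eqref{EUniBdd}, is condition 4 of Theorem \ref{TPWiener}, and the regularizer identity \eqref{ERegOpSpec}, $\sigma_{\op}(B)=\{A_g^{-1}:A_g\in\sigma_{\op}(A)\}$, is part of Corollary \ref{CWiener}. For the index, I note that every element of $\Uc\Mc$ has collectively, hence individually, compact matrix entries, so $A=I-K$ belongs to the class for which the simplified formula \eqref{EInd} was established; applying Theorem \ref{TIndex} with the compressions $(A_u)_+$ and $(A_l)_-$ then gives $\ind A=\ind_+(A_u)+\ind_-(A_l)$ for arbitrary $A_u\in\sigma_+(A)$, $A_l\in\sigma_-(A)$, the same on every $l^p$ by Theorem \ref{TWiener2}. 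I expect the main obstacle to be precisely the bridging step $2\Rightarrow 3$: it is the only point at which both algebra structures must be used together, first passing to $l^\infty$ through $\Wc$ and then converting $\Pc$-Fredholmness into genuine Fredholmness through $\Uc\Mc$.
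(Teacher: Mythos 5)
Your treatment of the equivalences is correct and essentially reproduces the paper's own argument: the paper likewise bridges the two clusters at $l^\infty$ (it introduces the auxiliary conditions ``$A$ is $\Pc$-Fredholm on $l^\infty$'' and ``$A$ is Fredholm on $l^\infty$'' and links them to FC via Corollary \ref{CUM}), and obtains the $p$-independence within each cluster from Theorem \ref{TPWiener}, Theorem \ref{TWiener2} and Corollary \ref{CWiener}; your derivation of \eqref{EUniBdd} and \eqref{ERegOpSpec} also matches.

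The gap is in the index formula. Theorem \ref{TIndex} is conditional: it yields \eqref{EIndExt} only under the hypothesis that $(A_u)_+$ and $(A_l)_-$ are Fredholm, and it collapses to \eqref{EInd} only once one also knows $\ind L_nAL_n=0$. You discharge this hypothesis by asserting that $A=I-K$ ``belongs to the class for which the simplified formula \eqref{EInd} was established'', i.e.\ the class of \cite{LpInd}. But that earlier result was proved only for Banach spaces $X$ with the symmetric approximation property --- precisely the kind of restriction on $X$ that this corollary is claimed to remove (``there are not any restrictions on the Banach space $X$ remaining, in particular for the index formula''). So the citation either imports an unavailable hypothesis or begs the question. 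What is actually needed, and what the paper proves directly, is that $PK_gQ$ and $QK_gP$ are compact for \emph{every} limit operator $K_g$: since $K\in\Wc$, it is the $\Wc$-norm (hence operator-norm) limit of band operators $K^{(m)}$ whose diagonal entries are entries of $K$ and therefore compact (this is where $K\in\Uc\Mc$ enters); consequently each $K_g$ is a norm limit of band operators $K_g^{(m)}$ with compact entries, the blocks $PK_g^{(m)}Q$ and $QK_g^{(m)}P$ are finite sums of compact operators, and passing to the norm limit gives compactness of $PK_gQ$ and $QK_gP$. Then $PA_gP+QA_gQ=A_g+PK_gQ+QK_gP$ is a compact perturbation of the invertible operator $A_g$, whence $(A_u)_+$ and $(A_l)_-$ are Fredholm; moreover $L_nKL_n$ is compact (finitely many compact entries), so $\ind L_nAL_n=0$ and \eqref{EIndExt} reduces to \eqref{EInd}. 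Note that compactness of the individual matrix entries of $K_g$ alone does not yield compactness of the semi-infinite blocks $PK_gQ$, $QK_gP$ --- it is the Wiener-norm approximation that makes this work, and this is exactly the point where the two hypotheses $K\in\Uc\Mc$ and $K\in\Wc$ must be combined rather than used in separate clusters.
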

One may ask, if $l^\infty$ in the first statement \textit{FC} could even be replaced 
by $l^p$, but already the simple band operator $A=I-V_1$ dashes this hope, hence the 
first condition in the $l^\infty$-case is really stronger than in the $l^p$-case.
Indeed, $\ker A=\{(c):c\in\Cb\}\subset l^\infty$, and $\sigma_{\op} A=\{A\}$, since $A$ 
is shift invariant, hence all limit operators of $A$, regarded as operator on $l^p$ 
with $p<\infty$, are injective, but, regarded as operator on $l^\infty$, that is 
obviously not true. 

The reader is encouraged to compare these results with the observations of Section
\ref{Sroundup}.
We also note again that, in the case $\dim X<\infty$, $\Uc\Mc=\Ac_{\infty}$ and every
band-dominated operator is rich, thus this corollary applies to all $A\in\Wc$
(cf. also \cite[Section 6.5]{LiChW}).
\begin{proof}
We additionally introduce the conditions
\begin{itemize}
\item[\textit{5.}] \textit{$A$ is $\Pc$-Fredholm on $l^\infty$.}
\item[\textit{6.}] \textit{$A$ is Fredholm on $l^\infty$.}
\end{itemize}
$\textit{FC.} \Leftrightarrow \textit{5.}\Leftrightarrow \textit{6.}$ is Corollary 
\ref{CUM}. $\textit{5.}\Leftrightarrow \textit{1.'}\Leftrightarrow \textit{1.}
\Leftrightarrow \textit{2.'}\Leftrightarrow \textit{2.}$ and 
Equation \eqref{EUniBdd} are provided by Theorem \ref{TPWiener}. Equation
\eqref{ERegOpSpec} is proved in Theorem \ref{TLimOps}.
$\textit{6.}\Leftrightarrow \textit{3.'}\Leftrightarrow \textit{3.}$ as well as the 
independence of the index follow from Theorem \ref{TWiener2}. Furthermore, 
Corollary \ref{CWiener} yields $\textit{3'.}\Rightarrow \textit{4.'}$ whereas 
$\textit{4'.}\Rightarrow \textit{4.}\Rightarrow \textit{3.}$ are evident.

So, let $A=I-K$ be Fredholm. The operator $K\in\Wc$ is the norm limit of a sequence 
$(K^{(m)})$ of operators $K^{(m)}:=\sum_{\alpha=1}^m a_\alpha V_\alpha$, where every 
entry of every diagonal $a_\alpha$ is a compact operator on $X$. Consequently, every limit
operator $K_g$ of $K$ is the norm limit of the respective limit operators $K_g^{(m)}$.
The latter are still finite sums of the form $\sum_{\alpha=1}^m b^{(m)}_\alpha V_\alpha$
having compact entries. $PK_g^{(m)}Q$ (analogously $QK_g^{(m)}P$) are 
\[PK_g^{(m)}Q=P\sum_{\alpha=1}^m b^{(m)}_\alpha V_\alpha Q
=P\sum_{\alpha=1}^m \sum_{\beta\in\Zb}b^{(m)}_\alpha(\beta)P_{\{\beta\}} V_\alpha Q
= P\sum_{\alpha=1}^m \sum_{\beta=1}^\alpha b^{(m)}_\alpha(\beta)P_{\{\beta\}}) V_\alpha Q, \]
i.e. finite sums of compact operators. Therefore $PK_gQ$ and $QK_gP$ are compact, 
hence $PA_gP+QA_gQ$ are compactly perturbed copies of the invertible operators $A_g$. 
So we see that all restricted limit operators $(A_u)_+$ and $(A_l)_-$ of $A$ as 
they appear in Theorem \ref{TIndex} are Fredholm, and we obtain the index formula 
\eqref{EIndExt} which even simplifies to \eqref{EInd} since $L_nKL_n$ is compact 
due to the above reasons.
\end{proof}

\section{Extensions and Generalizations}\label{SGen}

\subparagraph{One-sided definitions}
In \cite{LiChW} it is also discussed whether one can weaken and replace 
the definitions of $\pc{\Xb}$, $\pb{\Xb}$ and $\Pc$-strong convergence by one sided 
analogues (see the Open Problems No. 1 to 3 there). More precisely, one may define 
(\cite[Lemma 3.3, Corollary 3.5]{LiChW})
\begin{align*}
SN(\Xb):=&\{K\in\lb{\Xb}:\|KQ_n\|=\|KP_n-K\|\to 0 \text{ as } n\to\infty\}\\
S(\Xb):=&\{A\in\lb{\Xb}: KA\in SN(\Xb) \text{ for every }K\in SN(\Xb)\}\\
			=&\{A\in\lb{\Xb}:\|P_mAQ_n\|\to 0 \text{ as } n\to\infty \text{ for every }m\}
\end{align*}
and say that $(A_n)\subset\lb{\Xb}$ $s$-converges to $A$ (or has the $s$-limit $A$) if 
\[\|K(A_n-A)\|\to 0 \text{ for every }K\in SN(\Xb).\]
Let us consider the following example which illustrates that weakening the definition 
of $\pc{\Xb}$ necessitates also the modification of $\pb{\Xb}$ and the notion of
convergence:
\begin{ex}\label{EExpand}
Let $\psi:L^p(0,1)\to L^p(\Rb)$ be an isometric Banach space isomorphism. As an example
one may take the following construction: The mapping
\[\phi:L^p(\Rb_+)\to L^p(\Rb),\quad (\phi f)(t):=e^{-t/p}f(e^{-t})\]
is an isometry, and its inverse mapping is 
$(\phi^{-1}g)(s)=s^{-1/p} g(-\log s)$, $s>0$. The same holds true for the restriction
$\phi:L^p(0,1)\to L^p(\Rb_+)$. Thus $\psi:=\phi\circ\phi:L^p(0,1)\to L^p(\Rb)$ does 
the job. Now, let $\Xb:=l^p(\Zb,L^p(0,1))$ and define $B\in\lb{\Xb}$ by
\[(B(x_n))_k:=\chi_{(0,1)}V_{-k}\psi x_0,\quad k\in\Zb.\]
Clearly $B$ is an isometric isomorphism between $\im P_0$ and $\Xb$, $B$ belongs to 
$SN(\Xb)$, but $\|Q_nBP_0\|$ does not tend to zero as $n\to\infty$. Thus, we see that 
$SN(\Xb)$ is not a subset of $\pb{\Xb}$, hence would not serve as an ideal there.
Moreover, $\|(A_n-A)B\|\to 0$ would always imply $\|A_n-A\|\to 0$, so a two-sided 
definition of convergence based on $SN(\Xb)$ instead of $\pc{\Xb}$ would be nothing 
but the usual norm convergence.
\end{ex}

The motivation for such modified definitions is obvious: One can now try to develop an 
analogous theory for the larger family $S(\Xb)\supset\pb{\Xb}$. To be more concrete, one 
may ask which of the results of Section \ref{SAP} translate to this more general setting 
and, in particular, whether there is an analogous Fredholm and limit operator theory 
available.

An initial dawn of hope arises with the observation that Propositions \ref{PLP} and 
\ref{PPstrong} are valid in the present setting as well: 

\begin{prop}
Let $\Xb$ be a Banach space with uniform approximate identity $\Pc=(P_n)$.
\begin{itemize}
\item The set $S(\Xb)$ is a closed subalgebra of $\lb{\Xb}$, it contains the identity 
			operator $I$, and $SN(\Xb)$ is a proper closed ideal of $S(\Xb)$. 
\item The algebra $S(\Xb)$ is closed with respect to $s$-convergence, this 
			means that if $(A_n)\subset S(\Xb)$ s-converges to $A$ then 
			$A\in S(\Xb)$. 
\item $(A_n)\subset\lb{\Xb}$ has the $s$-limit $A$ iff it is bounded and
			$\|P_m(A_n-A)\|\to 0 \text{ for every }m.$
\item Let $FS(\Xb)$ denote the collection of all sequences 
			$(A_n)\subset \lb{\Xb}$ which possess a $s$-limit in $S(\Xb)$. Then
			the $s$-limit of every $(A_n)\in FS(\Xb)$ is uniquely determined.
\item Provided with pointwise operations and the supremum norm, $FS(\Xb)$ becomes 
			a Banach algebra with identity $\Ib:=(I)$. 
			The mapping $FS(\Xb) \to S(\Xb)$ which sends $(A_n)$ to its $s$-limit 
			$A$ is a unital algebra homomorphism and
			\begin{equation}\label{EPLimAbs}
			\|A\| \leq B_\Pc\liminf_{n\to\infty} \|A_n\|\quad\text{where}\quad
			B_\Pc:=\limsup_{n\to\infty}\|P_n\|.
			\end{equation}
\end{itemize}
\end{prop}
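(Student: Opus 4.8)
The plan is to mirror the proof structure that underlies Propositions \ref{PLP} and \ref{PPstrong} in the two-sided setting, adapting each step to the one-sided notions $SN(\Xb)$, $S(\Xb)$, and $s$-convergence. The essential simplification here is that the characterizing condition for membership and for convergence has collapsed to a purely one-sided form: $A\in S(\Xb)$ iff $\|P_mAQ_n\|\to 0$ for every $m$, and $(A_n)$ has $s$-limit $A$ iff it is bounded and $\|P_m(A_n-A)\|\to 0$ for every $m$. So I would first establish these two equivalent characterizations, since every subsequent item leans on them. For the algebra/ideal statement I would verify closedness and the algebra axioms directly from the norm-limit characterization $\|P_mAQ_n\|\to 0$; the ideal property $SN(\Xb)\cdot S(\Xb)\subset SN(\Xb)$ and $S(\Xb)\cdot SN(\Xb)\subset SN(\Xb)$ follows from the definition of $S(\Xb)$ on one side and a short estimate $\|KAQ_n\|\le\|K\|\,\|AQ_n\|$ together with $\|AQ_n\|\to 0$ for $K\in SN(\Xb)$ on the other. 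Properness of the ideal is immediate since $I\notin SN(\Xb)$ (as $\|Q_n\|\ge 1$ does not tend to zero under the approximate-identity hypothesis).

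Next I would treat the convergence-theoretic items. For the equivalent description of $s$-convergence, the forward direction uses that each $K=P_m(\cdot)$-type operator, or more precisely a suitable $K\in SN(\Xb)$ built from $P_m$, detects $\|P_m(A_n-A)\|$; the reverse direction requires producing, for an arbitrary $K\in SN(\Xb)$, the bound $\|K(A_n-A)\|\le\|KP_m\|\,\|P_m(A_n-A)\|+\|KQ_m\|\,\|A_n-A\|$ and then choosing $m$ large so that $\|KQ_m\|$ is small (using $K\in SN(\Xb)$) and exploiting boundedness of $(A_n)$. This is exactly where uniform boundedness of the sequence enters and is indispensable; without it the second term cannot be controlled. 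Closedness of $S(\Xb)$ under $s$-convergence then follows by combining the membership characterization with the convergence characterization: if $A_n\in S(\Xb)$ and $A_n\xrightarrow{s}A$, one estimates $\|P_mAQ_n\|$ through $\|P_mA_kQ_n\|+\|P_m(A-A_k)Q_n\|$, picking $k$ first and then $n$. Uniqueness of the $s$-limit is routine from the characterization, since $\|P_m(A-A')\|=0$ for all $m$ forces $A=A'$ by the approximate-identity property $\sup_n\|P_nx\|\ge\|x\|$.

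The Banach-algebra structure on $FS(\Xb)$ and the homomorphism property are largely formal: completeness under the supremum norm is standard, and multiplicativity of the limit map reduces to showing $P_m(A_nB_n-AB)\to 0$, which one splits as $P_m(A_n-A)B_n+P_mA(B_n-B)$ and handles termwise using boundedness of $(B_n)$ and, for the second term, the fact that $A\in S(\Xb)$ lets one insert $P_{m'}$ on the right with controllable error. The one genuinely new feature is the inequality \eqref{EPLimAbs} with the constant $B_\Pc=\limsup_n\|P_n\|$. Here I would argue $\|P_mA\|=\lim_n\|P_mA_n\|\le\|P_m\|\liminf_n\|A_n\|$ for each fixed $m$, then pass to the supremum over $m$ using $\|A\|=\sup_m\|P_mA\|$ (again from the approximate-identity hypothesis), which introduces the factor $\limsup_m\|P_m\|=B_\Pc$.

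The main obstacle I anticipate is precisely this last constant. In the two-sided Proposition \ref{PPstrong} the analogous estimate \eqref{EPLim} is clean, $\|A\|\le\liminf_n\|A_n\|$, with no extra factor, because there one has two-sided control and the projections are uniformly bounded in a way that the two-sided symmetry absorbs. In the one-sided world the reconstruction of $\|A\|$ from the one-sided quantities $\|P_mA\|$ is only possible up to the uniform bound of the $P_m$, so the factor $B_\Pc$ is unavoidable and must be carried through honestly; the delicate point is to interchange the supremum over $m$ with the $\liminf$ over $n$ correctly so that the constant comes out as $\limsup_m\|P_m\|$ rather than something weaker. I expect everything else to be a faithful, if careful, one-sided transcription of the cited two-sided arguments.
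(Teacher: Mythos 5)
Your overall strategy coincides with the paper's (a one-sided transcription of Propositions \ref{PLP} and \ref{PPstrong} built on the characterizations $A\in S(\Xb)\Leftrightarrow\|P_mAQ_n\|\to0$ and $s$-convergence $\Leftrightarrow$ boundedness plus $\|P_m(A_n-A)\|\to0$), but there is a genuine gap in your treatment of the third item, and it propagates into the fifth. That item is an equivalence whose \emph{only if} half asserts that a sequence possessing an $s$-limit is automatically \emph{bounded}. Boundedness is not part of the definition of $s$-convergence (which only demands $\|K(A_n-A)\|\to0$ for all $K\in SN(\Xb)$), so this implication is a Banach--Steinhaus type theorem; the paper does not reprove it but explicitly invokes \cite[Lemma 4.2]{LiChW} (in essence: apply the uniform boundedness principle to the operators $K\mapsto K(A_n-A)$ acting on the Banach space $SN(\Xb)$, then recover $\|A_n-A\|$ from $\sup_m\|P_m(A_n-A)\|$ via the approximate identity). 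In your proposal, boundedness appears only as a hypothesis that ``enters and is indispensable'' in the \emph{if} direction, and the forward direction is reduced to testing with $P_m\in SN(\Xb)$; the automatic boundedness is never derived or even flagged as needing proof. This is not cosmetic: $FS(\Xb)$ is defined as the set of \emph{all} sequences possessing an $s$-limit, with no boundedness requirement, so without this result the supremum norm in the fifth item need not be finite, your multiplicativity argument (which uses boundedness of $(B_n)$) has no basis, and the completeness argument is likewise affected.

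A second, smaller defect concerns the constant in \eqref{EPLimAbs}. Your chain $\|A\|\le\sup_m\|P_mA\|\le\bigl(\sup_m\|P_m\|\bigr)\liminf_n\|A_n\|$ yields the constant $\sup_m\|P_m\|$, which can strictly exceed $B_\Pc=\limsup_m\|P_m\|$; you correctly flag this as ``the delicate point'' but leave it unresolved, which is not a proof. The paper's device is to make only tails matter: pass to a subsequence of $(A_n)$ realizing the $\liminf$ and cancel finitely many initial entries so that its supremum norm is within $\epsilon$ of $\liminf_n\|A_n\|$, and work with projections of large index (where $\|P_m\|\le B_\Pc+\epsilon$) before concluding via the approximate identity; some such tail argument is required to land on $B_\Pc$. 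The remaining items in your proposal are sound and essentially match the paper's reasoning: your direct verification of the first item replaces the paper's citation of \cite{LiChW}, and your proof of closedness of $S(\Xb)$ under $s$-convergence (splitting $\|P_mAQ_n\|$ through $A_k$) is a valid, mildly different alternative to the paper's cleaner argument that $KA=\lim_n KA_n\in SN(\Xb)$ by closedness of $SN(\Xb)$, which avoids the $P_m$-characterization altogether.
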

\begin{proof}
The first assertion is \cite[Lemmata 3.32, 3.3 and Corollary 3.5]{LiChW}.
For the second assertion let $K\in SN(\Xb)$. Then $KA_n\in SN(\Xb)$ and 
$\|KA_n-KA\| \to 0$. Since $SN(\Xb)$ is closed, this implies that $KA$ belongs 
to $SN(\Xb)$, thus $A\in S(\Xb)$. 
The \textit{if} part of the third assertion easily follows from the estimate
$\|K(A_n-A)\|\leq\|KQ_m\|\|(A_n-A)\|+\|K\|\|P_m(A_n-A)\|.$
The boundedness in the \textit{only if} part can be proved as 
\cite[Lemma 4.2]{LiChW} and the rest is trivial since $\Pc\subset SN(\Xb)$.
The fourth assertion is quite obvious: If $\|P_m(A_n-A)\|$ and $\|P_m(A_n-B)\|$ tend 
to zero as $n$ goes to infinity for every fixed $m$, then $P_m(A-B)=0$ for every $m$, 
hence $A-B=0$ since $\Pc$ is an approximate identity.

The proof of $FS(\Xb)$ being a normed algebra is again straightforward, and 
we only note that if $(A_n), (B_n)$ $s$-converge to $A, B\in S(\Xb)$, 
resp., then they are bounded and
\begin{align*}
\|K(A_nB_n-AB)\|&\leq \|K(A_n-A)B_n\| + \|KA(B_n-B)\| \to 0
\end{align*}
for every $K\in SN(\Xb)$, as $n\to\infty$, since $A\in S(\Xb)$ implies $KA\in SN(\Xb)$.
For the estimate~\eqref{EPLimAbs} we can replace $(A_n)$ by one of its subsequences 
which realize the $\liminf$, cancel arbitrarily but finitely many entries at the 
beginning of this subsequence and get from the third assertion that 
$\|P_mA\|\leq B_\Pc\liminf \|A_n\|+\epsilon$ for every $\epsilon>0$ and every $m$. 
Then we conclude \eqref{EPLimAbs}
since $\Pc$ is an approximate identity.
Finally, let $((C_n^m))_m$ be a Cauchy sequence of sequences $(C_n^m)\in FS(\Xb)$, 
where $C^m$ shall denote the $s$-limit of $(C_n^m)$, respectively. For every 
$n$, $(C_n^m)_m$ converges in $\lb{\Xb}$ to an element $C_n$, and the sequence $(C_n)$ is 
uniformly bounded. Furthermore, \eqref{EPLimAbs} yields that $(C^m)_m$ is a Cauchy 
sequence with a limit $C\in S(\Xb)$. Now one easily checks that $(C_n)$ $s$-converges 
to $C$, thus $FS(\Xb)$ is complete.
\end{proof}

Notice that in cases of $(P_n)$ being a sequence of compact operators and such that
their adjoints $P_n^*$ converge strongly to the identity we do not get anything 
new for Fredholm theory, since then $SN(\Xb)=\lc{\Xb}$ and $S(\Xb)=\lb{\Xb}$.
This particularly involves the cases $\Xb=l^p(\Zb^N,X)$ with $p\in\{0\}\cup(1,\infty)$ 
and $\dim X<\infty$.

\subparagraph{On inverse closedness of $S(\Xb)$ and generalized Fredholmness}
The Open Problem No. 3 in \cite{LiChW} asks whether $S(\Xb)$ is inverse closed.
In the above mentioned cases where $S(\Xb)=\lb{\Xb}$ this is obviously true.
Another partial answer in \cite[Section 3.2]{LiChW} extends that to the cases of
compact $(P_n)$ on Banach spaces $\Xb$ which are complete w.r.t. a certain topology.
By this means, one gets an affirmative answer for all cases $\Xb=l^p(\Zb^N,X)$ 
with $p\in\{0\}\cup[1,\infty]$ and $\dim X<\infty$. The most popular application
with $\dim X=\infty$ is $\Xb=l^p(\Zb,L^p(0,1))\cong L^p(\Rb)$. Unfortunately, 
already in this natural situation the picture changes, as the following examples 
demonstrate.

\begin{ex}\label{Ex1}
Consider $\Xb=l^\infty(\Zb,L^\infty(0,1))$. For $k\in\Nb$ define
\[B_k:L^\infty\left(\frac{1}{k+1},\frac{1}{k}\right)\to L^\infty(0,1),\quad B_kf(x)=f\left(\frac{1}{k+1}+x\left(\frac{1}{k}-\frac{1}{k+1}\right)\right)\]
and the respective extensions $C_k:L^\infty(0,1)\to L^\infty(0,1)$,
$C_k=B_k\chi_{\left(\frac{1}{k+1},\frac{1}{k}\right)}I$.
Figuratively speaking, these operators single out a certain part of a given function
and stretch it. Now, let the operator $A$ on $\Xb$ be given by
\[(Ax)_n=\begin{cases}
x_n & : n< 0\\
C_{\frac{n}{2}+1}x_0 &: n\geq 0 \text{ even}\\
x_{\frac{n+1}{2}} &: n\geq 0 \text{ odd}.
\end{cases}\]
Its matrix representation is shown in Figure \ref{FEx}.
Clearly, this operator belongs to $S(\Xb)$ and it is invertible where its inverse
can be obtained by reflecting the matrix w.r.t. the main diagonal and replacing 
$C_n$ by the inverse of $B_n$. This inverse does not belong to $S(\Xb)$.
\end{ex}
\begin{figure}
$\begin{pmatrix}
\ddots & & & & & & & &\\
& I & & & & & & &\\
& & I & & & & & &\\
& & & C_1 & & & & &\\
& & & 0 & I & & & &\\
& & & C_2 & 0 & 0 & & &\\
& & & 0 & 0 & I & 0 & &\\
& & & C_3 & 0 & 0 & 0 & 0 &\\
& & & 0 & 0 &  0& I & 0 &\\
& & & C_4 & & & & &\\
& & & \vdots & & & & \vdots &\\
\end{pmatrix},
\quad \quad
\begin{pmatrix}
\ddots & & & \vdots & & & &\\
& J_2 & & J_1B_{-2} & & & &\\
& & J_2 & J_1B_{-1} & & & &\\
& & & B_0 & & & &\\
& & & J_1B_1 & J_2 & & &\\
& & & J_1B_2 & & J_2 & &\\
& & & J_1B_3 & & & J_2 &\\
& & & \vdots & & & & \ddots\\
\end{pmatrix}.
$
\caption{Matrix representations of the Examples \ref{Ex1} and \ref{Ex2}.}
\label{FEx}
\end{figure}
\begin{ex}\label{Ex2}
Let $\theta_1:L^p(0,1)\to L^p(0,1/2)$ and $\theta_2:L^p(0,1)\to L^p(1/2,1)$ be 
isometric isomorphisms, define 
$J_1:=\diag(\ldots, \theta_1,\theta_1, I, \theta_1, \theta_1, \ldots)$ and
$J_2:=\diag(\ldots, \theta_2,\theta_2, 0, \theta_2, \theta_2, \ldots)$,
and recall the operators $\psi$ and $B$ from Example \ref{EExpand}. Then
$A:=J_1B+J_2$ is an invertible isometry on $\Xb:=l^p(\Zb,L^p(0,1))$ which belongs
to $S(\Xb)$, but $A^{-1}\notin S(\Xb)$. With 
the definition $B_k:=\chi_{(0,1)}V_{-k}\psi:L^p(0,1)\to L^p(0,1)$ its matrix 
representation is shown in Figure \ref{FEx}.
\end{ex}

Notice that one cannot expect a comparable Fredholm theory as well. Clearly, every 
invertible operator $A\in S(\Xb)$ is regularizable w.r.t. $SN(\Xb)$ in the spirit 
of Definition \ref{DInvAtInf} (invertible at infinity), but it may not belong to an 
invertible coset in $S(\Xb)/SN(\Xb)$ (as in Definition \ref{DPFred}) in general 
since the inverse may be outside $S(\Xb)$.
Actually, the ``invertibility at infinity'' would not even be compatible with 
multiplication: The operator $A$ from Example \ref{Ex1} is invertible, hence 
``invertible at infinity'', and so is $Q_1$, but $AQ_1$ is not: Assume that there is 
a $SN(\Xb)$-regularizer $B$ for $AQ_1$ and let $R$ denote the projection 
\[(x_n)\mapsto (y_n),\quad y_n:=
\begin{cases} x_n &: n\geq 0\text{ even}\\ 0 &: \text{otherwise}.\end{cases}\]
Then $R\in S(\Xb)$ and, due to the equality $R=R(I-AQ_1B)$, it would even belong to 
$SN(\Xb)$, a contradiction. Moreover this yields that $A$ has no regularizer w.r.t. 
$SN(\Xb)$ in $S(\Xb)$ at all: To see this, assume that $C\in S(\Xb)$ is a regularizer 
for $A$, then 
$I-AQ_1C=I-AC+AP_1C\in SN(\Xb)$ and $I-CAQ_1=I-CA+CAP_1\in SN(\Xb)$, that means $C$ is a 
regularizer for $AQ_1$, again a contradiction. Of course, a similar observation can
easily be made for the operators on $l^p$-spaces from Example \ref{Ex2} as well.

Thus, we see that it is hardly possible to achieve a comparable Fredholm theory in
the $S(\Xb)$-setting without further restrictions and assumptions.

\subparagraph{On limit operators}
Given an operator $A$ one has to expect a larger operator spectrum after passing
to a weaker definition of convergence. However,  
as a start we point out that $s$-convergence cannot provide any additional benefit 
as long as one is only interested in the limit operators of band-dominated operators.
For this, we denote by $\sigma^\Pc_{\op}(A)$, $\sigma^s_{\op}(A)$ the operator spectra 
of $A$ w.r.t. $\Pc$-strong convergence or $s$-convergence, respectively.

\begin{prop}
For every $A\in\Alp$ we have $\sigma^\Pc_{\op}(A)=\sigma^s_{\op}(A)$.
\end{prop}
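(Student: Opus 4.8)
The plan is to compare the two operator spectra directly through their defining modes of convergence. Recall that, for the bounded sequence $W_n:=V_{-g_n}AV_{g_n}$, $\mathcal{P}$-strong convergence to $A_g$ means $\|P_m(W_n-A_g)\|\to 0$ \emph{and} $\|(W_n-A_g)P_m\|\to 0$ for every $m$, whereas $s$-convergence only requires the first of these. Hence $\mathcal{P}$-strong convergence always implies $s$-convergence, so an existing $\mathcal{P}$-strong limit operator is a fortiori the $s$-limit of the same shifted sequence, and the inclusion $\sigma^\Pc_{\op}(A)\subseteq\sigma^s_{\op}(A)$ is immediate.

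For the reverse inclusion the substance is to upgrade $s$-convergence to $\mathcal{P}$-strong convergence. So I would fix $A_g\in\sigma^s_{\op}(A)$, given by a sequence $g=(g_n)$ with $|g_n|\to\infty$ and $\|P_m(W_n-A_g)\|\to 0$ for every $m$, and aim to prove $\|(W_n-A_g)P_m\|\to 0$ for every $m$ (then $W_n\to A_g$ even $\mathcal{P}$-strongly, along the \emph{same} $g$). Given $\epsilon>0$, band-dominatedness lets me pick a band operator $A'\in\Bc$ of some width $w$ with $\|A-A'\|<\epsilon$. Since the shifts are isometric isomorphisms, $\|V_{-g_n}(A-A')V_{g_n}\|=\|A-A'\|<\epsilon$ for all $n$, and the shifted copy $V_{-g_n}A'V_{g_n}$ is again a band operator of width $w$; consequently it kills the lower-left corner, $Q_{m+w}V_{-g_n}A'V_{g_n}P_m=0$.

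The decisive piece is exactly that corner $Q_{m+w}(W_n-A_g)P_m$, the only part not already controlled by the hypothesis. Splitting $A=A'+(A-A')$ and using the vanishing corner of the band part gives $\|Q_{m+w}W_nP_m\|\le C_1\epsilon$ uniformly in $n$, with $C_1$ built from the uniform bound $C:=\sup_U\|P_U\|$. To transfer this bound to $A_g$ itself I would use that $s$-convergence controls precisely the row truncations: for $k\ge m+w$ one has $Q_{m+w}P_k=P_kQ_{m+w}$ and $\|P_kQ_{m+w}W_nP_m\|\le C\|Q_{m+w}W_nP_m\|\le CC_1\epsilon$, whereas $P_kQ_{m+w}=0$ for $k\le m+w$; letting $n\to\infty$ via $\|P_k(W_n-A_g)\|\to 0$ yields $\sup_k\|P_kQ_{m+w}A_gP_m\|\le CC_1\epsilon$, and the approximate-identity property $\sup_k\|P_ky\|\ge\|y\|$ then forces $\|Q_{m+w}A_gP_m\|\le CC_1\epsilon$. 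Combined with $\|P_{m+w}(W_n-A_g)P_m\|\le C\|P_{m+w}(W_n-A_g)\|\to 0$, this bounds $\limsup_n\|(W_n-A_g)P_m\|$ by a constant multiple of $\epsilon$, and since $\epsilon$ is arbitrary the desired convergence follows.

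The main obstacle is the asymmetry of $s$-convergence: it controls left (row) truncations $P_m(\cdot)$ but says nothing about right (column) truncations $(\cdot)P_m$, which is exactly what $\mathcal{P}$-strong convergence additionally demands. The band structure is precisely the bridge across this gap, since it confines the operator to a finite-width corner and thereby forces the missing column information into the available row information. This is also why the argument genuinely uses band-dominatedness rather than mere membership in $\pb{\Xb}$, and I would not expect a comparable equality $\sigma^\Pc_{\op}=\sigma^s_{\op}$ to hold for general operators in $\pb{\Xb}$.
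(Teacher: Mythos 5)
Your proof is correct, and its engine is the same as the paper's: approximate $A$ by a band operator $A'$ of width $w$, observe that conjugation by shifts preserves the band structure so that $Q_{m+w}V_{-g_n}A'V_{g_n}P_m=0$, and use this to recover the missing column information $(\cdot)P_m$ from the row information $P_k(\cdot)$ that $s$-convergence supplies. The differences lie in the architecture. The paper argues by contradiction: if $A_g\in\sigma^s_{\op}(A)\setminus\sigma^\Pc_{\op}(A)$, it passes to a subsequence $h$ of $g$ with $\|(V_{-h_n}AV_{h_n}-A_g)P_m\|\to c>0$, then uses band approximations of \emph{both} $A$ and $A_g$ to obtain $\|Q_k(V_{-h_n}AV_{h_n}-A_g)P_m\|<c/2$ for large $n$, which forces $\|P_k(V_{-h_n}AV_{h_n}-A_g)\|\not\to 0$, contradicting $s$-convergence. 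In particular the paper's argument invokes band-dominatedness of the limit operator $A_g$ itself, a fact it asserts without proof and which is not covered by Theorem \ref{TBDO} (that theorem concerns $\Pc$-limits, whereas here $A_g$ is a priori only an $s$-limit). You instead upgrade the convergence directly along the same sequence $g$ and never assume anything about $A_g$: you transfer the uniform corner bound $\|Q_{m+w}V_{-g_n}AV_{g_n}P_m\|\le C_1\epsilon$ to $A_g$ by sandwiching with $P_k$, letting $n\to\infty$ via the $s$-convergence, and invoking the approximate-identity property $\sup_k\|P_ky\|\ge\|y\|$. This makes your proof self-contained exactly where the paper's has a small unproved step, at the cost of a slightly longer computation; both arguments correctly identify band-dominatedness of $A$ (rather than mere membership in $\pb{\Xb}$) as the essential hypothesis, in accordance with the counterexample the paper gives immediately afterwards for general operators in $\pb{\Xb}$.
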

\begin{proof}
The inclusion ``$\subset$'' is obvious. 
Assume that $A_g\in\sigma^s_{\op}(A)\setminus\sigma^\Pc_{\op}(A)$. Then there is a 
subsequence $h$ of $g$ such that $\|(V_{-h_n}AV_{h_n}-A_g)P_m\|\to c>0$. Since $A$ 
and $A_g$ are band-dominated there exist $k$, $m$ and $n_0$ such that 
$\|Q_k(V_{-h_n}AV_{h_n}-A_g)P_m\|<c/2$ for all $n\geq n_0$. To see this just approximate
$A$ an $A_g$ by band operators. This yields $\|P_k(V_{-h_n}AV_{h_n}-A_g)\|\not\to 0$,
a contradiction.
\end{proof}

So, let us now look at operators outside $\Alp$. When the $\Pc$-strong convergence is 
replaced by $s$-convergence, the operator spectrum of invertible operators becomes larger
and, unfortunately, may contain non-invertible limit operators, as the next example shows. 
Thus, Theorem \ref{TLimOps}, the backbone of the limit operator method, does not remain valid 
for $s$-convergence.

\begin{ex}
Consider $\Xb=l^p(\Zb,\Cb)$ with the canonical projections $(P_n)$, let $I_m$ denote 
the $m\times m$ identity matrix and $C_m$ the $m\times m$ circulant matrix
\[C_m:=\begin{pmatrix}
0 & 1 & &\\
& 0 & \ddots& \\
& & \ddots & 1 \\
1 & & & 0 \\
\end{pmatrix}.
\quad \text{ Also set }
B:=
\begin{pmatrix}
\ddots & & & & & &\\
& 1 & & & & & &  \\
& & 1 & & & & &  \\
& & & 0 & 1 & &  \\
& & & & 0 & 1 &  \\
& & & & & 0 & \ddots& \\
& & & & & & \ddots\\
\end{pmatrix}.\]
Now define $A$ by the infinite block diagonal matrix 
$\diag\{Q,I_1,C_1,I_2,C_2,I_3,C_3,\ldots\}$.
Then $A$ belongs to $\pb{\Xb}$, is invertible, but has the non-invertible limit operator
$B\in\sigma^s_{\op}(A)$.
\end{ex}

\subparagraph{Quasi-banded operators}
Although the above examples do not condemn the hope for a reasonable Fredholm or limit 
operator theory w.r.t. $s$-convergence in total, they at least show that important 
results are not available in this more general $S(\Xb)$-setting.
However, there is still a wide field of possible generalizations between the pleasant 
and well understood class $\Alp$ of band-dominated operators and the whole algebra 
$\pb{\Xb}$, equipped with $\Pc$-strong convergence. One should draw attention e.g. to 
the following class of operators:
\begin{defn}
An operator $A\in\lb{\Xb}$ is said to be quasi-banded if
\begin{equation}\label{DWBDO}
\lim_{m\to\infty}\sup_{n>0}\|Q_{n+m}AP_n\| =
\lim_{m\to\infty}\sup_{n>0}\|P_n AQ_{n+m}\| = 0.
\end{equation}
\end{defn}
As in \cite{MaSaSe} one can verify that the set $\Qc_p$ of all quasi-banded operators 
is a Banach algebra, $\Ac_p\subset\Qc_p\subset\pb{\Xb}$ and the inclusions are proper 
in general. In particular, the flip operator is quasi-banded, and so are also e.g. 
Laurent operators with quasi-continuous or slowly oscillating symbols.
Moreover, $\Qc_p$ is inverse closed and closed under passing to $\Pc$-regularizers. 

In the case $N=1$, a shift invariant operator $A$ belongs to $\Qc_p$ if and only if 
\begin{equation}\label{EWBDOChar}
PAQ \text{ and }QAP\text{ are $\Pc$-compact}.
\end{equation}
Moreover, Theorem \ref{TIndex} and, in particular, the index formula \eqref{EIndExt} 
also hold for operators in $\Qc_p$.

The most striking argument for the consideration of these operators is the observation 
that the well known an intensively studied results on the applicability of the finite
section method in terms of limit operators extend from $\Ac_p$ to $\Qc_p$. This is 
subject of \cite{MaSaSe} for the case $l^p(\Zb,L^p(0,1))$, but the arguments there 
also work for more general situations $l^p(\Zb,X)$, and suggest analogues for 
the spaces $l^p(\Zb^N,X)$.

\section{Conclusions}

Within this paper we have presented an overview of the recent state of the art in
Fredholm theory for band-dominated and related operators which is based on the
beautiful and most complete $\Pc$-approach. It turned out that replacing the 
classical functional analytic approach and the notions of compactness, Fredholmness 
and strong convergence by the respective $\Pc$-triple provides a Banach algebraic
framework which perfectly fits to the problems one is interested in. It permits to 
treat the whole scale
of spaces $l^p(\Zb^N,X)$ (and even more) in a completely homogeneous way. In fact,
there is no need for reflexivity, for a predual setting \eqref{CPrae}, for the 
hyperplane property of $X$ \eqref{CHyper}, or for any other restrictions on $X$, e.g. on
its dimension. We have particularly answered the Open Problems No. 4 and 5 of \cite{LiChW}.
Moreover, the conjectures No. 7 (Sufficiency of Favards condition for $N>1$) and 3 
(inverse closedness of $S(\Xb)$ turned out to be wrong. We have also seen that the 
important partial answers to the big question (Problem No. 8) actually hold for all 
rich operators in the framework $\pb{l^p}$, $p\in\{0,1,\infty\}$, and are not specific 
for band-dominated operators. 

Concerning the Open Problems No. 1 and 2 we could make the following observations: 
The passage from the classical approach to $\pb{\Xb}$ turned out to be a great step.
The algebraic and topological structures and relations remained essentially the same, 
this $\Pc$-framework is consistent and self-contained, and one could achieve a much
higher flexibility and generality. Also the proofs became more transparent. The 
possible next step towards $S(\Xb)$ and $s$-convergence is less promising since
several important basic results which are necessary for a Fredholm theory and the
limit operator method cannot be extended in general, and will not provide a 
comparable toolbox.



\begin{thebibliography}{}
  \bibitem{Analysis}
    A. Böttcher, B. Silbermann,
    \emph{Analysis of Toeplitz Operators},
		Second edition,
		Springer-Verlag, Berlin, 2006.
  \bibitem{LiChWFavard}
    S. N. Chandler-Wilde, M. Lindner,
    Sufficiency of Favard's condition for a class of band-dominated operators on the axis,
    \emph{J. Funct. Anal.} \textbf{254} (2008), 1146-1159.
  \bibitem{LiChW}
    S. N. Chandler-Wilde, M. Lindner,
    \emph{Limit Operators, Collective Compactness and the Spectral Theory of Infinite Matrices},
    Mem. Amer. Math. Soc. \textbf{210} (2011), No. 989.
	\bibitem{GoKru}
	  I. C. Gohberg, N. Krupnik,
		\emph{One-dimensional Linear Singular Integral Equations},
		Birkhäuser Verlag, Basel, Boston, Berlin, 1992.
 \bibitem{HaRoSi}
    R. Hagen, S. Roch, B. Silbermann,
    \emph{Spectral Theory of Approximation Methods for Convolution Equations},
    Operator Theory: Advances and Appl. \textbf{74}, Birkhäuser Verlag, Basel, Boston, Berlin 1995.
  \bibitem{LangeR}
    B. V. Lange, V. S. Rabinovich,
    On the Noether property of multidimensional discrete convolutions,
    \emph{Mat. Zametki} \textbf{37} (1985), No. 3, 407-421.
  \bibitem{MarkoWiener}
    M. Lindner,
    Fredholmness and index of operators in the Wiener algebra are independent of the
    underlying space,
    \emph{J. Oper. Matrices} \textbf{2} (2008), No. 2, 297-306.
  \bibitem{Marko}
    M. Lindner,
    \emph{Infinite Matrices and their Finite Sections},
   Birkhäuser Verlag, Basel, Boston, Berlin, 2006.
  \bibitem{LiEssBdd}
    M. Lindner,
    The Finite Section Method in the Space of Essentially Bounded Functions: An Approach Using Limit Operators,
    \emph{Num. Funct. Anal. Optim.} \textbf{24} (2003), No. 7-8, 863-893.
  \bibitem{MaSaSe}
    H. Mascarenhas, P. A. Santos, M. Seidel
    Quasi-banded operators, convolutions with almost periodic or quasi-continuous
		data, and their approximations,
		\emph{in preparation.}
  \bibitem{Pietsch}
    A. Pietsch,
    \emph{Operator Ideals},
    Verlag der Wissenschaften, Berlin, 1978.
		\bibitem{RaConv}
		V. S. Rabinovich,
		Operator-valued discrete convolutions and some of their applications,
		\emph{ Mat. Zametki} \textbf{51} (1992), No. 5, 90-101.
  \bibitem{LpInd}
    V. S. Rabinovich, S. Roch,
    The Fredholm index of locally compact band-dominated operators on $L^p(\Rb)$,
    \emph{Integral Equations Oper. Theory} \textbf{57} (2007), 263-281.
  \bibitem{RochFredInd}
    V. S. Rabinovich, S. Roch, J. Roe,
    Fredholm indices of band-dominated operators,
    \emph{Integral Equations Oper. Theory} \textbf{49} (2004), No. 2, 221-238.
  \bibitem{RaRoSiL2}
    V. S. Rabinovich, S. Roch, B. Silbermann,
    Band-dominated operators with operator-valued coefficients, their Fredholm
    properties and finite sections,
    \emph{Integral Equations Oper. Theory} \textbf{40} (2001), No. 3, 342-381.
  \bibitem{RochFredTh}
    V. S. Rabinovich, S. Roch, B. Silbermann,
    Fredholm theory and finite section method for band-dominated operators,
    \emph{Integral Equations Oper. Theory} \textbf{30} (1998), No. 4, 452-495.
  \bibitem{LimOps}
    V. S. Rabinovich, S. Roch, B. Silbermann,
    \emph{Limit Operators and Their Applications in Operator Theory},
    Birkhäuser Verlag, Basel, Boston, Berlin, 2004.
  \bibitem{Rochlp}
    S. Roch,
    Band-dominated operators on $l^p$-spaces: Fredholm indices and finite sections,
    \emph{Acta Sci. Math. (Szeged)} \textbf{70} (2004), 3/4, 783-797.
  \bibitem{NonStrongly}
    S. Roch, B. Silbermann,
    Non-strongly converging approximation methods,
    \emph{Demonstr. Math.} \textbf{22} (1989), 3, 651-676.
  \bibitem{SeSi2}
    M. Seidel, B. Silbermann,
    Banach algebras of structured matrix sequences,
		\emph{Lin. Alg. Appl.} \textbf{430} (2009), 1243-1281.
  \bibitem{SeSi3}
    M. Seidel, B. Silbermann,
    Banach algebras of operator sequences,
    \emph{Oper. Matrices}  \textbf{6} (2012), No. 3, 385-432.    
  \bibitem{Sim}
    I. B. Simonenko,
    On multidimensional discrete convolutions,
    \emph{Mat. Issled.} \textbf{3} (1968), 1, 108-127 (Russian).
\end{thebibliography}
\end{document}